\documentclass{amsart}

\usepackage{amsfonts, amssymb, amsmath, eucal, verbatim, amsthm, amscd, enumerate, mathtools}

\usepackage{graphicx}
\usepackage{framed}
\usepackage{tikz}
\usepackage{enumitem}
\usepackage{bbm}

\usepackage{ascmac}


\DeclareMathOperator*{\esssup}{ess\,sup}

\DeclareGraphicsRule{.ai}{pdf}{.ai}{}
\newtheorem{theorem}{Theorem}[section]
\newtheorem{lemma}[theorem]{Lemma}

\newtheorem{conjecture}[theorem]{Conjecture}

\newtheorem{corollary}[theorem]{Corollary}
\theoremstyle{definition}

\newtheorem{claim}[theorem]{Claim}

\theoremstyle{remark}
\newtheorem*{remark}{Remark}

\newcommand{\vertiii}[1]{{\left\vert\kern-0.25ex\left\vert\kern-0.25ex\left\vert #1 
\right\vert\kern-0.25ex\right\vert\kern-0.25ex\right\vert}}
\newcommand{\R}{{\mathbb R}}

\parindent0pt
\addtolength{\baselineskip}{0.5\baselineskip}
\addtolength{\parskip}{12pt}

\numberwithin{equation}{section}

\usepackage{setspace}

\def\1{\textbf{\rm 1}}

\def\XXint#1#2#3{{\setbox0=\hbox{$#1{#2#3}{\int}$}
\vcenter{\hbox{$#2#3$}}\kern-.5\wd0}}

\makeatletter
\@namedef{subjclassname@2020}{\textup{2020} Mathematics Subject Classification}
\makeatother

\begin{document}

\date{\today}
\keywords{Blaschke--Santal\'{o} inequality, Borell's reverse hypercontractivity, Brascamp--Lieb inequality, Heat flow monotonicity, Sharp $L^p$-$L^q$ bound of the Laplace transform}

\subjclass[2020]{{39B62, 52A40 (primary); 52A38, 80A19 (secondary)}}

\author[Nakamura]{Shohei Nakamura}
\address[Shohei Nakamura]{Department of Mathematics, Graduate School of Science, Osaka University, Toyonaka, Osaka 560-0043, Japan}
\email{srmkn@math.sci.osaka-u.ac.jp}
\author[Tsuji]{Hiroshi Tsuji}
\address[Hiroshi Tsuji]{Department of Mathematics, Graduate School of Science, Osaka University, Toyonaka, Osaka 560-0043, Japan}
\email{tsuji@cr.math.sci.osaka-u.ac.jp}

\title[The functional  volume product under heat flow]{The functional  volume product under heat flow}

\begin{abstract}
We prove that the functional volume product for even functions is monotone increasing along the Fokker--Planck heat flow. 
This in particular yields a new proof of the functional Blaschke--Santal\'{o} inequality by K. Ball and also Artstein-Avidan--Klartag--Milman in the even case. 


This result is the consequence of a new understanding of the  regularizing property of the Ornstein--Uhlenbeck semigroup. 
That is, we establish an improvement of Borell's reverse hypercontractivity inequality for even functions and identify the sharp range of the admissible exponents. 
As another consequence of successfully identifying the sharp range for the inequality, we derive the sharp  $L^p$-$L^q$ inequality for the Laplace transform for even functions. The best constant of the inequality is attained by centered Gaussians, and thus this provides an analogous result to Beckner's sharp Hausdorff--Young inequality. 

Our technical novelty in the proof is the use of the Brascamp--Lieb inequality for log-concave measures and Cram\'{e}r--Rao's inequality in this context. 

\end{abstract}

\maketitle

\section{Introduction}
The celebrated Blaschke--Santal\'{o} inequality states that the volume product for a symmetric convex body is maximized by the Euclidean ball, or more generally ellipsoids. This inequality was upgraded to its functional form by K. Ball \cite{BallPhd} and  Artstein-Avidan--Klartag--Milman \cite{AKM},  and they proved that the functional volume product for a certain symmetric function is maximized by centered Gaussians. 
Given this fact, it is natural to expect some monotonicity property for the functional volume product under heat flow. 
We confirm this  phenomenon as a consequence of an investigation of the regularizing property of the Ornstein--Uhlenbeck semigroup. 
In more precise terms, we consider Borell's $L^p$-smoothing estimate (reverse hypercontractivity), and provide an improvement of it for even functions in terms of its critical  exponents (``Nelson's time condition"). 
We identify the sharp range of the admissible exponents for the improved reverse hypercontractivity, and thus provide a positive answer to the problem that emerged in our previous work \cite{NT}. 
Successfully identifying the sharp range for the inequality results yet another consequence to the Laplace transform. 
That is, we derive the sharp $L^p$-$L^q$ inequality for the Laplace transform for even functions. The best constant of the inequality is attained by centered Gaussians. 
Thus, this inequality may be seen as the analogue to Beckner's sharp Hausdorff--Young inequality \cite{Beck}, and confirms the ``detropicalised" version of the Blaschke--Santal\'{o} inequality that has been suggested by Tao in his blog post \cite{TaoBlog}. 

A fundamental bridge connecting these topics is our key identity  
$$
v(f)= \lim_{s\to0} c_s \big( \int_{\mathbb{R}^n} f\, dx \big)^{-\frac{q_s}{p_s}}\big\| P_s\big[ \big( \frac{f}{\gamma} \big)^\frac1{p_s} \big]\big\|_{L^{q_s}(\gamma)}^{q_s}. 
$$
Here, $v(f)$ denotes the functional volume product and $P_s$ is the Ornstein--Uhlenbeck semigroup. Also $c_s>0$ is some explicit constant and $p_s = 2s + O(s^2)$, $q_s = -2s + O(s^2)$; we give precise definitions below. 
This identity was implicitly observed in our previous work \cite{NT} and motivated by the vanishing viscosity argument  due to Bobkov--Gentil--Ledoux \cite{BGLJMPA}. 

\subsection{Heat flow monotonicity of the functional volume product}
The volume product of a symmetric convex body $K \subset \mathbb{R}^n$ is defined as $v(K) := |K||K^\circ|$ where $|\cdot|$ denotes the Euclidean volume, $K^\circ:=\{ x\in \mathbb{R}^n : \sup_{y\in K} \langle x, y\rangle \le 1\}$ is the polar body of $K$, and $\langle \cdot,\cdot\rangle$ is the natural inner product on $\mathbb{R}^n$. 
The volume product of a convex body plays a fundamental role in convex geometry and its theory is rich as it has links to numerous areas of mathematical sciences including algebraic topology, geometric analysis, geometry of numbers, harmonic analysis, probability and information theory, and systolic and symplectic geometry. 
We refer to the recent survey article by Fradelizi--Meyer--Zvavitch \cite{FMZSurvey} for historical background and recent developments. 
An innocent but far reaching question is what is the maximum and minimum of the volume product?  
The classical Blaschke--Santal\'{o} inequality gives an answer to the maximum and states that $v(K)\le v(\mathbf{B}^n_2)$ for all symmetric convex bodies $K$, where $\mathbf{B}^n_p:=\{x\in \mathbb{R}^n: (\sum_{i=1}^n |x_i|^p)^{1/p} \le 1\}$ denotes the unit $\ell^p$-ball for $p\in [1,\infty]$. 
This inequality was proved by Blaschke \cite{Blaschke} for $n=2,3$ and Santal\'{o} \cite{Santalo} for $n\ge4$. We refer to \cite{BK,MeyPaBook,MeyPa,MR19,Saint} for several alternative proofs. 
On the other hand, to identify the minimum of the volume product among symmetric convex bodies, known as Mahler's conjecture, is still an open problem, and has been for almost a century. Mahler expected that the minimum is attained by the Euclidean cube $\mathbf{B}^n_\infty$ and confirmed it when $n=2$ \cite{Mar1,Mar2}. 
A recent breakthrough was brought by Iriyeh--Shibata \cite{IriShi} which proved Mahler's conjecture when $n=3$, and their proof was significantly simplified by Fradelizi et al. \cite{FHMRZ}. The problem for $n\ge4$ is open despite several partial answers; see the survey article \cite{FMZSurvey}. 

It was observed by Ball \cite{BallPhd} that several geometrical inequalities and problems regarding the volume of convex bodies may be formulated in terms of log-concave functions, and that this functional upgrading sheds new light on the original  geometrical problems.  
Following this idea, the Blaschke--Santal\'{o} inequality was also extended to its functional form by Ball \cite{BallPhd} and Artstein-Avidan--Klartag--Milman \cite{AKM}, see also Fradelizi--Meyer \cite{FraMeyMathZ} and Lehec \cite{LehecDirect,LehecYaoYao} for further generalizations as well as alternative proofs. 
For a nonnegative function $f$ on $\mathbb{R}^n$, its polar function, denoted by $f^\circ$, is defined as 
$$
f^\circ(x) := \inf_{y\in\mathbb{R}^n} \frac{e^{-\langle x,y\rangle}}{f(y)},\;\;\; x\in \mathbb{R}^n. 
$$
We often identify $f = e^{-\phi}$ for some $\phi: \mathbb{R}^n\to \mathbb{R}\cup\{+\infty\}$ and say that $f$ is log-concave if $\phi$ is convex on $\{ \phi < +\infty \}$. 
 In this terminology, $f^\circ(x) = e^{-\phi^*(x)}$ holds true where $\phi^*(x):= \sup_{y\in \mathbb{R}^n} [\langle x,y\rangle - \phi(y)]$ is the Legendre transform of $\phi$.
The functional volume product for $f$ is defined as 
$$
v(f):= \int_{\mathbb{R}^n} f\, dx \int_{\mathbb{R}^n} f^\circ\, dx. 
$$
For a symmetric convex body $K \subset \mathbb{R}^n$, the Minkowski functional $\|x\|_K:= \inf\{r>0: x\in rK\}$, $x\in\mathbb{R}^n$,  becomes a norm on $\mathbb{R}^n$ and satisfies 
\begin{equation}\label{e:Func->Geo}
\int_{\mathbb{R}^n} e^{-\frac12 \|x\|_K^2}\, dx = \frac{(2\pi)^\frac{n}2}{|\mathbf{B}^n_2|}|K|,
\;\;\;
\big( \frac12 \|\cdot\|_K^2 \big)^*(x)
= 
\frac12 \|x\|_{K^\circ}^2.
\end{equation}
It is clear from these properties that the standard  Gaussian $\gamma(x):= (2\pi)^{-\frac{n}2} e^{ -\frac12 |x|^2 }$ plays the role of $\mathbf{B}^n_2$ in this functional formulation. 
More generally, for a positive definite matrix $A$, we denote the centered Gaussian with covariance matrix $A$ by $\gamma_A(x):= {\rm det}\, (2\pi A)^{-\frac12} e^{ -\frac12 \langle x,A^{-1}x\rangle }$. 
Then the functional Blaschke--Santal\'{o} inequality states the following. 
\begin{theorem}[Ball \cite{BallPhd}, Artstein-Avidan--Klartag--Milman \cite{AKM}]\label{t:FBS}
For all even functions $f\colon \mathbb{R}^n \to \R_+$ with $0<\int_{\mathbb{R}^n}f\, dx <+\infty$, 
 \begin{equation}\label{e:FBS}
    v(f)\le v(\gamma) = (2\pi)^n. 
 \end{equation}
 The case of equality in \eqref{e:FBS} appears if and only if $f = c \gamma_A$ for some positive definite matrix $A$ and $c>0$. 
\end{theorem}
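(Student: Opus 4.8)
The plan is to derive Theorem~\ref{t:FBS} from the key identity displayed above together with a sharp, even-function refinement of Borell's reverse hypercontractivity. By standard reductions --- passing to the log-concave envelope $f^{\circ\circ}$, which satisfies $f\le f^{\circ\circ}$ and $(f^{\circ\circ})^\circ=f^\circ$, hence $v(f)\le v(f^{\circ\circ})$ whenever the latter is finite (the degenerate cases being handled directly), mollifying and truncating, and using scale-invariance $v(cf)=v(f)$ --- it suffices to prove the bound for $f$ even, log-concave, smooth and rapidly decaying, normalised so that $\int_{\mathbb{R}^n}f\,dx=1$. For such $f$ the claim becomes a statement about the Ornstein--Uhlenbeck semigroup.

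The heart of the argument is the reverse hypercontractivity bound $\|P_s h\|_{L^{q_s}(\gamma)}\ge\|h\|_{L^{p_s}(\gamma)}$ for even positive $h$, at the exponents $p_s=2s+O(s^2)\in(0,1)$ and $q_s=-2s+O(s^2)<0$ appearing in the identity. These exponents lie \emph{strictly beyond} the classical Nelson time --- the critical relation is $e^{-2s}=\tfrac{1-p_s}{1-q_s}$, whereas here $e^{-2s}=1-2s+\cdots>1-4s+\cdots=\tfrac{1-p_s}{1-q_s}$ for small $s>0$ --- so Borell's inequality in its usual range is unavailable and evenness must be used essentially. I would prove the even-function bound by a heat-flow monotonicity argument: fixing the target exponents, move one parameter along a path, differentiate the ratio $\|P_s h\|_{L^{q}(\gamma)}/\|h\|_{L^{p}(\gamma)}$, and show the derivative keeps a sign. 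The derivative is an integral functional of the evolved function whose sign would be obtained from the Brascamp--Lieb inequality for log-concave measures and the Cram\'{e}r--Rao inequality applied to a suitable log-concave probability measure built from $P_s h$ and $\gamma$, with evenness serving to cancel the first-moment (barycentre) term that obstructs the non-even case. Pinning down the \emph{sharp} range of admissible $(s,p,q)$ for even $h$ --- the problem left open in \cite{NT} --- is the main obstacle: the Brascamp--Lieb/Cram\'{e}r--Rao estimate must be carried out with the optimal constant, and its equality case tracked, since that is what produces rigidity.

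Granting this even-function bound at $(s,p_s,q_s)$, the proof of \eqref{e:FBS} is short: taking $h=(f/\gamma)^{1/p_s}$ gives $\|P_s[(f/\gamma)^{1/p_s}]\|_{L^{q_s}(\gamma)}\ge\|(f/\gamma)^{1/p_s}\|_{L^{p_s}(\gamma)}=(\int_{\mathbb{R}^n}f\,dx)^{1/p_s}$, and since $q_s<0$ raising to the power $q_s$ reverses the inequality, so that feeding the result into the key identity,
$$
v(f)=\lim_{s\to0}c_s\Big(\int_{\mathbb{R}^n}f\,dx\Big)^{-q_s/p_s}\big\|P_s\big[(f/\gamma)^{1/p_s}\big]\big\|_{L^{q_s}(\gamma)}^{q_s}\le\lim_{s\to0}c_s=v(\gamma)=(2\pi)^n,
$$
using $\lim_{s\to0}c_s=(2\pi)^n$ (consistent with $f=\gamma$, for which $\gamma^\circ=(2\pi)^n\gamma$). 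For the equality case, $v(f)=(2\pi)^n$ forces the ratio in the identity to tend to $1$ from below; combined with the rigidity of the Brascamp--Lieb/Cram\'{e}r--Rao step --- equivalently, running the Fokker--Planck flow $f_s$, along which $v(f_s)$ is non-decreasing, bounded by $(2\pi)^n$, and tends to $v(\gamma)$ as $s\to\infty$ by scale-invariance, so that $v(f)=(2\pi)^n$ forces $v(f_s)\equiv(2\pi)^n$ and hence $\tfrac{d}{ds}v(f_s)\equiv0$ --- one concludes $f=c\gamma_A$. Verifying that the rigidity survives the passage to the limit is the one remaining technical point.
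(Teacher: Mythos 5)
Your plan matches the paper's proof in its essentials: the core is the even-function reverse hypercontractivity at the endpoint exponents beyond Nelson's time, established by differentiating the relevant functional along the Fokker--Planck flow of the input and controlling the sign of the derivative with the Brascamp--Lieb (Poincar\'{e}) inequality for log-concave measures together with Cram\'{e}r--Rao, evenness serving exactly to kill the barycentre term, after which the Blaschke--Santal\'{o} bound follows by a limiting argument. The only differences are cosmetic or outside the paper's scope: the paper reaches $v(f)$ by letting $t\to\infty$ in the monotone volume product of Theorem \ref{t:MonoVP} rather than via the $s\to0$ limit of the key identity (both routes appear in the paper and rest on the same monotonicity), and the paper does not reprove the equality characterisation (it is cited from Ball and Artstein-Avidan--Klartag--Milman), so the rigidity-in-the-limit step you flag as open is likewise not addressed there.
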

By choosing $f= e^{-\frac12 \|x\|_K^2}$, \eqref{e:FBS} rederives  the classical Blaschke--Santal\'{o} inequality  since we have  $v( e^{-\frac12 \|\cdot\|_K^2} ) = {(2\pi)^n}{|\mathbf{B}^n_2|^{-2}} v(K)$ from \eqref{e:Func->Geo}. 
We note that the evenness assumption was weakened to the condition that the barycenter of $f$ is 0 in \cite{AKM,LehecDirect,LehecYaoYao}. 

Given the inequality \eqref{e:FBS}, a natural question emerges: is the functional volume product monotone increasing along some heat flow? We give a positive answer to the question in this paper. Regarding this purpose, there are at least two reasons to expect such a monotonicity statement. The first is of course about the extremizers of \eqref{e:FBS} which are the centered Gaussians. Perhaps the most famous example of this type of inequality is the fact that the Shannon entropy is maximized by the standard Gaussian among isotropic random variables. This is a consequence of Boltzmann's H-theorem, which states that the Shannon entropy is monotone increasing along heat flow.  
We mention works by Artstein-Avidan et al. \cite{AKSW}, Fathi \cite{Fathi}, and the second author \cite{Tsuji}  for entropic interpretations of the functional Blaschke--Santal\'{o} inequality.
The second reason is an observation made in \cite{AKM} that the functional volume product is monotone increasing under the application of the Steiner symmetrization if the input function is even. 
Although this fact does not give a direct proof of \eqref{e:FBS}, it reduces the matter to the case $n=1$. In this regard, the heat flow monotonicity of the functional volume product was implicitly suggested in \cite{AKM}. 

It turns out that an appropriate flow for our purpose is the Fokker--Planck heat flow. For a nonnegative initial data $f_0 \in L^1(dx)$, let $f_t$ be a solution to the Fokker--Planck equation 
$$
\partial_t f_t = \mathcal{L}^* f_t := \Delta f_t + {\rm div}\, (xf_t). 
$$
It is well-known that the Fokker--Planck heat flow is the dual of the Ornstein--Uhlenbeck semigroup defined as 
\begin{equation}\label{e:DefOU}
P_tg(x)
:=
\int_{\mathbb{R}^n} e^{ -\frac{| e^{-t}x - y |^2}{2(1-e^{-2t})} } g(y)\, \frac{dy}{ ( 2\pi (1-e^{-2t}) )^\frac{n}{2} }
\;\;\; (x,t) \in \mathbb{R}^n\times [0,\infty). 
\end{equation}
Namely, the solution $f_t$ has an explicit representation as 
$$
f_t(x) = P_t^*f_0(x) = \big( e^{nt} f_0(e^t\cdot) \big) \ast \gamma_{ 1-e^{-2t}}(x) = 
\int_{\mathbb{R}^n} e^{ -\frac{| x - e^{-t}y |^2}{2(1-e^{-2t})} } \, \frac{f_0( y )dy}{ ( 2\pi (1-e^{-2t}) )^\frac{n}{2} }, 
$$
where we denote $\gamma_\beta:= \gamma_{\beta {\rm id}_{\mathbb{R}^n}}$ for $\beta>0$ and  $P_t^*$ means the dual of $P_t$ with respect to the $L^2(dx)$-inner product. 
Note that, for a nonnegative initial data $g \in L^1(d\gamma)$, $u_t = P_tg$ solves the heat equation $\partial_t u_t = \mathcal{L} u_t:= \Delta u_t  - x\cdot \nabla u_t$, $u_0 = g$. 

Our first result is to confirm that the functional volume product is monotone increasing along the Fokker--Planck heat flow.
\begin{theorem}\label{t:MonoVP}
	For any nonzero and nonnegative even function $f_0\in L^1(dx)$,  it holds that 
 $$v(f_{t_1})\le v(f_{t_2})$$ for any $0\le t_1 \le t_2$,  where  $f_t$ is the solution to $\partial_t f_t = \mathcal{L}^* f_t$ with the initial data $f_0$. 
\end{theorem}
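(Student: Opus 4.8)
The plan is to reduce the statement to an infinitesimal inequality for the polar integral and then differentiate along the flow, the positivity coming from the Brascamp--Lieb inequality. Since $(P_t^*)_{t\ge0}$ is a semigroup and the Ornstein--Uhlenbeck kernel is symmetric under $x\mapsto -x$, the class of nonnegative even $L^1$ functions is preserved, so it is enough to treat $t_1=0$; moreover integrating the Fokker--Planck equation shows that the mass $\int_{\mathbb{R}^n}f_t\,dx=\int_{\mathbb{R}^n}f_0\,dx=:M$ is conserved. Hence $v(f_t)=M\int_{\mathbb{R}^n}f_t^\circ\,dx$, and everything reduces to proving
$$
\frac{d}{dt}\int_{\mathbb{R}^n}f_t^\circ\,dx\ \ge\ 0 \qquad (t>0),
$$
the endpoint $t_1=0$ being recovered afterwards by an approximation/semicontinuity argument (for instance replacing $f_0$ by $f_0+\varepsilon\gamma$, which evolves to $f_t+\varepsilon\gamma$ since $P_t^*\gamma=\gamma$, and letting $\varepsilon\downarrow 0$).

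To compute the derivative, write $f_t=e^{-\phi_t}$, so $f_t^\circ=e^{-\phi_t^*}$ with $\phi_t^*$ the Legendre transform (convex irrespective of convexity of $\phi_t$). For $t>0$ the function $f_t$ is smooth and strictly positive, and the Fokker--Planck equation translates into $\partial_t\phi_t=\Delta\phi_t-|\nabla\phi_t|^2-n+x\cdot\nabla\phi_t$. Differentiating $\phi_t^*(x)=\sup_y[\langle x,y\rangle-\phi_t(y)]$ and using the envelope theorem together with the first-order condition $\nabla\phi_t(\nabla\phi_t^*(x))=x$ gives $\partial_t\phi_t^*(x)=-(\partial_t\phi_t)(\nabla\phi_t^*(x))=|x|^2+n-\langle x,\nabla\phi_t^*(x)\rangle-\Delta\phi_t(\nabla\phi_t^*(x))$. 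Combining this with the a.e.\ identity $\nabla^2\phi_t^*(x)=\big(\nabla^2\phi_t(\nabla\phi_t^*(x))\big)^{-1}$ (valid by the inverse function theorem at the maximiser, even where $\phi_t$ is not convex, so that $\Delta\phi_t(\nabla\phi_t^*(x))=\operatorname{tr}\!\big((\nabla^2\phi_t^*(x))^{-1}\big)$) yields
$$
\frac{d}{dt}\int_{\mathbb{R}^n}f_t^\circ\,dx=\int_{\mathbb{R}^n}\Big[\operatorname{tr}\!\big((\nabla^2\phi_t^*(x))^{-1}\big)-|x|^2-n+\langle x,\nabla\phi_t^*(x)\rangle\Big]e^{-\phi_t^*(x)}\,dx .
$$

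An integration by parts then disposes of two of the terms: $\int_{\mathbb{R}^n}\langle x,\nabla\phi_t^*(x)\rangle e^{-\phi_t^*(x)}\,dx=-\int_{\mathbb{R}^n}\langle x,\nabla(e^{-\phi_t^*})\rangle\,dx=n\int_{\mathbb{R}^n}e^{-\phi_t^*}\,dx$, so $-n+\langle x,\nabla\phi_t^*\rangle$ integrates to zero against $e^{-\phi_t^*}$. For the remaining integral set $\mu:=\big(\int_{\mathbb{R}^n}f_t^\circ\,dx\big)^{-1}e^{-\phi_t^*}\,dx$, a log-concave probability measure which is \emph{centred}, because $f_0$ even forces $f_t$, hence $f_t^\circ$, hence $\phi_t^*$, to be even. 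Applying the Brascamp--Lieb inequality for the log-concave measure $\mu$ to each coordinate function $x_i$ and summing over $i$ gives
$$
\int_{\mathbb{R}^n}|x|^2\,d\mu=\operatorname{tr}(\operatorname{Cov}\mu)=\sum_{i=1}^n\operatorname{Var}_\mu(x_i)\ \le\ \int_{\mathbb{R}^n}\operatorname{tr}\!\big((\nabla^2\phi_t^*)^{-1}\big)\,d\mu ,
$$
whence $\frac{d}{dt}\int f_t^\circ\,dx\ge 0$ and therefore $v(f_{t_2})-v(f_{t_1})=M\int_{t_1}^{t_2}\frac{d}{dt}\big(\int f_t^\circ\,dx\big)\,dt\ge 0$; note that the centredness forced by evenness is exactly what makes the Brascamp--Lieb step close, consistently with the fact that the statement is false without a symmetry hypothesis.

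The main obstacle is not the algebra but its rigorous justification: legitimising the envelope-theorem differentiation and the interchange of $\frac{d}{dt}$ with the integral, and controlling the regularity of $\phi_t^*=(-\log f_t)^*$, which need not be $C^2$ (nor $\nabla^2\phi_t^*$ nondegenerate) when $f_t$ is far from log-concave, so that the Hessian identity and the Brascamp--Lieb inequality must be read in a suitable weak/approximate sense — or one first proves the inequality for sufficiently smooth, rapidly decaying even $f_0$ and then transfers it to general even $f_0\in L^1$, together with the passage to $t_1=0$. An alternative route, matching the viewpoint of the key identity recorded above, is to rewrite $v(f_t)$ as the $s\to 0$ limit of the reverse hypercontractivity quantities $c_s(\int f_t\,dx)^{-q_s/p_s}\big\|P_s[(f_t/\gamma)^{1/p_s}]\big\|_{L^{q_s}(\gamma)}^{q_s}$ (using $f_t/\gamma=P_t(f_0/\gamma)$) and to deduce the monotonicity, and at the boundary Theorem~\ref{t:FBS}, from the sharp improvement of Borell's reverse hypercontractivity for even functions; in that approach the Brascamp--Lieb inequality and Cram\'er--Rao's inequality enter in the proof of the improved inequality, and identifying the sharp admissible range of $(s,p_s,q_s)$ is where the real work lies.
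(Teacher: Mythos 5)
Your first (main) route is the formal $s\to0$ shadow of the paper's argument, and it contains a genuine gap at exactly the point the paper is designed to circumvent. For a general nonnegative even $f_0\in L^1(dx)$, the function $\phi_t=-\log f_t$ is smooth for $t>0$ but has no reason to be convex, and then the a.e.\ identity $\nabla^2\phi_t^*(x)=\big(\nabla^2\phi_t(\nabla\phi_t^*(x))\big)^{-1}$ that you invoke is false: since $\phi_t^*=(\phi_t^{**})^*$, at a contact point $y^*$ of the convex envelope one only has $\nabla^2\phi_t^{**}(y^*)\le\nabla^2\phi_t(y^*)$, hence $\nabla^2\phi_t^*(x)\ge\big(\nabla^2\phi_t(y^*)\big)^{-1}$, possibly strictly and possibly with $\nabla^2\phi_t(y^*)$ singular. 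Moreover $\phi_t^*$ need not be $C^2$ nor strictly convex, so Theorem \ref{t:BraLi} does not apply to $h=e^{-\phi_t^*}$ as stated, and the envelope differentiation and the interchange of $\partial_t$ with the integral are likewise unjustified — the introduction of the paper points out that the infimum in $f\mapsto f^\circ$ is precisely what blocks the naive derivative-plus-integration-by-parts scheme. The inequality you actually need, $\Delta\phi_t(y^*)\ge\operatorname{tr}\big((\nabla^2\phi_t^*(x))^{-1}\big)$, does go in the favourable direction, but proving it together with the requisite regularity of $\phi_t^*$ in this generality is the real content of the theorem and is not supplied.

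The paper resolves all of these issues at once by a single regularisation: it replaces $f_t^\circ$ by $F_t^q$ with $F_t=\mathfrak{L}[f_t^{1/p}](\cdot/p)$, $p=1-e^{-2s}$, $q=1-e^{2s}$ (see \eqref{e:F_t}), and proves monotonicity of $\int F_t^q\,dx$ for each fixed $s>0$ (Theorem \ref{t:MonoHC}). There $F_t^q$ is smooth, even and \emph{strictly} log-concave, because $\nabla^2(-\log F_t^q)(x)=-\tfrac{q}{p^2}\operatorname{cov}(h_{t,x})>0$, so Theorem \ref{t:BraLi} applies legitimately, and the matrix Cram\'er--Rao inequality $\operatorname{cov}(h_{t,x})^{-1}\le\int\nabla^2_z(-\log h_{t,x})\,h_{t,x}\,dz$ is the rigorous substitute for your Hessian-inversion step; only afterwards does one let $s\to0$, where \eqref{e:CreatPolar} turns $\int F_t^q\,dx$ into $\int f_t^\circ\,dx$ via dominated convergence and Fatou. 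Your computation is the correct heuristic, and you correctly locate the source of positivity (centredness from evenness feeding into Brascamp--Lieb) and even name the alternative route; but as written the proof is incomplete, and the $f_0+\varepsilon\gamma$ device for the endpoint $t_1=0$ does not obviously yield the required lower semicontinuity of $t\mapsto\int f_t^\circ\,dx$ at $t=0$ either — the paper needs a separate truncation argument for that case.
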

This monotonicity immediately yields the functional Blaschke--Santal\'{o} inequality \eqref{e:FBS} for even functions since $\lim_{t\to\infty} f_t = (\int_{\mathbb{R}^n} f_0\, dx) \gamma$ and in this sense we give an alternative proof of it. We emphasize that our proof is completely geometry free and has a different nature from the proofs in \cite{AKM,BallPhd,LehecDirect,LehecYaoYao}. 

\if0 
\subsection{Functional volume product jumps along heat flow evolution} 
The case of equality for the functional Blaschke--Santal\'{o} inequality appears only when the input is the centered Gaussian. Given this, it is natural to expect that the monotonicity in Theorem \ref{t:MonoVP}  would be strict as long as $f_0$ is not the centered Gaussians.   
Further, a more interesting question emerges: does the functional volume product experience a definite jump along the heat flow evolution, if $f_0$ is away from centered Gaussians? 
To describe this problem, we consider the gap of the functional volume product $\delta_{\rm BS}(f_0,f_t)$ for $f_t:= P_t^*f_0$. 
Here, in general, $\delta_{\rm BS}$ is defined as 
$$
\delta_{\rm BS}(f,g): = \log\, \frac{v(g)}{v(f)}, 
$$
for any nonnegative $f,g \in L^1(dx)$. 
Note that $\delta_{\rm BS}(f_0,f_t) \ge0$ for any even nonnegative $f_0$ thanks to Theorem \ref{t:MonoVP}. 
With this terminology, for each fixed $t>0$, the problem asks the validity of the following inequality: 
\begin{equation}\label{e:VagueVPJumpProblem}
    \delta_{\rm BS}(f_0,f_t) \ge c(t) D(f_0,\mathfrak{G})^{c_0}, 
\end{equation}
for some constants $c(t),c_0>0$, and some measurement $D(f_0,\mathfrak{G})$ that describes a ``distance" of $f_0$ from centered Gaussians $\mathfrak{G}$. 
Such type of problem has been investigated well for the Shannon entropy. 
For instance, the famous exponential decay in entropy \cite[Theorem 5.2.1]{BGL}, together with the Pinsker--Csisz\'{a}r--Kullback inequality, may be read as 
$$
{\rm Ent}\,  (f_t) - {\rm Ent}\, (f_0) \ge (1-e^{-2t}) ( {\rm Ent}\, (\gamma) - {\rm Ent}\, (f_0) )
\ge 
\frac{1-e^{-2t}}{2} \| \gamma - f_0 \|_{L^1(dx)}^2,
$$ 
for an isotropic probability distribution $f_0$, where ${\rm Ent}\, (f_0) := -\int_{\mathbb{R}^n} f_0\log\,f_0 \, dx$. 
This is clearly a quantitative improvement of Bolztmann's H-theorem. 
One may also consider the central limit process, $f_0 \mapsto f_0^{(1)}(x):= \int_{\mathbb{R}^n} f_0( \frac{x+y}{\sqrt{2}} ) f_0( \frac{x-y}{\sqrt{2}} )\, dy$, instead of the heat flow evolution. 
In such a case, the entropy also experiences a definite jump, if $f_0$ is an isotropic and log-concave function and satisfies the Poincar\'{e} inequality. This is the celebrated entropy jump inequality due to Ball--Barthe--Naor \cite{BBN} and Ball--Nguyen \cite{BallNgu}. 

In this paper, we consider a certain $L^2$-type distance\footnote{From the discussion above, it is also  reasonable to consider the validity of \eqref{e:VagueVPJumpProblem} with the entropy gap as $D$, instead of an $L^p$-type distance. This is in fact a more challenging problem. For instance, we have observed a link between the problem of \eqref{e:VagueVPJumpProblem} with the entropy gap and Bourgain's slicing conjecture. Our forthcoming paper will address this problem.} as $D(f_0,\mathfrak{G})$ in our problem \eqref{e:VagueVPJumpProblem}. 
We then confirm that the functional volume product also experiences a strict jump, under a slightly stronger condition than the log-concavity. For $\lambda>0$, we say that a $C^2$-function $h$ is $\lambda$-uniformly log-concave if $\nabla^2(-\log\, h)(x)\ge \lambda$ holds\footnote{We use a convention $A\ge \lambda$ for a symmetric matrix $A$ and $\lambda\ge0$ to denote that $A - \lambda{\rm id}_{\mathbb{R}^n}$ is positive semi-definite in this paper. } for all $x\in\mathbb{R}^n$. 
We then consider a class of functions $h$ such that $h$ and $h^\circ$ are uniformly log-concave. 
That is, for fixed parameters\footnote{Remark that $\mathcal{F}(\lambda,\lambda^\circ) = \emptyset$ if $\lambda^\circ > \lambda^{-1}$ since $\mathcal{F}(\lambda,\lambda^\circ) = \{ h\in C^2(\mathbb{R}^n,\mathbb{R}_+): \lambda \le \nabla^2(-\log\, h) \le \frac1{\lambda^\circ} \}$.} $\lambda, \lambda^\circ>0$ such that $\lambda^\circ \le \lambda^{-1}$, we introduce a class 
$$
\mathcal{F}(\lambda,\lambda^\circ) := \{ h\in C^2(\mathbb{R}^n, \mathbb{R}_+): \lambda \le \nabla^2(-\log\, h), \lambda^\circ \le \nabla^2(-\log\, h^\circ) \}. 
$$
\begin{theorem}\label{Prop:Stability30Sep}
Let $0 <\lambda^\circ \le \lambda^{-1}$ and $t>0$. Then for any even $f_0 \in \mathcal{F}(\lambda,\lambda^\circ)$, we have that 
\begin{equation}\label{e:Jump-time-t}
C(n,{\lambda \lambda^\circ}) \min\big\{ d(f_0,\gamma)^4, td(f_0,\gamma)^2 \big\} \le \delta_{\rm BS}(f_0,f_{t}) 
\end{equation}
for some $C(n,{\lambda \lambda^\circ}) >0$. Here $f_t = P_t^* f_0$,  
$$
d (f_0,\gamma)^2:=
\inf_{m,B} \int_{\mathbb{R}^n} \big| \gamma(x) - \frac{f_0(Bx)}{m} \big|^2\, d\gamma_{\sigma(\lambda\lambda^\circ)}(x), 
$$ 
and $\frac1{\sigma(\lambda \lambda^\circ)}:= (\frac{1}{\lambda\lambda^\circ})^2 -1 \ge0 $, where the infimum is taken over all $m>0$ and symmetric positive definite matrices $B$. 
\end{theorem}
Such a quantitative improvement of the monotonicity  is closely related to the stability problem for  the functional Blaschke--Santal\'{o} inequality.  
In fact, by taking a limit $t\to \infty$ in \eqref{e:Jump-time-t}, we obtain the following: 
\begin{corollary}\label{Cor:Stability30Sep}
Let $0 <\lambda^\circ \le \lambda^{-1}$. Then for any even $f_0 \in \mathcal{F}(\lambda,\lambda^\circ)$, we have that 
\begin{equation}\label{e:Stab22Dec}
C(n,{\lambda \lambda^\circ}) \inf_{m,B} \int_{\mathbb{R}^n} \big| \gamma(x) - \frac{f_0(Bx)}{m} \big|^2 \, d\gamma_{\sigma(\lambda\lambda^\circ)}(x) \le \delta_{\rm BS}(f_0,\gamma)^{\frac12}. 
\end{equation}
\end{corollary}
The point here is that we could manage to obtain stability estimates with dimension-free power on the deficit $\delta_{\rm BS}(f_0,\gamma)$. 
By virtue of this point, we will confirm the conjecture by Barthe--B\"{o}r\"{o}czky--Fradelizi \cite{BBF}, regarding stability estimates for the functional Blaschke--Santal\'{o} inequality, for $f_0 \in \mathcal{F}(\lambda,\lambda^\circ)$; see the forthcoming Theorem \ref{t:Stability30Sep}. 
A use of the class of uniform log-concave functions can be found in works of Courtade--Fathi--Pananjady \cite{CFP} and  Eldan--Mikulincer \cite{ElMik}, where they considered stability estimates for the Shannon--Stam inequality\footnote{To be precise, they established stability estimates for the Shannon--Stam inequality with strong measurements like the entropy gap or the Wasserstein distance, rather than $L^p$-type distance. Because of that, by considering a Gaussian mixture, one can see the failure of their stability estimates, see \cite[Proposition 10]{CFP}. In order to exclude such a counterexample, they employed the uniform log-concavity. 
On the other hand, we use the $L^2$-type distance, and hence it might be possible to prove \eqref{e:Stab22Dec} without the uniform log-concave assumption.}. 
Regarding the dependence of $n$ in $C(n,\lambda\lambda^\circ)$ in \eqref{e:Jump-time-t} and  \eqref{e:Stab22Dec},  we emphasize  that it is essential in the following sense. 
For a simplicity, let us consider the specific case of $\lambda,\lambda^\circ$ so that $\sigma(\lambda\lambda^\circ)=1$. When $f_0$ is an isotropic probability density, it is reasonable to expect that $(B,m)$  attaining the infimum in  \eqref{e:Stab22Dec} is close to  $( {\rm id}_{\mathbb{R}^n}, 1)$. 
With this in mind, consider a slightly stronger stability estimate than \eqref{e:Stab22Dec}: 
\begin{equation}\label{e:NecStab}
C_* \int_{\mathbb{R}^n} \big| \gamma(x) - f_0(x) \big|^2 \, d\gamma(x) \le \delta_{\rm BS}(f_0,\gamma)^{\frac12}, 
\end{equation}
for some $C_*>0$ and an isotropic and even probability density $f_0$. 
By testing with an appropriate tensor input $f_0$, one may see that $C_*\le cn^{-\frac12}$ is necessary for \eqref{e:NecStab}. 

\fi 

\subsection{An improvement of Borell's reverse hypercontractivity}
Theorem \ref{t:MonoVP} is a consequence of our new approach to the study of $v(f)$ which is based on the regularizing property of the Ornstein--Uhlenbeck semigroup. 
Let us explain how this regularizing property relates to the functional volume product $v(f)$. 
One obvious difficulty to establishing the flow monotonicity of the functional volume product (Theorem \ref{t:MonoVP}) lies in the definition of the polar function, which involves the infimum. 
The standard strategy to establish the heat flow monotonicity of some functional, given by an integral, is to take a time derivative of the functional and then look for some appropriate representation of it by applying  integration by parts. However the presence of the infimum  prevents one from  computing the time derivative and  integrating by parts. 
Our new idea to overcome this difficulty is to  regard  \eqref{e:FBS} as a limiting case of some improvement of Borell's reverse hypercontractivity. 
A similar idea may be found in the work of  Brascamp--Lieb \cite{BraLi_Adv}, where they rederived the Pr\'{e}kopa--Leindler inequality as a limiting case of the sharp reverse Young convolution inequality. 
Successfully applying this idea, which originated in our previous work \cite{NT}, is the main methodological novelty of this paper. 
Precisely, this new idea is represented by the following simple observation which reveals a link between the functional volume product and  the Brascamp--Lieb multilinear inequality  \cite{Barthe1,BW,BCCT,BN,BraLi_Adv,CDP,CouLiu,Lieb}. 
\begin{lemma}\label{Prop:Obs1}
For each small $s>0$, let $q_s<0<p_s$ and a symmetric matrix $\mathcal{Q}_s$ be such that\footnote{For $p\in\mathbb{R}\setminus\{0\}$, $p' := \frac{p}{p-1}\in \mathbb{R}\setminus\{0\}$ denotes the usual H\"{o}lder conjugate.} $p_s \to 0$, $\frac{p_s}{q_s'} \to 1$ and $p_s \mathcal{Q}_s \to \frac1{2\pi}  \begin{pmatrix} 
0 & - {\rm id}_{\mathbb{R}^n} \\
 - {\rm id}_{\mathbb{R}^n} & 0
 \end{pmatrix}$
as $s\to 0$. 
Then for continuous  $f_1,f_2:\mathbb{R}^n\to\mathbb{R}_+$ that have Gaussian decay, we have that  
\begin{equation}\label{e:Obs1}
\lim_{s\downarrow0} \bigg(
\int_{\mathbb{R}^{2n}} 
e^{ -\pi \langle x,\mathcal{Q}_s x\rangle }
f_1(x_1)^{\frac1{p_s}} f_2(x_2)^{\frac1{q_s'}} \, dx \bigg)^{p_s}
= 
\sup_{x \in \mathbb{R}^n} f_1(x) f_2^\circ(x)^{-1}.
\end{equation}
In particular, for $f_1=f^\circ$ and $f_2=f$, it holds that 
\begin{equation}\label{e:Obs2}
\lim_{s\downarrow0} \bigg(
\int_{\mathbb{R}^{2n}} 
e^{ -\pi \langle x,\mathcal{Q}_s x\rangle }
f^\circ (x_1)^{\frac1{p_s}} f(x_2)^{\frac1{q_s'}} \, dx \bigg)^{p_s}
= 1. 
\end{equation}
\end{lemma}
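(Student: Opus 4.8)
The plan is to prove the master limit \eqref{e:Obs1}; the identity \eqref{e:Obs2} is then immediate by specializing $f_1 = f^\circ$, $f_2 = f$ and observing that $f_1(x)f_2^\circ(x)^{-1} = f^\circ(x)f^\circ(x)^{-1} \equiv 1$ on the common domain, so the supremum is $1$. The heart of the argument is a Laplace-type asymptotics: as $s \downarrow 0$ the exponents $1/p_s \to +\infty$ and $1/q_s' \to +\infty$ blow up, so the integrand $f_1(x_1)^{1/p_s} f_2(x_2)^{1/q_s'}$ concentrates, after taking the $p_s$-th power, onto the locus where the product is maximized, and the Gaussian-type weight $e^{-\pi\langle x,\mathcal{Q}_s x\rangle}$ — whose quadratic form degenerates to $\frac{1}{2\pi}$ times the off-diagonal coupling $\binom{\ 0\ \ -\mathrm{id}}{-\mathrm{id}\ \ 0}$ — selects precisely the diagonal-type pairing that turns $f_2$ into its polar $f_2^\circ$.

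Concretely, first I would rewrite the weight. Write $\pi\langle x,\mathcal{Q}_s x\rangle = \frac{1}{p_s}\cdot p_s \pi \langle x, \mathcal{Q}_s x\rangle$ and use the hypothesis $p_s\mathcal{Q}_s \to \frac{1}{2\pi}\binom{\ 0\ \ -\mathrm{id}}{-\mathrm{id}\ \ 0}$ to see that, to leading order, $e^{-\pi\langle x,\mathcal{Q}_s x\rangle} = \big(e^{\langle x_1,x_2\rangle + o(1)}\big)^{1/p_s}$ uniformly on compacta (with the $o(1)$ error controlled on the region where $f_1,f_2$ live, using their Gaussian decay to absorb the cross terms). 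Hence the integrand is comparable to $\big(e^{\langle x_1,x_2\rangle} f_1(x_1) f_2(x_2)^{p_s/q_s'}\big)^{1/p_s}$, and since $p_s/q_s' \to 1$ we are led to study $\big(\int_{\mathbb{R}^{2n}} (e^{\langle x_1,x_2\rangle} f_1(x_1) f_2(x_2))^{1/p_s}\,dx\big)^{p_s}$. Now for the \emph{lower bound}: fix any point $x_* \in \mathbb{R}^n$ with $f_1(x_*), f_2^\circ(x_*) > 0$ and any $y_*$ nearly attaining the infimum defining $f_2^\circ(x_*) = \inf_y e^{-\langle x_*,y\rangle}/f_2(y)$; restricting the integral to a small box around $(x_*, y_*)$ and using continuity of $f_1,f_2$ together with the elementary $L^{1/p}$-norm lower bound (shrinking the box after taking $p_s\to 0$ in the right order), one extracts $e^{\langle x_*, y_*\rangle} f_1(x_*) f_2(y_*) \approx f_1(x_*) f_2^\circ(x_*)^{-1}$, and then optimizing over $x_*$ gives ``$\ge$''. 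For the \emph{upper bound}, I would pointwise dominate: by definition of the polar, $f_2(x_2) \le e^{-\langle x_1, x_2\rangle} f_2^\circ(x_1)^{-1}$ — wait, more precisely $f_2^\circ(x_1) \le e^{-\langle x_1,x_2\rangle}/f_2(x_2)$, i.e. $e^{\langle x_1,x_2\rangle} f_2(x_2) \le f_2^\circ(x_1)^{-1}$ for every $x_1, x_2$ — so the integrand is bounded by $\big(f_1(x_1) f_2^\circ(x_1)^{-1}\big)^{1/p_s}$ times $f_2(x_2)^{1/q_s' - 1/p_s \cdot(p_s/q_s')}\cdot(\text{error})$, and after integrating out $x_2$ against the genuine Gaussian part of the weight (which is integrable for small $s>0$ once one keeps the positive-definite remainder of $\mathcal{Q}_s$) one gets $\big(\sup_{x_1} f_1(x_1) f_2^\circ(x_1)^{-1}\big)\cdot(1 + o(1))$, yielding ``$\le$''.

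The main obstacle is the bookkeeping of the degenerating quadratic form: $\mathcal{Q}_s$ converges to a matrix that is \emph{not} positive definite, so one cannot simply treat $e^{-\pi\langle x,\mathcal{Q}_s x\rangle}\,dx$ as a fixed finite measure and pass to the limit by dominated convergence. The correct way around this is to diagonalize the limiting form $\binom{\ 0\ \ -\mathrm{id}}{-\mathrm{id}\ \ 0}$ in the coordinates $u = (x_1+x_2)/\sqrt2$, $w = (x_1 - x_2)/\sqrt2$, where it becomes $\mathrm{diag}(-\mathrm{id}, \mathrm{id})$; the $w$-direction carries a \emph{bona fide} Gaussian that does the integrating-out in the upper bound, while the $u$-direction carries an inverted Gaussian $e^{+|u|^2/(2p_s)}$ whose growth is exactly cancelled by the Gaussian decay of $f_1, f_2$ — so the Gaussian-decay hypothesis on the $f_i$ is essential and must be used quantitatively here, not just for qualitative finiteness. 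Once this coordinate change is in place, the remaining steps are a standard two-sided Laplace-method estimate and I expect no further difficulty.
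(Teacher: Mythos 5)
Your proposal is correct and follows essentially the same route as the paper: the paper also rewrites the integral as the $L^{1/p_s}$-norm of $e^{-\pi\langle x,p_s\mathcal{Q}_s x\rangle}f_1(x_1)f_2(x_2)^{p_s/q_s'}$ and lets this norm tend to the supremum of $e^{\langle x_1,x_2\rangle}f_1(x_1)f_2(x_2)$, which equals $\sup_{x_1}f_1(x_1)f_2^\circ(x_1)^{-1}$ by definition of the polar. Your two-sided Laplace estimate and the discussion of the degenerate quadratic form simply supply the details that the paper's one-line limiting argument leaves implicit.
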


The proof of this lemma is a simple limiting argument: 
\begin{align*}
&\bigg(
\int_{\mathbb{R}^{2n}} 
e^{ -\pi \langle x,\mathcal{Q}_s x\rangle }
f_1(x_1)^{\frac1{p_s}} f_2(x_2)^{\frac1{q_s'}} \, dx \bigg)^{p_s}\\
&=
\big\|e^{-\pi  \langle x,p_{s}\mathcal{Q}_{s} x \rangle } f_1(x_1) f_2(x_2)^{\frac{p_{s}}{q_s'}}\big\|_{L^{\frac1{p_s}}(\mathbb{R}^{2n}, dx)}
\to 
\sup_{x_1 \in \mathbb{R}^n} f_1(x_1) f_2^\circ(x_1)^{-1}. 
\end{align*}
A typical example of $p_s,q_s,\mathcal{Q}_s$ satisfying the condition is 
\begin{align}\label{e:ExampleBLdata}
&p_s = 2s + O(s^2),\; q_s = -2s + O(s^2),\\
&\mathcal{Q}_s := \frac1{2\pi ( 1-e^{-2s} )} 
\begin{pmatrix} 
 (1- \frac{1-e^{-2s}}{p_s}){\rm id}_{\mathbb{R}^n} & -e^{-s} {\rm id}_{\mathbb{R}^n} \\
 -e^{-s} {\rm id}_{\mathbb{R}^n} & e^{-2s}(1 - \frac{1-e^{2s}}{q_s}) {\rm id}_{\mathbb{R}^n}
\end{pmatrix}.\nonumber
\end{align}
From Lemma \ref{Prop:Obs1} it is immediate to deduce a bound on the functional volume product from  the Brascamp--Lieb type inequality as follows. 
For each $s>0$, fix $p_s,q_s,\mathcal{Q}_s$ satisfying the assumptions in Lemma \ref{Prop:Obs1}. 
Let ${\rm BL}_s^{(e)}\ge0$ be the largest constant for which the inequality 
\begin{equation}\label{e:HC-IBL}
\int_{\mathbb{R}^{2n}} e^{ -\pi \langle x,\mathcal{Q}_s x\rangle }\prod_{i=1,2} f_i(x_i)^{c_i(s)} \, dx 
\ge {\rm BL}_s^{(e)} 
\prod_{i=1,2} \big( \int_{\mathbb{R}^n} f_i\, dx_i \big)^{c_i(s)} 
\end{equation}
holds for all nonnegative \textit{even} functions $f_i \in L^1(dx)$, where $c_1(s):= \frac{1}{p_s}$ and $c_2(s):=\frac1{q_s'}$. 
Then \eqref{e:Obs2} yields that 
\begin{equation}\label{e:BL-VP}
\liminf_{s\to0} \big({\rm BL}_s^{(e)}\big)^{-p_s} \ge  v(f)
\end{equation}
for all nonnegative even functions $f$,  and hence the functional Blaschke--Santal\'{o} inequality \eqref{e:FBS} would follow if one could identify ${\rm BL}_s^{(e)}$ for each $s>0$, and prove that $\liminf_{s\to0} \big( {\rm BL}_s^{(e)}\big)^{-p_s}=(2\pi)^n$.
Given these observations, it is natural to study the above Brascamp--Lieb type inequality. 
In particular, as Lieb's fundamental theorem \cite{Lieb} suggests, one may  expect that ${\rm BL}_s^{(e)}$ is exhausted by centered Gaussians. In fact, \eqref{e:HC-IBL} may be regarded as an example of the inverse Brascamp--Lieb inequality which is systematically investigated\footnote{We emphasize that Barthe--Wolff considered all nonnegative inputs $f_i$ rather than even functions.} by Barthe--Wolff \cite{BW}.  We also refer  to the related works \cite{BN,CDP,CouLiu}.  
However, the crucial point here is that the Brascamp--Lieb data for  \eqref{e:HC-IBL} does not satisfy Barthe--Wolff's non-degeneracy condition and hence one cannot appeal to the general theory in \cite{BW}. 
This point strongly motivates us to go beyond Barthe--Wolff's non-degeneracy  condition by assuming the evenness on the inputs. 
We refer the reader to Section \ref{SecRemark} as well as our previous work \cite{NT} for more detailed discussion on this problem. In the following, we focus on a specific choice of $(p_s,q_s, \mathcal{Q}_s)$, namely \eqref{e:ExampleBLdata}. 
\begin{remark}
    After we uploaded the first version of this paper, Cordero-Erausquin pointed out to us about the discussion of Klartag and Tao in Tao's blog post \cite{TaoBlog}. 
    In there, a Laplace transform formulation of Blaschke--Santal\'{o} inequality and Mahler's conjecture (``detropicalised" version) have been proposed. 
    Tao gave a limiting argument (``tropical limit") that connects the Laplace transform and the functional volume product. 
    This limiting argument coincides with Lemma \ref{Prop:Obs1} with a choice $p_s=1-e^{-2s}$ and $q_s = 1-e^{2s}$, see also forthcoming Corollary \ref{Cor:Laplace}. 
\end{remark}

It is worth emphasizing that there is a lot of candidates for $p_s,q_s,\mathcal{Q}_s$ satisfying the conditions of Lemma \ref{Prop:Obs1} other than the specific form of \eqref{e:ExampleBLdata}. 
However, as we will explain below, we are guided to the above specific form of $p_s,q_s,\mathcal{Q}_s$ by the nature of the Ornstein--Uhlenbeck semigroup, and that we identify \eqref{e:ExampleBLdata} is a key to our proof of Theorem \ref{t:MonoVP}. 
A benefit of the choice \eqref{e:ExampleBLdata} is represented by the following identity; see \cite{BW,NT} for instance. For a given $f_0$, we have that 
$$
C_s\int_{\mathbb{R}^{2n}} e^{ -\pi \langle x,\mathcal{Q}_s x\rangle } f_1(x_1)^{\frac1{p_s}} f_2(x_2)^{\frac1{q_s'}} \, dx 
= 
\big\| P_s \big[ \big(\frac{f_0}{\gamma}\big)^{\frac1{p_s}} \big] \big\|_{L^{q_s}(\gamma)}, 
$$
where $f_1:= f_0$ and 
\begin{equation}\label{e:TransHC-IBL}
 f_2:= \big\| P_s \big[ \big(\frac{f_0}{\gamma}\big)^{\frac1{p_s}} \big] \big\|_{L^{q_s}(\gamma)}^{-q_s} P_s \big[ \big(\frac{f_0}{\gamma}\big)^{\frac1{p_s}} \big]^{q_s} \gamma,
\;\;\; C_s:= \big( \frac{ (2\pi)^{ \frac12( \frac{1}{p_s} + \frac{1}{q_s'}) -1 } }{ \sqrt{1-e^{-2s}} } \big)^n. 
\end{equation}
By virtue of this identity and a duality argument, \eqref{e:HC-IBL} is equivalent to the inequality 
\begin{equation}\label{e:RevHC_0}
    \big\| P_s \big[ \big(\frac{f_0}{\gamma}\big)^{\frac1{p}} \big] \big\|_{L^{q}(\gamma)}
    \ge 
    {\rm H}_{s,p,q}^{(e)} \big( \int_{\mathbb{R}^n} \frac{f_0}{\gamma}\, d\gamma \big)^\frac1{p}
\end{equation}
for all nonnegative even functions $f_0 \in L^1(dx)$ with $p=p_s$, $q=q_s$ and ${\rm H}_{s,p,q}^{(e)} = C_s {\rm BL}_s^{(e)}$. 
This inequality is reminiscent of Borell's reverse hypercontractivity by identifying $g = \frac{f_0}{\gamma}$. 
\begin{theorem}[Borell \cite{Borell}]\label{t:Borell}
	Let $s>0$ and $q<0<p <1$ satisfy $q\ge q(s,p) := 1 + e^{2s}(p-1)$. Then it holds that 
	\begin{equation}\label{e:RevHC}
	\big\| P_s \big[ g^\frac1{p} \big] \big\|_{L^{q}(\gamma)} 
	\ge 
	\big( \int_{\mathbb{R}^n} g\, d\gamma \big)^\frac1{p}
	\end{equation}
	for all nonnegative $g\in L^1(\gamma)$. 
	Moreover, $q(s,p)$ is the sharp threshold in the sense that 
	\begin{equation}\label{e:NecNelson}
	q < q(s,p)\; \Rightarrow\; \inf_{\beta>0,a\in \mathbb{R}^n} \big\| P_s \big[ \big( \frac{\gamma_\beta(\cdot +a)}{\gamma} \big)^\frac1{p} \big] \big\|_{L^{q}(\gamma)} 
	=
	0. 
	\end{equation}
\end{theorem}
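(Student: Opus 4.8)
The plan is to prove Borell's reverse hypercontractivity by reducing to a one-dimensional, purely pointwise statement via the semigroup property and a standard differentiation-in-time argument, and then to verify the sharp threshold by direct computation with the Gaussian family $\gamma_\beta(\cdot+a)$. First I would recall that by Jensen's inequality (since $q<0$ and $1/p$ has the opposite sign), writing $g = h^p$ with $h \ge 0$, the quantity $\|P_s[h]\|_{L^q(\gamma)}$ behaves well under the Mehler kernel; in fact the cleanest route is the classical one: fix $g$ and consider the function $t \mapsto \Lambda(t) := \log \|P_t[g^{1/p(t)}]\|_{L^{q(t)}(\gamma)}$ along a curve $(p(t),q(t))$ chosen so that the constraint $q(t) = 1 + e^{2t}(p(t)-1)$ is saturated for all $t$, with $p(0)=p$, and one checks $\Lambda'(t) \le 0$ (or $\ge 0$, depending on orientation), so that comparing $t=s$ with the degenerate endpoint where the exponents collapse yields \eqref{e:RevHC}. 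The differentiation produces, after integration by parts against $\mathcal L$, an expression of the form (const)$\times\int |\nabla u|^2 u^{q-2}\,d\gamma$ plus the derivative-of-exponents terms, and the precise sign/algebra of Nelson's time condition is exactly what makes the cross terms cancel; I would carry this out in dimension one and tensorize, or equivalently note that the Mehler semigroup tensorizes so the $n$-dimensional case follows from $n=1$ by Minkowski/Fubini.

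Alternatively — and this may be the shorter write-up — I would derive \eqref{e:RevHC} directly from the two-point / Gaussian form of reverse hypercontractivity as in Borell's original paper or Mossel--Oleszkiewicz--Sen, using the hypercontractive inequality's reverse companion: for $q < 0 < p < 1$ and $e^{-2s} \le \frac{1-p}{1-q}$ one has $\|P_s h\|_q \ge \|h\|_p$ for $h \ge 0$, and the constraint $e^{-2s} \le \frac{1-p}{1-q}$ rearranges precisely to $q \ge 1 + e^{2s}(p-1) = q(s,p)$. So the content of the first half of Theorem \ref{t:Borell} is a restatement of the known reverse hypercontractive estimate, and I would simply cite \cite{Borell} for it, perhaps including a one-paragraph proof sketch via the differentiation argument above for completeness.

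For the sharpness claim \eqref{e:NecNelson}, the plan is to compute $\|P_s[(\gamma_\beta(\cdot+a)/\gamma)^{1/p}]\|_{L^q(\gamma)}$ explicitly. Since $\gamma_\beta(\cdot+a)/\gamma$ is a Gaussian density times an exponential-linear factor, its $1/p$-th power is again of Gaussian-times-exponential type, and $P_s$ of such a function is computable in closed form via the Mehler kernel: one gets another Gaussian-times-exponential, whose $L^q(\gamma)$ norm is a finite Gaussian integral provided a certain quadratic form is positive definite, and equals $+\infty$ (or the integral degenerates to $0$ after taking the $q<0$ power) precisely when that quadratic form fails to be positive. I would track the variance parameter through the composition: starting from variance $\beta$, the map $g \mapsto g^{1/p}$ scales the inverse-variance by $1/p$, then $P_s$ transforms it by the Ornstein--Uhlenbeck action, and finally the $L^q(\gamma)$ pairing contributes $q$ copies plus the $1 - q$ Gaussian weight. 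The condition for the resulting Gaussian integral to converge is exactly a linear inequality in $e^{2s}, p, q$ that reduces to $q \ge q(s,p)$; when $q < q(s,p)$ one then exhibits, by sending $\beta \to 0$ or $\beta\to\infty$ (and optionally translating with $a$) within the still-convergent regime, a sequence making $\|P_s[\cdots]\|_{L^q(\gamma)} \to 0$, since the normalization $(\int g\,d\gamma)^{1/p}$ does not match the decay rate.

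The main obstacle I anticipate is bookkeeping rather than conceptual: correctly propagating the covariance/mean parameters through the three operations ($g\mapsto g^{1/p}$, $P_s$, $\|\cdot\|_{L^q(\gamma)}$) and isolating the exact quadratic-form positivity condition, keeping signs straight given that $p<1$ and $q<0$ flip inequalities. The translation parameter $a$ is a red herring for convergence (it only shifts means) but is needed to push the norm to zero once the variance condition is violated — or, more simply, scaling $\beta$ alone suffices and one can set $a=0$. I would present the one-dimensional computation in full since the $n$-dimensional statement follows by the tensorization of both $P_s$ and the Gaussian, and since all the functions involved are isotropic Gaussians the general case adds nothing.
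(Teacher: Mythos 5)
The paper offers no proof of this statement: Theorem \ref{t:Borell} is quoted verbatim from Borell's paper \cite{Borell} as background, so your second option (cite \cite{Borell}, optionally with a sketch) is exactly what the authors do, and your outline of the semigroup differentiation proof along the curve $q(t)=1+(p-1)e^{2t}$, combined with monotonicity of $r\mapsto\|\cdot\|_{L^r(\gamma)}$ to pass from the saturated case to $q\ge q(s,p)$, is the standard and correct route for \eqref{e:RevHC}.

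There is, however, one genuine error in your treatment of the sharpness claim \eqref{e:NecNelson}: the suggestion that ``scaling $\beta$ alone suffices and one can set $a=0$'' is false, and in fact contradicts the paper's own display \eqref{e:GaussRevHC}. For $0<p\le 1-e^{-2s}$ the interval $[1-e^{2s},\,q(s,p))$ is nonempty, and for $q$ in that interval one has $q<q(s,p)$ while \eqref{e:GaussRevHC} asserts $\inf_{\beta>0}\|P_s[(\gamma_\beta/\gamma)^{1/p}]\|_{L^q(\gamma)}>0$; indeed the entire point of Theorem \ref{t:SymmRevHC} is that centered (more generally, even) inputs satisfy a nontrivial inequality strictly beyond Nelson's time. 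So the translation parameter $a$ is not optional: the degeneration at the Nelson threshold is detected only by non-centered Gaussians (equivalently, by the log-linear perturbations $e^{\langle a,\cdot\rangle}$ in the classical necessity argument), and your Gaussian bookkeeping must carry the mean parameter through all three operations, showing that for $q<q(s,p)$ the ratio of the two sides of \eqref{e:RevHC} tends to $0$ as $|a|\to\infty$ (or as the relevant quadratic form in $a$ changes sign). With that correction the rest of your computation plan is sound; the covariance tracking you describe is precisely the calculation carried out in \cite[ (1.21)]{NT} for the centered case.
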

The necessary condition $q \ge q(s,p)$ may be written as 
$
\frac{q-1}{p-1}\le e^{2s}
$
and is often referred to Nelson's time condition. 
Since the $L^q$-norm for $q<0$ measures the positivity of a function, the inequality \eqref{e:RevHC} describes the regularizing effect\footnote{In Borell's paper \cite{Borell}, it is called as a positivity improving.} of the Ornstein--Uhlenbeck semigroup in a quantitative way. 
In particular, \eqref{e:RevHC} for smaller $q\ll0$ manifests a stronger   regularizing effect of $P_s$, and $q(s,p)$ provides a limitation of the regularization. 

Let us go back to our problem \eqref{e:RevHC_0} in which case $p=p_s = 2s + O(s^2)$ and $q=q_s = -2s +O(s^2)$. 
Since $q(s,p_s) = O(s^2) \gg q_s$, our inequality \eqref{e:RevHC_0} is beyond the Nelson's time and so one cannot directly apply Theorem \ref{t:Borell}. Moreover in view of \eqref{e:NecNelson}, one cannot expect any non-trivial inequality  \eqref{e:RevHC_0}  that holds  for \textit{all} $f_0\in L^1(dx)$, namely the symmetry of $f_0$ is essential. This feature is consistent with the functional Blaschke--Santal\'{o} inequality; one cannot expect  \eqref{e:FBS} without any symmetry on $f$. 
These observations suggest that the regularizing effect of $P_s$ may be improved if the initial data has a symmetry. Furthermore, this heuristic may be  quantified in terms of the range of $p,q$ for which \eqref{e:RevHC_0} holds with ${\rm H}_{s,p,q}^{(e)}>0$. Given these observations, it is natural to ask the following questions. What is the largest range of generic $q<0<p$ for which \eqref{e:RevHC_0} holds with ${\rm H}_{s,p,q}^{(e)}>0$ for each $s>0$? 
If ${\rm H}_{s,p,q}^{(e)}>0$ then what is the largest value of ${\rm H}_{s,p,q}^{(e)} $? 
These problems were formulated in our previous work \cite{NT}, where we obtained some partial progress as follows. 
Towards the necessary range of $p,q$, we observed in \cite[(1.21)]{NT} that for $q<0<p$,  
\begin{equation}\label{e:GaussRevHC}
\inf_{\beta>0} \big\| P_s \big[ \big(\frac{\gamma_\beta}{\gamma}\big)^\frac1{p} \big] \big\|_{L^{q}(\gamma)}>0
\;\;\;
\Leftrightarrow
\;\;\; 
1-e^{2s}\le q < 0 < p \le 1-e^{-2s}.
\end{equation}
On the other hand, towards the sufficient  direction, we proved  in \cite[Theorem 1.7]{NT} that \eqref{e:RevHC_0} holds true with ${\rm H}_{s,p,q}^{(e)} = 1$ in the partial range $0<p\le 1-e^{-2s}$ and $q \ge -p$. 
The proof in \cite{NT}  is based on a combination of Harnack's inequality and Lehec's argument \cite{LehecYaoYao} using the multiplicative Pr\'{e}kopa--Leindler inequality together with the Yao--Yao equipartition theorem.  
To our best knowledge, the partial range $p\le 1-e^{-2s}$ and $q\ge -p$ seems to be the best possible as long as one utilizes Lehec's argument. 
Our new alternative route is, as Theorem \ref{t:MonoVP} suggests, the flow monotonicity and this leads us to the following complete answer. 
\begin{theorem}\label{t:SymmRevHC}
    Let $s>0$ and $1-e^{2s}\le q < 0 < p \le 1-e^{-2s}$. Then \eqref{e:RevHC_0} holds for all nonnegative even functions  $f_0 \in L^1(dx)$ with ${\rm H}_{s,p,q}^{(e)} = 1$. Equality is established when $f_0 = \gamma$. 
    Moreover, the range $1-e^{2s}\le q < 0 < p \le 1-e^{-2s}$ is the best possible in the sense of \eqref{e:GaussRevHC}. 
\end{theorem}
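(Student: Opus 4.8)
The plan is to obtain the equality case and the sharpness of the range almost formally, and to prove the inequality \eqref{e:RevHC_0} with ${\rm H}_{s,p,q}^{(e)}=1$ by a heat-flow monotonicity argument carried out in the equivalent Brascamp--Lieb picture \eqref{e:HC-IBL}. The equality case is a one-line check: for $f_0=\gamma$ one has $g:=f_0/\gamma\equiv 1$ and $P_s[1]=1$, so both sides of \eqref{e:RevHC_0} equal $1$. The optimality of the range $1-e^{2s}\le q<0<p\le 1-e^{-2s}$ is also immediate, since the Gaussians $\gamma_\beta$ are even: if $(p,q)$ violates this range, then \eqref{e:GaussRevHC} (proved in \cite{NT}) already gives $\inf_{\beta>0}\|P_s[(\gamma_\beta/\gamma)^{1/p}]\|_{L^q(\gamma)}=0$, whence ${\rm H}_{s,p,q}^{(e)}=0$ and no positive constant survives. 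It remains to prove \eqref{e:RevHC_0} on the stated range; by the identity around \eqref{e:TransHC-IBL} together with a routine duality argument, this is equivalent to \eqref{e:HC-IBL} with ${\rm BL}_s^{(e)}=1/C_s$ for even $f_1,f_2$, and a standard mollification--truncation--monotone-convergence reduction lets us assume $f_1,f_2$ smooth, strictly positive, even, with Gaussian decay of the functions and all their derivatives.

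For the core inequality I would run the heat-flow monotonicity method (in the spirit of \cite{BCCT}): evolve each input under the Fokker--Planck flow, $f_i\rightsquigarrow f_{i,\tau}:=P_\tau^* f_i$, and co-evolve the matrix $\mathcal{Q}_s\rightsquigarrow\mathcal{Q}_{s,\tau}$ by the Riccati-type matrix ODE that keeps the quotient
\[
Q(\tau):=\Big(\int_{\mathbb{R}^{2n}}e^{-\pi\langle x,\mathcal{Q}_{s,\tau}x\rangle}\prod_{i=1,2}f_{i,\tau}(x_i)^{c_i(s)}\,dx\Big)\Big/\prod_{i=1,2}\Big(\int_{\mathbb{R}^n}f_{i,\tau}\,dx_i\Big)^{c_i(s)}
\]
stationary on centered Gaussian inputs. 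As $\tau\to\infty$ every $f_{i,\tau}$ is driven to a centered Gaussian and $Q(\tau)$ tends to the value of the Gaussian-optimised quotient, which one computes to be $1/C_s$ (this is precisely the equality case $f_0=\gamma$ again). Since $Q(0)$ is the quotient appearing in \eqref{e:HC-IBL}, it suffices to show that $Q$ decreases along the flow toward its Gaussian minimum, i.e.\ $Q'(\tau)\le 0$ for all $\tau>0$.

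Differentiating $Q$ and integrating by parts turns $Q'(\tau)\le 0$ into a pointwise/integrated inequality comparing $\mathcal{Q}_{s,\tau}$ with the second-order data of the evolved inputs: the Hessians of $-\log f_{i,\tau}$, equivalently the covariance matrices of the tilted probability densities attached to $f_{i,\tau}$. For a \emph{non-degenerate} Brascamp--Lieb datum this is the classical local Brascamp--Lieb inequality, a finite matrix computation. Our datum is \emph{degenerate} --- it fails the Barthe--Wolff non-degeneracy condition \cite{BW} --- and correspondingly the inequality is false for general inputs; in particular a naive argument via the Gaussian log-Sobolev inequality on $\gamma$ cannot work, since that inequality does not improve when restricted to even functions (as $w=\cosh(a\,x_1)$, $a\to\infty$, already shows). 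Evenness enters in two ways. First, the evolved $f_{i,\tau}$ are even, hence have vanishing barycentre, and the degenerate direction of $\mathcal{Q}_{s,\tau}$ couples only to these first moments, so the otherwise uncontrolled term drops out. Second, what remains is a second-order estimate in which the covariance matrices of the tilted densities must be pinched between two bounds: the Gaussian smoothing in the flow gives, for $\tau>0$, a Hessian upper bound $\nabla^2(-\log f_{i,\tau})\le C(\tau)\,{\rm id}$, whence Cram\'{e}r--Rao's inequality yields the lower bound $C(\tau)^{-1}{\rm id}$ on the covariances, while the Brascamp--Lieb variance inequality for log-concave measures (after the matter has been reduced, along the flow, to the regime where the pertinent measures are log-concave) supplies the matching upper bound. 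The arithmetic of sandwiching these two bounds is exactly what yields the \emph{rectangular} admissible region $1-e^{2s}\le q<0<p\le 1-e^{-2s}$ instead of a smaller hyperbolic one, the two sides of the rectangle tracing the two extreme Gaussian profiles $\gamma_\beta$ with $\beta\to 0$ and $\beta\to\infty$.

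The step I expect to be the main obstacle is the infinitesimal inequality $Q'(\tau)\le 0$: one must check that the degeneracy of the Brascamp--Lieb datum is compensated \emph{exactly} by evenness, and that the Cram\'{e}r--Rao lower bound and the Brascamp--Lieb upper bound on the covariances fit together so as to give monotonicity over the whole rectangle, with equality only in the Gaussian direction. A subsidiary but not automatic point is the approximation from nice even inputs to arbitrary even $f_0\in L^1(dx)$, arranged so that the sharp value $1$ (equivalently $1/C_s$) is preserved in the limit; in the reverse-hypercontractivity formulation \eqref{e:RevHC_0} this is the same as ensuring that the argument passes smoothly through the exponent $0$, where the $L^{p}(\gamma)$- and $L^{q}(\gamma)$-quantities collapse to geometric means.
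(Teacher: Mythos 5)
Your treatment of the equality case and of the sharpness of the range is correct and coincides with the paper's (the latter is exactly the citation of \eqref{e:GaussRevHC}). The reduction of the inequality to the bilinear form \eqref{e:HC-IBL} is also legitimate. But the core of your argument has a genuine gap, and the scheme you propose for it is not the one that is known to work. You set up a two-input heat flow $f_i\rightsquigarrow P_\tau^*f_i$ with a co-evolving quadratic form $\mathcal{Q}_{s,\tau}$ in the style of the closure arguments for non-degenerate Brascamp--Lieb data, and then you explicitly defer the infinitesimal inequality $Q'(\tau)\le 0$ as ``the main obstacle.'' That obstacle is the entire theorem: for this datum the Barthe--Wolff non-degeneracy condition fails, which is precisely the situation in which the standard Riccati co-evolution does not close, and you give no verified mechanism by which evenness rescues it. Your narrative for that mechanism is also off in its details: you propose to pinch the covariances of the \emph{evolved inputs} between a Cram\'er--Rao lower bound (obtained from a Hessian upper bound $\nabla^2(-\log f_{i,\tau})\le C(\tau)$ coming from smoothing) and a Brascamp--Lieb upper bound valid ``once the pertinent measures are log-concave.'' The flow does not make arbitrary even $L^1$ inputs log-concave, and no such sandwich is needed. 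Likewise, the rectangle $1-e^{2s}\le q<0<p\le 1-e^{-2s}$ does not emerge from the infinitesimal inequality; one only needs the endpoint $(p,q)=(1-e^{-2s},1-e^{2s})$, the rest following from H\"older.

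For contrast, the paper's route is structurally different and avoids your obstacle. It fixes $s,p,q$ at the endpoint and flows \emph{only} the single input $f_0$ by the Fokker--Planck semigroup, proving that $t\mapsto\|P_s[(f_t/\gamma)^{1/p}]\|_{L^q(\gamma)}^q$ increases (Theorem \ref{t:MonoHC}); the theorem then follows from Fatou's lemma as $t\to\infty$ since $q<0$. The monotonicity is proved by rewriting the functional via \eqref{e:EquivForm} as $\int_{\mathbb{R}^n}F_t(x)^q\,dx$ with $F_t$ a Laplace transform of $f_t^{1/p}$, computing $Q_s'(t)$ by integration by parts, and then applying the Brascamp--Lieb variance inequality \eqref{e:P-BL} to $h=F_t^q$ --- which is \emph{automatically} strictly log-concave because $F_t$ is log-convex and $q<0$ --- with the test functions $g(x)=x_i$, whose means vanish by evenness; Cram\'er--Rao is then applied to the tilted densities $h_{t,x}(z)\propto e^{\langle x,z\rangle/p}f_t(z)^{1/p}$, which need not be log-concave at all. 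So while you have correctly named the three ingredients (heat flow, evenness, Brascamp--Lieb plus Cram\'er--Rao), the objects to which they must be applied, and the flow along which the monotonicity actually holds, are different from what you describe, and the decisive computation is absent from your proposal.
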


It is crucial that we manage to identify the sharp range of the improved reverse hypercontractivity. Indeed, at the endpoint $p=1-e^{-2s}$ and $q=1-e^{2s}$, Theorem \ref{t:SymmRevHC} contains a further consequence. 
Let us define the Laplace transform $\mathfrak{L}$ by 
$$
\mathfrak{L}f(x) := \int_{\mathbb{R}^n} e^{\langle x,z\rangle} f(z)\, dz,\quad x\in\mathbb{R}^n, 
$$
for a nonnegative function $f$; if the integral does not converge we regard $\mathfrak{L}f(x)=\infty$. We also  use a convention $\infty^{-1} = 0$. 
The Laplace transform naturally appears and has been used in convex geometry. We refer to the work of Klartag--Milman \cite{KlaMil12JFA} and references therein. 
As a corollary of Theorem \ref{t:SymmRevHC}, we obtain the sharp inequality for the Laplace transform. 
\begin{corollary}\label{Cor:Laplace}
    Let $p\in (0,1)$ and $q=p'<0$. Then it holds that 
    $$
\big\| \mathfrak{L} f \big\|_{L^{q}(dx)} \ge \frac{\big\| \mathfrak{L}  \gamma  \big\|_{L^q(dx)} }{ \| \gamma\|_{L^p(dx)} } \| f\|_{L^p(dx)}, 
$$
   for any nonnegative and even $f \in L^p(dx)$. 
\end{corollary}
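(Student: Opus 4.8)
The plan is to obtain Corollary~\ref{Cor:Laplace} as the endpoint case $p = 1-e^{-2s}$, $q = 1-e^{2s}$ of Theorem~\ref{t:SymmRevHC}, after rescaling away the dependence on $s$. First I would record the elementary observation that as $s$ ranges over $(0,\infty)$, the pair $(1-e^{-2s},1-e^{2s})$ sweeps out exactly the pairs $(p,q)$ with $p\in(0,1)$ and $q = p' = p/(p-1) < 0$; indeed $1-e^{2s} = (1-e^{-2s})/((1-e^{-2s})-1)$, so the endpoint range of Theorem~\ref{t:SymmRevHC} is precisely the range asserted in the corollary. Thus it suffices to fix such a pair, set $s$ accordingly, and unwind the inequality~\eqref{e:RevHC_0} with $\mathrm{H}^{(e)}_{s,p,q}=1$.

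Next I would make the change of variables linking the Ornstein--Uhlenbeck expression to the Laplace transform. Writing $g = f_0/\gamma$ and plugging the explicit kernel~\eqref{e:DefOU} into $P_s[g^{1/p}](x)$, the Gaussian weights combine: since $p = 1-e^{-2s}$, the exponent $-|e^{-s}x-y|^2/(2(1-e^{-2s})) - |y|^2/(2p)\cdot(\text{from }\gamma^{-1/p})$ plus the Gaussian in $f_0$ collapses, after completing the square, into a pure linear term $e^{-s}\langle x,y\rangle/(1-e^{-2s})$ in $y$ times a Gaussian in $x$ alone. Concretely one expects an identity of the shape
\begin{equation*}
P_s\big[(f_0/\gamma)^{1/p}\big](x) = \kappa_{n,s}\, e^{\frac{|x|^2}{2}\,a_s}\, \big(\mathfrak{L}f_0\big)\!\big(b_s x\big)
\end{equation*}
for explicit constants $\kappa_{n,s}>0$, $a_s$, $b_s$ depending only on $n$ and $s$; here the evenness of $f_0$ is what lets us absorb the sign of the linear term. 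Taking the $L^q(\gamma)$-norm of both sides then produces, up to an explicit constant, $\|\mathfrak{L}f_0\|_{L^q(dx)}$ after a further linear change of variables in $x$ (the residual Gaussian factors $e^{a_s|x|^2/2}$ and the density of $\gamma$ recombine into a constant multiple of Lebesgue measure precisely because $q$ is the conjugate endpoint). Similarly the right-hand side $(\int g\,d\gamma)^{1/p} = (\int f_0\,dx)^{1/p} = \|f_0\|_{L^1}^{1/p}$, and I would rewrite $\|f_0\|_{L^1}^{1/p}$ in terms of $\|f\|_{L^p}$ by setting $f = f_0^{1/p}$ (equivalently $f_0 = f^p$), which is again even and lies in $L^1$ iff $f\in L^p$; note $\int f_0\,dx = \|f\|_{L^p}^p$ so the power matches. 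Carrying the constants through, Theorem~\ref{t:SymmRevHC} becomes exactly $\|\mathfrak{L}f\|_{L^q} \ge c_{n,s}\|f\|_{L^p}$ with $c_{n,s}>0$ explicit; evaluating the constant by testing $f = \gamma$ (for which equality holds in Theorem~\ref{t:SymmRevHC}) identifies $c_{n,s} = \|\mathfrak{L}\gamma\|_{L^q}/\|\gamma\|_{L^p}$ without any further computation.

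The last point to handle is the integrability bookkeeping: one must check that the change of variables and the identification of norms are valid even when $\mathfrak{L}f \equiv \infty$ on a set of positive measure or when $f\notin L^p$, using the stated conventions $\infty^{-1}=0$ and $\mathfrak{L}f(x)=\infty$ for divergent integrals; in those degenerate cases both sides behave consistently (the inequality is either trivial or reduces to the nondegenerate case by truncation and monotone convergence), so no real difficulty arises. The main obstacle I anticipate is purely computational rather than conceptual: correctly completing the square in the Gaussian exponents and tracking the constants $\kappa_{n,s}, a_s, b_s$ through two successive linear changes of variables so that the final weight is exactly a constant times Lebesgue measure --- this hinges on the precise endpoint relations $p = 1-e^{-2s}$ and $q = 1-e^{2s}$ and would fail for any other $(p,q)$, which is why identifying the sharp range in Theorem~\ref{t:SymmRevHC} is what makes the corollary possible. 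Once the algebra is in place, the evaluation of the sharp constant via $f=\gamma$ is immediate from the equality case already recorded in Theorem~\ref{t:SymmRevHC}.
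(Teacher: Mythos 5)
Your proposal is correct and follows essentially the same route as the paper, which derives the corollary from the endpoint case $p=1-e^{-2s}$, $q=1-e^{2s}$ of Theorem \ref{t:SymmRevHC} via the identity \eqref{e:EquivForm} (the explicit Gaussian square-completion you describe) together with the substitution $f=f_0^{1/p}$. The only point to tidy is the constant: equality in Theorem \ref{t:SymmRevHC} is stated at $f_0=\gamma$, i.e.\ at $f=\gamma^{1/p}$ rather than at $f=\gamma$, so to read off $c_{n,s}=\|\mathfrak{L}\gamma\|_{L^q(dx)}/\|\gamma\|_{L^p(dx)}$ you should also observe that the ratio $\|\mathfrak{L}g\|_{L^q(dx)}/\|g\|_{L^p(dx)}$ is the same for every centered Gaussian $g$, which is immediate from $q=p'$ and a linear change of variables.
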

This inequality is the analogue to Beckner's sharp Hausdorff--Young inequality. 
In particular, Corollary \ref{Cor:Laplace} confirms the ``detropicalised" version of the Blaschke--Santal\'{o} inequality that has been suggested by Tao in his blog post \cite{TaoBlog}. 
We refer the identity \eqref{e:EquivForm} in Section 2 to derive Corollary \ref{Cor:Laplace} from Theorem \ref{t:SymmRevHC}. 

Our approach based on the new viewpoint of hypercontractivity has a nature-based interpretation and clarifies internal relations of several subjects that we explained. For further links to recent works of Berndtsson--Mastrantonis--Rubinstein \cite{BMR} and Kolesnikov--Werner \cite{KW}, we refer to Section \ref{SecRemark}. 
There are other positive consequences of this viewpoint. 
For instance, we have obtained the forward type $L^p$-$L^q$ inequality for the Laplace transform: $ \big\| \mathfrak{L} f \big\|_{L^{q}(dx)} \le C \| f\|_{L^p(dx)} $ for $q<0<p$, under the uniform log-concavity assumption in \cite{NT}. This in particular yields a quantitative lower bound of the volume product for convex bodies in terms of the curvature of the boundary. 
In the forthcoming paper, we will develop our argument and improve the stability estimate for the functional Blaschke--Santal\'{o} inequality due to Barthe--B\"{o}r\"{o}czky--Fradelizi \cite{BBF} under the uniform log-concavity assumption.

In Section \ref{Sec2}, we will derive Theorems \ref{t:MonoVP} and \ref{t:SymmRevHC} from a flow monotonicity of some functional associated with  hypercontractivity, see the forthcoming Theorem \ref{t:MonoHC}. 
In Section \ref{SecRemark}, we provide discussions about the relation of our work to recent works of Berndtsson--Mastrantonis--Rubinstein \cite{BMR} as well as Kolesnikov--Werner \cite{KW}. 



\section{Proof of Theorems \ref{t:MonoVP} and \ref{t:SymmRevHC}}\label{Sec2}
We prove a stronger monotonicity statement that yields Theorems \ref{t:MonoVP} and \ref{t:SymmRevHC} at the same time. 
\begin{theorem}\label{t:MonoHC}
Let $s>0$, $p=1-e^{-2s}$ and $q=1-e^{2s}$. Then for any even function $f_0:\mathbb{R}^n\to \mathbb{R}_+$ with $0 < \int_{\R^n}f_0\, dx < +\infty$, 
$$
[0,\infty)\ni t \mapsto  \big\| P_s\big[ \big(\frac{f_t}{\gamma} \big)^\frac1p \big] \big\|_{L^q(\gamma)}^q 
$$
is monotone increasing on $t\in[0,\infty)$, where $f_t:= P_t^* f_0$. 
\end{theorem}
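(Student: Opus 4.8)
The plan is to show that $\tfrac{d}{dt}F(t)\ge 0$ for $F(t):=\int_{\R^n}(P_s\varphi_t)^q\,d\gamma$, where $\varphi_t:=(f_t/\gamma)^{1/p}$. The inputs are that $h_t:=f_t/\gamma$ equals $P_t(f_0/\gamma)$, hence solves the Ornstein--Uhlenbeck equation $\partial_t h_t=\mathcal Lh_t$, and that $h_t,\varphi_t$ and $u_t:=P_s\varphi_t$ are all even; remarkably, no log-concavity of $f_0$ will be used. By a routine (at the endpoint $p=1-e^{-2s}$, $q=1-e^{2s}$ slightly delicate, because of the borderline integrability seen in \eqref{e:GaussRevHC}) approximation I may assume $f_0$ is smooth, strictly positive, with Gaussian decay, so that every integral below is finite and the differentiations and integrations by parts are legitimate. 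From $\partial_t h_t=\mathcal Lh_t$ and $\mathcal L(\psi^p)=p\psi^{p-1}\mathcal L\psi+p(p-1)\psi^{p-2}|\nabla\psi|^2$ one obtains the nonlinear evolution $\partial_t\varphi_t=\mathcal L\varphi_t-(1-p)|\nabla\varphi_t|^2/\varphi_t$; differentiating $F$, using $P_s\mathcal L=\mathcal LP_s$, and integrating by parts via $\int u^{q-1}\mathcal Lu\,d\gamma=-(q-1)\int u^{q-2}|\nabla u|^2\,d\gamma$, one finds, after substituting $p=1-e^{-2s}$, $q=1-e^{2s}$ and writing $\sigma:=1-e^{-2s}$, that $\tfrac{d}{dt}F(t)=\sigma\,(J-e^{4s}I_1)$, with $J:=\int_{\R^n}u_t^{q-1}P_s[|\nabla\varphi_t|^2/\varphi_t]\,d\gamma$ and $I_1:=\int_{\R^n}u_t^{q-2}|\nabla u_t|^2\,d\gamma$. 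Thus it suffices to prove $J\ge e^{4s}I_1$.

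Next I disintegrate. Fix $t$, write $u=u_t$, $\varphi=\varphi_t$, and use Mehler's formula $u(x)=\int\varphi\,d\mu_x$ with $\mu_x=\mathcal N(e^{-s}x,\sigma\,\mathrm{id})$. Introduce the tilted probability measure $\nu_x:=\varphi\,\mu_x/u(x)$. A direct computation gives $P_s[|\nabla\varphi|^2/\varphi](x)=u(x)\,\mathbb E_{\nu_x}[|\nabla\log\varphi|^2]$ and, differentiating under the integral, $\nabla\log u(x)=e^{-s}\mathbb E_{\nu_x}[\nabla\log\varphi]$. Splitting $\mathbb E_{\nu_x}[|\nabla\log\varphi|^2]=|\mathbb E_{\nu_x}[\nabla\log\varphi]|^2+V(x)$, where $V(x):=\operatorname{tr}\,\operatorname{Cov}_{\nu_x}(\nabla\log\varphi)\ge 0$, and using $I_1=\int u^q|\nabla\log u|^2\,d\gamma$, the target $J\ge e^{4s}I_1$ is equivalent -- after normalising by $Z:=\int u^q\,d\gamma$ and setting $d\mu:=u^q\,d\gamma/Z$ -- to
\[
\int_{\R^n}V\,d\mu\ \ge\ e^{2s}(e^{2s}-1)\int_{\R^n}|\nabla\log u|^2\,d\mu .
\]

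The crux is to prove this last inequality by coupling Brascamp--Lieb with Cram\'er--Rao. Differentiating $\nabla\log u(x)=e^{-s}(m_x-e^{-s}x)/\sigma$ (where $m_x:=\mathbb E_{\nu_x}[Z]$ and $\nabla_x m_x=\tfrac{e^{-s}}{\sigma}C_x$, $C_x:=\operatorname{Cov}_{\nu_x}(Z)$) gives $\nabla^2\log u(x)=\tfrac{e^{-2s}}{\sigma^2}(C_x-\sigma\,\mathrm{id})$, so that $\mu=e^{-\Psi}$ with $\nabla^2\Psi=-q\,\nabla^2\log u+\mathrm{id}=C_x/\sigma\succ0$; hence $\mu$ is log-concave. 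Since $u$ is even, $\nabla\log u$ is odd and $\int\nabla\log u\,d\mu=0$; applying the Brascamp--Lieb inequality for $\mu$ to each coordinate of $\nabla\log u$ yields
\[
\int|\nabla\log u|^2\,d\mu=\sum_i\operatorname{Var}_\mu(\partial_i\log u)\ \le\ \sigma\int\operatorname{tr}\!\big((\nabla^2\log u)\,C_x^{-1}(\nabla^2\log u)\big)\,d\mu=\frac{e^{-4s}}{\sigma^3}\int\Big(\sum_i\tfrac{(\lambda_i-\sigma)^2}{\lambda_i}\Big)d\mu,
\]
where $\lambda_i=\lambda_i(C_x)$; evenness enters precisely here, for otherwise the term $|\int\nabla\log u\,d\mu|^2$ survives and cannot be controlled -- this is the degeneracy that escapes Barthe--Wolff's theory. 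For the left-hand side, write $\nabla\log\varphi=(Z-e^{-s}x)/\sigma-\nabla W_x$ with $\nu_x=e^{-W_x}$; Stein's identity $\operatorname{Cov}_{\nu_x}(Z,\nabla W_x)=\mathrm{id}$ gives $\operatorname{Cov}_{\nu_x}(\nabla\log\varphi)=\tfrac1{\sigma^2}C_x-\tfrac2\sigma\,\mathrm{id}+\mathcal I_x$ with $\mathcal I_x:=\operatorname{Cov}_{\nu_x}(\nabla W_x)$ the Fisher information matrix, and the Cram\'er--Rao bound $\mathcal I_x\ge C_x^{-1}$ (again from Stein's identity and positivity of the joint covariance of $(Z,\nabla W_x)$) yields $V(x)\ge\sum_i\big(\tfrac{\lambda_i}{\sigma^2}-\tfrac2\sigma+\tfrac1{\lambda_i}\big)=\tfrac1{\sigma^2}\sum_i\tfrac{(\lambda_i-\sigma)^2}{\lambda_i}$ pointwise. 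Since $e^{2s}(e^{2s}-1)\cdot\tfrac{e^{-4s}}{\sigma^3}=\tfrac{(e^{2s}-1)e^{-2s}}{\sigma^3}=\tfrac{\sigma}{\sigma^3}=\tfrac1{\sigma^2}$, the last two estimates combine to give exactly $\int V\,d\mu\ge e^{2s}(e^{2s}-1)\int|\nabla\log u|^2\,d\mu$, hence $\tfrac{d}{dt}F(t)\ge 0$.

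The main obstacle is that the whole chain is rigid: the drift coefficient $1-p$, the exponent $e^{4s}$ coming out of the Ornstein--Uhlenbeck commutation and the integration by parts, and the $\sigma^{\pm1}$ factors produced by Brascamp--Lieb and Cram\'er--Rao all have to cancel with nothing to spare. This is why the endpoint $(p,q)=(1-e^{-2s},1-e^{2s})$ is forced -- for any other admissible pair the computation leaves a residual term of a definite sign -- and locating this endpoint is the real content of the theorem. The two conceptual hinges are: (a) realising that the otherwise degenerate Poincar\'e/Brascamp--Lieb step is rescued exactly by the oddness of $\nabla\log u$, i.e.\ by the evenness of $f_0$; and (b) pairing a Brascamp--Lieb \emph{upper} bound (for the spread of $\nabla\log u$ under $\mu$) with a Cram\'er--Rao \emph{lower} bound (for the conditional covariance of $\nabla\log\varphi$ under $\nu_x$). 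What remains is analytic bookkeeping: justifying the $t$-differentiation and the integration by parts at the borderline exponents, the finiteness of $Z=\int u_t^q\,d\gamma$ there, and the passage from a general even $f_0\in L^1(dx)$ to the regular case.
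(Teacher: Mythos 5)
Your proposal is correct in substance and rests on the same two pillars as the paper's proof --- the Brascamp--Lieb inequality for an even, strictly log-concave measure (with evenness used exactly to kill the mean term) and the matrix Cram\'{e}r--Rao bound applied to the tilted measure $\nu_x=\varphi\,\mu_x/u(x)$, which is precisely the paper's $h_{t,x}$ after the change of variables $x\mapsto e^{-s}x$. The realization is, however, organized differently. The paper passes to the Laplace-transform picture via \eqref{e:EquivForm}, differentiates $Q_s(t)=\log\int F_t^q\,dx$ using the Fokker--Planck equation, and applies Brascamp--Lieb to the \emph{coordinate functions} $g(x)=x_i$ for the measure $F_t^q\,dx$, so that the quantity controlled is the second moment $\int |x|^2 F_t^q\,dx$; Cram\'{e}r--Rao then converts $\mathrm{cov}(h_{t,x})^{-1}$ into the expected Hessian $\int \Delta(-\log f_t)\,h_{t,x}\,dz$, which cancels exactly against the $\Delta\log f_t$ term produced by the time derivative. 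You instead stay in the Gaussian-semigroup picture, derive the nonlinear evolution of $\varphi_t$, and apply Brascamp--Lieb to the functions $\partial_i\log u$ for $\mu=u^q\gamma/Z$, pairing the resulting upper bound for $\int|\nabla\log u|^2\,d\mu$ with a Cram\'{e}r--Rao lower bound for the conditional variance $V(x)$; the two sides then meet at the common expression $\sum_i(\lambda_i-\sigma)^2/\lambda_i$ in the eigenvalues of $C_x$. Your version is arguably more transparent about \emph{why} the endpoint exponents are forced, and it makes the probabilistic structure (conditional means, covariances, Fisher information) explicit. What you have deferred --- and what occupies a nontrivial part of the paper's proof --- is the justification layer: the reduction to bounded, compactly supported $f_0$, the two-sided Gaussian bounds \eqref{e:CompSupp} that legitimize differentiation under the integral and the integrations by parts at the borderline exponents (where $\int u^q\,d\gamma$ is only barely finite, cf. \eqref{e:GaussRevHC}), and the separate treatment of the left endpoint $t_1=0$, where $f_0$ need not be continuous and one must argue via monotone convergence and Fatou rather than differentiation. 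These are not automatic, so if you write this up you should carry out that approximation argument explicitly rather than calling it routine.
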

Two remarks about this theorem are in order. 
The first is that such a flow monotonicity scheme may be found in the work of Aoki et al. \cite{ABBMMS} under the Nelson's time condition.
In fact, for $p,q$ satisfying the Nelson's time condition, the above monotonicity has been already proved in \cite{ABBMMS}. In this sense, Theorem \ref{t:MonoHC} improves the work of \cite{ABBMMS} under the evenness of the initial data. 
The second remark is about the speciality of the exponents $p,q$ in the above. Notice that the strongest, and hence the most difficult to prove, inequality in Theorem \ref{t:SymmRevHC} appears at the endpoint $(p,q)=(1-e^{-2s}, 1-e^{2s})$ as other cases follow from H\"{o}lder's inequality. 
Nevertheless, the endpoint case has a special character that makes things canonical and we will appeal to the virtue of it. 
Firstly, the inequality \eqref{e:RevHC_0} becomes linear invariant only at the endpoint $(p,q)=(1-e^{-2s}, 1-e^{2s})$. 
Secondly, we may check from \eqref{e:DefOU} that 
\begin{align*}
&C_{s,p,q}^{-q}\big\| P_s\big[ \big( \frac{h}{\gamma} \big)^\frac1p \big] \big\|_{L^q(\gamma)}^q \\
&=
\int_{\mathbb{R}^n}
\bigg(
\int_{\mathbb{R}^n} 
e^{ \frac{e^{-s}}{1-e^{-2s}}\langle x,y\rangle } h(y)^\frac1p 
e^{ -\frac12( \frac1{1-e^{-2s}} -\frac1p )|y|^2 }\, dy
\bigg)^q e^{ -\frac12( 1 - \frac{q}{1-e^{2s}} )|x|^2 }\, dx 
\end{align*}
for genuine $p,q\in\mathbb{R}\setminus\{0\}$ where $C_{s,p,q}:=\big( \frac{ (2\pi)^{ \frac12(\frac1p+\frac1{q'}) -1 } }{ \sqrt{1-e^{-2s}} } \big)^{n}$.
In particular, when $(p,q)=(1-e^{-2s},1-e^{2s})$, the above expression becomes  simpler as 
\begin{equation}\label{e:EquivForm}
    \big\| P_s\big[ \big(\frac{f_t}{\gamma} \big)^\frac1p \big] \big\|_{L^q(\gamma)}^q
    =
    C_s^q \int_{\mathbb{R}^n} F_t(e^{-s}x)^q\, dx
    =
    C_s^q e^{ns} \int_{\mathbb{R}^n} F_t(x)^q\, dx, 
\end{equation}
where $C_s$ is defined in \eqref{e:TransHC-IBL} and 
\begin{equation}\label{e:F_t}
    F_t(x) = F_t^{(s)}(x) := \int_{\mathbb{R}^n} e^{ \frac{1}{p}\langle x,z\rangle}  f_t(z)^\frac1p\, dz= \mathfrak{L}[f_t^\frac1p](\frac{x}{p}).  
\end{equation}
Hence, Theorem \ref{t:MonoHC} is equivalent to a monotonicity of 
$$
Q_s(t):= \log\, \int_{\mathbb{R}^n} F_t(x)^q\, dx. 
$$
Before proving Theorem \ref{t:MonoHC}, let us first complete proofs of Theorems \ref{t:MonoVP} and \ref{t:SymmRevHC} by assuming it. 
\begin{proof}[Proof of Theorem \ref{t:MonoVP}]
Without loss of generality, we may assume $\int_{\mathbb{R}^n} f_t\, dx = 1$ and hence $v(f_t) = \int_{\mathbb{R}^n} f_t^\circ\, dx$ since $\int_{\mathbb{R}^n} f_t\, dx = \int_{\mathbb{R}^n} f_0\, dx$. 
Let us take arbitrary $0\le t_1 < t_2$ and show that 
$$
v(f_{t_1}) \le v(f_{t_2}). 
$$
Theorem \ref{t:MonoHC} implies that $Q_s(t_1)\le Q_s(t_2)$. 
On the other hand, $f_t$ is continuous for each fixed $t>0$ and hence we have that\footnote{To be precise, we need to check that $F_t^{(s)}(x) <+\infty$ for all sufficiently small $s>0$ by fixing arbitrary  $t>0$ and  $x\in\mathbb{R}^n$  but this follows from the fact that $f_t$ has the Gaussian decay.} 
\begin{equation}\label{e:CreatPolar}
\lim_{s\to 0} F_t^{(s)}(x)^q = \big( \esssup_{z\in \mathbb{R}^n} e^{\langle x,z\rangle} f_t(z) \big)^{-1} = f_t^\circ(x)
\end{equation}
for all $x\in\mathbb{R}^n$, by virtue of $p = 1-e^{-2s}$ and $q=1-e^{2s}$. 
When $t=0$, $f_0$ may not be continuous nor fast decaying, but we still have that 
$
\liminf_{s\to 0} \int_{\mathbb{R}^n} F_0^{(s)}(x)^q\, dx  \ge \int_{\mathbb{R}^n} f_0^\circ\, dx. 
$
To see this, we notice that for any $x \in \mathbb{R}^n$, 
\begin{align*}
    F_0^{(s)}(x) 
    &\le 
    \big( \int_{\mathbb{R}^n} f_0\, dz \big) \sup_{z\in \mathbb{R}^n} e^{\frac1p \langle x,z\rangle} f_0(z)^{\frac{1}p - 1}
    = 
    \big( \int_{\mathbb{R}^n} f_0\, dz \big) f_0^\circ( \frac{x}{1-p} )^{\frac1q}
\end{align*}
since $q = p'$. 
In view of $\int f_0\, dz \in (0,\infty)$, this and the change of variable yield that 
$$
\int_{\mathbb{R}^n} F_0^{(s)}(x)^q\, dx\ge 
\big( \int_{\mathbb{R}^n} f_0\, dz\big)^q  (1-p)^n  \int_{\mathbb{R}^n}  f_0^\circ \, dx 
\to 
\int_{\mathbb{R}^n}  f_0^\circ \, dx,\;\; s \to 0 
$$
as we wished. 
In any case, we obtain from Fatou's lemma and Theorem \ref{t:MonoHC} that 
$$
\int_{\mathbb{R}^n} f_{t_1}^\circ \, dx 
\le
\liminf_{s\to0} e^{ Q_s(t_1) } 
\le 
\liminf_{s\to0} e^{ Q_s(t_2) }
$$
for $0\le t_1 < t_2$.
In view of \eqref{e:CreatPolar}, if we formally interchange the order of limit and integral, then we obtain 
$\liminf_{s\to0} e^{ Q_s(t_2) }
=
\int_{\mathbb{R}^n} f_{t_2}^\circ \, dx 
$
which concludes $v(f_{t_1})\le v(f_{t_2})$. 
Therefore, we have only to confirm that 
\begin{equation}\label{e:Changelimint}
\lim_{s\to0} \int_{\mathbb{R}^n} F_t^{(s)}(x)^q\, dx = 
\int_{\mathbb{R}^n} f_t^{\circ}\, dx 
\end{equation}
holds as long as $t>0$. 
To see this, we notice from the regularization of $P_t^*$ that $f_t \ge c_t {\mathbf 1}_{[-l_t,l_t]^n}$ for some $c_t,l_t>0$, where ${\mathbf 1}_E(x) = 1$ if $x\in E$ and $=0$ if $x\notin E$ for general measurable set $E$. This reveals that 
$$
F_t^{(s)}(x) \ge c_t^\frac1p \int_{[-l_t,l_t]^n} e^{\frac1p\langle x,z\rangle}\, dz = c_t^\frac1p p^n \prod_{i=1}^n \frac{e^{ \frac{l_t}{p}x_i } - e^{ -\frac{l_t}{p}x_i }}{x_i}.  
$$
By using an elementary inequality $\frac1t ( e^{\alpha \xi} - e^{-\alpha \xi} ) \ge \frac12 \alpha e^{\frac12 \alpha |\xi|}$ for $\xi\in \mathbb{R}$ and $\alpha\ge0$, we obtain that 
\begin{align}\label{e:DomiFunc}
F_t^{(s)}(x)^q
&\le 
c_t^{-e^{2s}} \big( \frac{l_t}2 \big)^{n(1-e^{2s})} e^{-\frac{l_t}2 e^{2s} \|x\|_{\ell^1}} 
\sim 
c_t^{-1}  e^{-\frac{l_t}2  \|x\|_{\ell^1}}\;\;\;s\to0. 
\end{align}
Since $ c_t^{-1}  e^{-\frac{l_t}2  \|x\|_{\ell^1}} \in L^1(dx)$, we may apply Lebesgue's convergence theorem to conclude \eqref{e:Changelimint}.

\end{proof}

\begin{proof}[Proof of Theorem \ref{t:SymmRevHC}]
From H\"{o}lder's inequality, we have only to show 
$$
\big\| P_s \big[ \big( \frac{f_0}{\gamma} \big)^\frac1p \big] \big\|_{L^q(\gamma)} \ge \big( \int_{\mathbb{R}^n} f_0\, dx \big)^\frac1p
$$
for $p=1 - e^{-2s}$ and $q=1-e^{2s}$. 
In view of $q<0$, this is a consequence of Theorem \ref{t:MonoHC}, $\lim_{t\to\infty}f_t(x) =  (\int_{\mathbb{R}^n}f_0\, dx)\gamma(x)$ and Fatou's lemma as  
$$
\big\| P_s \big[ \big( \frac{f_0}{\gamma} \big)^\frac1p \big] \big\|_{L^q(\gamma)}^q \le {\liminf_{t\to\infty}} \big\| P_s \big[ \big( \frac{f_t}{\gamma} \big)^\frac1p \big] \big\|_{L^q(\gamma)}^q \le \big( \int_{\mathbb{R}^n} f_0\, dx \big)^\frac{q}p. 
$$
\end{proof}

In below, we prove Theorem \ref{t:MonoHC}. 
A key ingredient is the Brascamp--Lieb inequality refining the Poincar\'{e} inequality for a log-concave measure. 
\begin{theorem}[Brascamp--Lieb \cite{BraLi_JFA}] \label{t:BraLi}
    Let $h \in C^2(\mathbb{R}^n) \cap L^1(\mathbb{R}^n)$ be nonnegative and strictly log-concave. 
    Then for any locally Lipschitz $g \in L^2(hdx)$, we have that 
    \begin{equation}\label{e:P-BL}
        \int_{\mathbb{R}^n} |g|^2\, \frac{h}{m(h)}dx - \big(\int_{\mathbb{R}^n} g\, \frac{h}{m(h)}dx \big)^2 \le \int_{\mathbb{R}^n} \big\langle \nabla g, \big( \nabla^2 (-\log\, h) \big)^{-1} \nabla g \big\rangle \, \frac{h}{m(h)}dx, 
    \end{equation}
    where $m(h):=\int_{\mathbb{R}^n} \, h\,dx$. 
\end{theorem}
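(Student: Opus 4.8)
The plan is to give the classical semigroup proof of \eqref{e:P-BL}: solve a Poisson equation for the diffusion operator attached to $h$, and combine it with the Bochner--Bakry--\'{E}mery identity and a weighted Cauchy--Schwarz inequality.

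First I would reduce to a convenient setting. By scaling we may assume $m(h)=1$, so that $d\mu:=h\,dx$ is a probability measure, and by replacing $g$ with $g-\int g\,d\mu$ (which alters neither side of \eqref{e:P-BL}) we may assume $\int_{\mathbb{R}^n}g\,d\mu=0$, so the left-hand side is just $\int_{\mathbb{R}^n}g^2\,d\mu$. Write $h=e^{-\phi}$ with $\phi$ strictly convex and set $L:=\Delta-\nabla\phi\cdot\nabla$, which is symmetric on $L^2(\mu)$ and obeys $\int u(-Lv)\,d\mu=\int\nabla u\cdot\nabla v\,d\mu$. By a preliminary approximation step --- regularizing $h$ (mollification, restriction to a large ball, addition of a small $\varepsilon|x|^2/2$ to $\phi$) and truncating $g$ to a bounded Lipschitz compactly supported function --- I would arrange that $\phi$ is smooth and uniformly convex and that $g$ is smooth with enough decay to justify all the integrations by parts below; the general case then follows by a limiting argument, using that an integrable log-concave $h$ decays exponentially so that tails are controlled and both sides of \eqref{e:P-BL} pass to the limit. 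I expect this regularization to be the main obstacle: for a general strictly log-concave $h$ the Poincar\'{e} constant of $\mu$ need not be controlled, and one must be careful that the solution of the Poisson equation, its second derivatives, and the resulting boundary terms really are negligible.

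Granting the nice setting, solve $-Lu=g$: since $g$ is $\mu$-orthogonal to constants and $L$ has a spectral gap under uniform convexity, there is a smooth solution $u$, with $u,\nabla u,\nabla^2u$ decaying by elliptic regularity, so that
$$
\int_{\mathbb{R}^n}g^2\,d\mu=\int_{\mathbb{R}^n}g\,(-Lu)\,d\mu=\int_{\mathbb{R}^n}\nabla g\cdot\nabla u\,d\mu.
$$
Next apply the Bochner identity $\tfrac12L|\nabla u|^2=|\nabla^2u|^2+\nabla u\cdot\nabla(Lu)+\langle(\nabla^2\phi)\nabla u,\nabla u\rangle$; integrating against $\mu$ and using $\int L(\cdot)\,d\mu=0$, $\int\nabla u\cdot\nabla(Lu)\,d\mu=-\int(Lu)^2\,d\mu$ and $Lu=-g$ gives
$$
\int_{\mathbb{R}^n}\langle(\nabla^2\phi)\nabla u,\nabla u\rangle\,d\mu=\int_{\mathbb{R}^n}g^2\,d\mu-\int_{\mathbb{R}^n}|\nabla^2u|^2\,d\mu\le\int_{\mathbb{R}^n}g^2\,d\mu.
$$
Since $\nabla^2\phi>0$, a weighted Cauchy--Schwarz inequality yields
$$
\int_{\mathbb{R}^n}\nabla g\cdot\nabla u\,d\mu\le\Big(\int_{\mathbb{R}^n}\langle\nabla g,(\nabla^2\phi)^{-1}\nabla g\rangle\,d\mu\Big)^{1/2}\Big(\int_{\mathbb{R}^n}\langle(\nabla^2\phi)\nabla u,\nabla u\rangle\,d\mu\Big)^{1/2}.
$$
Combining the last three displays, with $V:=\int g^2\,d\mu$ and $W:=\int\langle\nabla g,(\nabla^2\phi)^{-1}\nabla g\rangle\,d\mu$, we get $V\le W^{1/2}V^{1/2}$, hence $V\le W$; undoing the normalization $m(h)=1$ this is exactly \eqref{e:P-BL}. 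Equivalently, the whole computation can be packaged as the Helffer--Sj\"{o}strand formula $\mathrm{Var}_\mu(g)=\int_{\mathbb{R}^n}\langle\mathcal{A}^{-1}\nabla g,\nabla g\rangle\,d\mu$ for the operator $\mathcal{A}:=-L+\nabla^2\phi\ge\nabla^2\phi$ acting on vector fields, whence $\mathcal{A}^{-1}\le(\nabla^2\phi)^{-1}$ gives the bound directly.
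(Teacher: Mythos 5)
Your argument is sound, but note that the paper does not prove this statement at all: it is imported verbatim from Brascamp--Lieb \cite{BraLi_JFA}, so there is no in-paper proof to match. What you give is the by-now classical dual $L^2$-method proof (Hörmander/Helffer--Sjöstrand, or in semigroup language Bakry--\'Emery): solve $-Lu=g$ for the generator $L=\Delta-\nabla\phi\cdot\nabla$, use the integrated Bochner identity $\int\langle(\nabla^2\phi)\nabla u,\nabla u\rangle\,d\mu=\int g^2\,d\mu-\int|\nabla^2u|^2\,d\mu$, and close with the weighted Cauchy--Schwarz step; the algebra $V\le W^{1/2}V^{1/2}$ is correct, as is your Bochner formula and the packaging via $\mathcal{A}=-L+\nabla^2\phi\ge\nabla^2\phi$. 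This is genuinely different from the original Brascamp--Lieb argument, which proceeds by induction on dimension through one-dimensional Poincar\'e-type inequalities and their results on marginals of log-concave functions; your route is shorter and makes the role of the Hessian weight transparent, while theirs avoids any elliptic solvability or spectral-gap input and needs no a priori regularity of the potential.

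The only place requiring care is exactly the one you flag: for a merely strictly (not uniformly) log-concave $h$ the spectral gap and the decay of $u,\nabla u,\nabla^2u$ needed for the integrations by parts are not free. The standard fix is the one you indicate: work with $h_\varepsilon=e^{-\phi-\varepsilon|x|^2/2}$ and truncated Lipschitz $g$, where everything is justified, and then pass to the limit using $h_\varepsilon\uparrow h$, dominated convergence for the variance term, and the pointwise bound $\langle\nabla g,(\nabla^2\phi+\varepsilon\,{\rm id})^{-1}\nabla g\rangle h_\varepsilon\le\langle\nabla g,(\nabla^2\phi)^{-1}\nabla g\rangle h$, which makes the right-hand sides compare in the correct direction as $\varepsilon\to0$. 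With that limiting step spelled out (mollification of $h$ itself is not needed, since $h$ is already assumed $C^2$), your proof is complete and correct.
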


\begin{proof}[Proof of Theorem \ref{t:MonoHC}]
In this proof, we fix $p = p_s = 1-e^{-2s}$ and $q=q_s=1-e^{2s}$. 
We shall first show that $Q_s(t_1)\le Q_s(t_2)$ for any  $0<t_1\le t_2$, and then treat the case $t_1=0$ later. Since $t_1,t_2>0$ are now fixed, without loss of generality, we may suppose that $f_0$ is bounded and compactly supported by the standard approximation argument. 
This is because, for a fixed $t>0$ and an arbitrary $f_0$, we may see that 
\begin{equation}\label{e:Change12Oct}
    \lim_{N\to \infty} \int_{\mathbb{R}^n} \big( \int_{\mathbb{R}^n} e^{\frac1p\langle x,z\rangle} \big(f_0^{(N)}\big)_t(z)^\frac1p\, dz \big)^q\, dx 
=
e^{Q_s(t)}, 
\end{equation}
where $f_0^{(N)}:= f_0 {\mathbf 1}_{[-N,N]^n \cap \{ f_0 \le N\}}$ by virtue of the regularization of $P_t^*$ and \eqref{e:DomiFunc}. 
From this approximation and the explicit expression of ${P_t^*}$, we may check that  
\begin{equation}\label{e:CompSupp}
c_1 (1+|x|+|x|^2)\gamma_\beta \le f_t, |\nabla f_t|, |\Delta f_t| \le c_2 (1+|x|+|x|^2) \gamma
\end{equation}
for some $c_1, c_2, \beta>0$ depending on $f_0, t$ as long as $t>0$.  
The pointwise bound \eqref{e:CompSupp} is enough to justify applications of  Lebesgue's  convergence theorem and integration by parts, that we will use in the following argument.
In particular, \eqref{e:CompSupp} confirms that $Q_s'(t)$ is well-defined for all $t>0$, and so the goal is reduced to show $Q_s'(t)\ge0$ for $t>0$.

Since $f_t$ solves the Fokker--Planck equation $\partial_t f_t = \mathcal{L}^* f_t$, we have that 
\begin{align*}
-\frac{p}{q} Q_s'(t)
&=
- \frac{1}{ m(F_t^q) }
\bigg(
\int_{\R^n}
F_t(x)^{q-1}
\left(
\int_{\R^n} \mathcal{L}_z[e^{\frac{1}{p} \langle x, z \rangle } f_t(z)^{\frac1p-1}] f_t(z) \, dz
\right)
\, dx
\bigg), 
\end{align*}
where $F_t$ is defined in \eqref{e:F_t}. 
By using $\Delta f_t = f_t \Delta \log f_t + f_t |\nabla \log f_t |^2$, we notice that 
\begin{align*}
& \mathcal{L}_z[e^{\frac{1}{p} \langle x, z \rangle } f_t(z)^{\frac1p-1}]
\\
&=
\frac{|x|^2}{p^2} e^{\frac1p \langle x, z \rangle} f_t(z)^{\frac1p-1} 
+ \frac{2}{p}(\frac1p -1) e^{\frac1p \langle x, z \rangle} f_t(z)^{\frac1p-1} \langle x, \nabla \log f_t(z) \rangle \\
& \quad + (\frac1p -1)^2 e^{\frac1p \langle x, z \rangle} f_t(z)^{\frac1p -1} |\nabla \log f_t(z)|^2 
 + (\frac1p-1) e^{\frac1p \langle x, z \rangle} f_t(z)^{\frac1p-2} \Delta \log f_t(z) \\
&\quad - \frac1p \langle x, z \rangle e^{\frac1p \langle x, z \rangle} f_t(z)^{\frac1p-1}
- (\frac1p -1) e^{\frac1p \langle x, z \rangle} f_t(z)^{\frac1p-1} \langle z, \nabla \log f_t(z) \rangle, 
\end{align*}
and hence 
\begin{align*}
-\frac{p}{q} m(F_t^q) Q_s'(t)
&=
- \frac{1}{p^2}
\int_{\R^n}
|x|^2 F_t(x)^{q}
\, dx
\\
&\quad 
- \frac{2}{p}(\frac1p -1) 
\int_{\R^n}
F_t(x)^{q-1}
\left\langle
x, 
\int_{\R^n} e^{\frac1p \langle x, z \rangle } f_t^{\frac1p} \nabla \log f_t \, dz
\right\rangle
\, dx
\\
& \quad 
- (\frac1p -1)^2
\int_{\R^n}
F_t(x)^{q-1}
\left(
\int_{\R^n} e^{\frac1p \langle x, z \rangle } f_t^{\frac1p } |\nabla \log f_t|^2  \, dz
\right)
\, dx
\\
& \quad 
- (\frac1p-1)
\int_{\R^n}
F_t(x)^{q-1}
\left(
\int_{\R^n} e^{\frac1p \langle x, z \rangle } f_t^{\frac1p} \Delta \log f_t  \, dz
\right)
\, dx
\\
& \quad 
+ \frac1p
\int_{\R^n}
F_t(x)^{q-1}
\left\langle
x, 
\int_{\R^n} z e^{\frac1p \langle x, z \rangle} f_t^{\frac1p}  \, dz
\right\rangle
\, dx
\\
& \quad 
+ (\frac1p -1)
\int_{\R^n}
F_t(x)^{q-1}
\left(
\int_{\R^n} e^{\frac1p \langle x, z \rangle } f_t^{\frac1p} \langle z, \nabla \log f_t \rangle  \, dz
\right)
\, dx. 
\end{align*}
By $\nabla \log f_t = \frac{1}{f_t}\nabla f_t$ and  integration by parts, we may check the following identities 
\begin{align*}
&\int_{\R^n} e^{\frac1p \langle x, z \rangle} f_t^{\frac1p} \langle z, \nabla \log f_t \rangle \, dz
=
- pn {F_t(x)} 
- \left\langle 
x,  
\int_{\R^n} z e^{\frac1p \langle x, z \rangle} f_t^{\frac1p} \, dz
\right\rangle, 
\\
&\int_{\R^n} e^{\frac1p \langle x, z \rangle} f_t^{\frac1p} \nabla \log f_t \, dz
=
- x { F_t(x)},  
\\
&\int_{\R^n} e^{\frac1p \langle x, z \rangle} f_t^{\frac1p } |\nabla \log f_t|^2  \, dz
=
|x|^2 { F_t(x)} 
- p\int_{\R^n} e^{\frac1p \langle x, z \rangle} f_t^{\frac1p } \Delta \log f_t  \, dz. 
\end{align*}
Applying these identities, it follows that 
\begin{align*}
-\frac{p}{q} m(F_t^q) Q_s'(t)
&=
- 
\int_{\R^n}
|x|^2
F_t(x)^{q}
\, dx
\\
& \quad 
- (1-p) 
\int_{\R^n}
F_t(x)^{q-1}
\left(
\int_{\R^n} e^{\frac1p \langle x, z \rangle} f_t^{\frac1p } \Delta \log f_t  \, dz
\right)
\, dx
\\
& \quad 
+
\int_{\R^n}
F_t(x)^{q-1}
\left\langle 
x, 
\int_{\R^n} z e^{\frac1p \langle x, z \rangle} f_t^{\frac1p} \, dz
\right\rangle
\, dx
\\
& \quad 
- n(1-p)
\int_{\R^n}
F_t(x)^{q}
\, dx. 
\end{align*}
For the third term, we notice that 
\begin{align*}
\int_{\R^n}
F_t(x)^{q-1}
\left\langle 
x, 
\int_{\R^n} z e^{\frac1p \langle x, z \rangle} f_t^{\frac1p} \, dz
\right\rangle
\, dx 
&=
p
\int_{\R^n}
F_t(x)^{q-1}
\left\langle 
x, 
\nabla
F_t(x)
\right\rangle
\, dx 
\\ 
&=
-\frac pq n
\int_{\R^n}
F_t(x)^{q}
\, dx. 
\end{align*}
Therefore, together with $\frac1p + \frac1q =1$, we obtain the identity that 
\begin{align}
-\frac{p}{q}  Q_s'(t)
&=
- 
\int_{\R^n}
|x|^2
\frac{F_t(x)^{q}}{m(F_t^q)}
\, dx \label{e:STep1_Id}
\\ 
& \quad 
- (1-p) 
\int_{\R^n}
\frac{F_t(x)^{q-1}}{m(F_t^q)}
\left(
\int_{\R^n} e^{\frac1p \langle x, z \rangle} f_t^{\frac1p } \Delta \log f_t  \, dz
\right)
\, dx. \nonumber 
\end{align}
We now apply the Poincar\'{e}--Brascamp--Lieb inequality \eqref{e:P-BL} to the first term. 
To this end, we remark that $F_t^q$ is even and strictly log-concave. 
The evenness is an immediate from the evenness of $f_0$. To see the strict log-concavity, we observe that 
\begin{align*}
\nabla^2 (-\log\, F_t^q)(x)
&= 
-\frac{q}{p^2} {\rm cov}\, (h_{t,x}),
\end{align*}
where 
$$
h_{t,x}(z):= e^{ \frac1p\langle x,z\rangle } f_t(z)^\frac1p \frac1{F_t(x)},
$$
and 
\begin{align*}
{\rm cov}\, (h)
&:=
\int_{\mathbb{R}^n} z\otimes z \frac{h(z)}{m(h)}\, dz
-
\big(
\int_{\mathbb{R}^n} z \frac{h(z)}{m(h)}\, dz
\big)
\otimes
\big(
\int_{\mathbb{R}^n} z \frac{h(z)}{m(h)}\, dz
\big). 
\end{align*}
Since ${\rm cov}\, (h_{t,x})>0$, this confirms the strict log-concavity of $F_t^q$. 
Since  $F_t^q$ is even,  it holds that $\int_{\mathbb{R}^n} x_i F_t(x)^q\, dx = 0$ for $i=1,\ldots,n$. Hence we may apply \eqref{e:P-BL} with $g(x)=x_i$ and $h = F_t^q$ for each $i=1,\ldots,n$, and sum up to see that 
$$
\int_{\R^n}
|x|^2
\frac{F_t(x)^{q}}{m(F_t^q)} 
\, dx
\le 
\int_{\R^n}
{\rm Tr}
\left[
\left(
\nabla^2  (- \log F_t^q) (x)
\right)^{-1}
\right]
\frac{F_t(x)^q}{m(F_t^q)}
\, dx.
$$
We then appeal to the matrix form of  Cram\'{e}r--Rao's inequality; see \cite[(12)]{ELS} for instance, to estimate 
\begin{align*}
\big( \nabla^2 (-\log\, F_t^q)(x) \big)^{-1}
&= 
-\frac{p^2}{q} {\rm cov}\, (h_{t,x})^{-1} 
\le 
-\frac{p^2}{q} \int_{\mathbb{R}^n} \nabla^2_z ( -\log\, h_{t,x} )(z) h_{t,x}(z)\, dz. \end{align*}
This yields that 
$$
\int_{\R^n}
|x|^2
\frac{F_t(x)^{q}}{m(F_t^q)} 
\, dx
\le 
-\frac{p}{q} 
\int_{\R^n}
\big(
\int_{\mathbb{R}^n} \Delta ( -\log\, f_t )(z)h_{t,x}(z)\, dz
\big)
\frac{F_t(x)^q}{m(F_t^q)}
\, dx.
$$
By putting together this with \eqref{e:STep1_Id} and $ \frac{p}q = p-1 $, we conclude $Q_s'(t)\ge0$ for $t>0$.

We next consider the case $t_1=0$. We have only to show  that $Q_s(0) \le Q_s(t_0)$ for all $t_0>0$. 
Fix $t_0>0$ and an arbitrary nonzero $f_0$. Let 
$$
f_0^{(N)}:= f_0 \mathbf{1}_{[-N,N]^n},\;  F_0^{(N)}(x):= \int_{\mathbb{R}^n} e^{\frac1p \langle x,z\rangle} f_0^{(N)}(z)^\frac1p\, dz.
$$ 
Since $f^{(N)}_0 \uparrow f_0$, the monotone convergence theorem confirms that $F_0^{(N)} \uparrow F_0$. 
So Fatou's lemma shows that 
\begin{align*}
    e^{Q_s(0)}
    &= 
    \int_{\mathbb{R}^n} 
    \lim_{N\to\infty} 
    F_0^{(N)}(x)^q\, dx 
    \le 
    \liminf_{N\to \infty} 
    \int_{\mathbb{R}^n} 
    F_0^{(N)}(x)^q\, dx. 
\end{align*}
Next we show that 
\begin{equation}\label{e:Fixed-N}
\int_{\mathbb{R}^n} 
F_0^{(N)}(x)^q\, dx 
\le 
\int_{\mathbb{R}^n}
\big(
\int_{\mathbb{R}^n} 
e^{\frac1p \langle x,z\rangle} \big( f_0^{(N)} \big)_{t_0}(z)^\frac1p\, dz 
\big)^q\, dx
\end{equation}
for each fixed $N$. 
To see this, we note that 
$$
F_0^{(N)}(x)
=
\lim_{t\to0} 
\int_{\mathbb{R}^n} 
e^{\frac1p \langle x,z\rangle} \big( f_0^{(N)} \big)_{t}(z)^\frac1p\, dz 
$$
holds for each $x$ as $f_0^{(N)}$ is compactly supported.
Since we have already proved $Q_s(t)\le Q_s(t_0)$ for $t\in (0,t_0)$, this together with Fatou's lemma yields that 
\begin{align*}
\int_{\mathbb{R}^n} 
F_0^{(N)}(x)^q\, dx 
&\le 
\liminf_{t\to0} 
\int_{\mathbb{R}^n}
\big(
\int_{\mathbb{R}^n} 
e^{\frac1p \langle x,z\rangle} \big( f_0^{(N)} \big)_{t}(z)^\frac1p\, dz 
\big)^q\, dx\\
&\le 
\int_{\mathbb{R}^n}
\big(
\int_{\mathbb{R}^n} 
e^{\frac1p \langle x,z\rangle} \big( f_0^{(N)} \big)_{t_0}(z)^\frac1p\, dz 
\big)^q\, dx
\end{align*}
which is \eqref{e:Fixed-N}. 
Therefore, we conclude from \eqref{e:Change12Oct} at $t=t_0$ that 
$$
e^{Q_s(0)}
\le 
\liminf_{N\to\infty} 
\int_{\mathbb{R}^n}
\big(\int_{\mathbb{R}^n} 
e^{\frac1p \langle x,z\rangle} \big( f_0^{(N)} \big)_{t_0}(z)^\frac1p\, dz\big)^q
\, dx
=e^{Q_s(t_0)}.
$$
\end{proof}




\if0 
Let us conclude this section by mentioning about the work by Aoki et al. \cite{ABBMMS}. 
At this stage, it is worth to introduce more general framework than Gaussian space. For a probability measure $d\mathfrak{m} = e^{-V}dx$ on $\mathbb{R}^n$ satisfying $\nabla^2 V \ge K$ for some $K>0$, one can consider  general Ornstein-Uhlenbeck semigroup $P_s = e^{s \mathcal{L}_{\mathfrak{m}}}$ associated with the measure where $\mathcal{L}_{\mathfrak{m}} u := \Delta u - \langle \nabla V, \nabla u \rangle $. 
In this frame work, the reverse hypercontractivity takes the following form, see \cite{BGL}. 
If ${q}\le {p}\le 1$ satisfy $ \frac{{q}-1}{{p}-1} \le e^{2Ks}$ then for any positive $g \in L^1(d\mathfrak{m})$, 
\begin{equation}\label{e:RevHCPo}
\big\| P_s \big[ g^\frac1{{p}} \big] \big\|_{L^{{q}}(\mathfrak{m})} \ge \big(\int_{\mathbb{R}^n} g \, d\mathfrak{m} \big)^\frac1{{p}}.
\end{equation}
Although there is a flow monotonicity proof of \eqref{e:RevHCPo} based on the $\Gamma$-calculus  \cite{BGL} which is rather well-known, Aoki et al. \cite{ABBMMS} provided yet another flow monotonicity proof where they flew the functional in a different way from the one in \cite{BGL}. Indeed our flow monotonicity scheme is motivated from \cite{ABBMMS} as we explain below. 
For a sufficiently regular solution $u_t$ of $\partial u_t = \mathcal{L}_{\mathfrak{m}} u_t$, we let $\widetilde{u}_t := P_s \big[ u_t^\frac1p \big]^q$. Then Aoki et al. \cite{ABBMMS} observed an identity 
\begin{equation}\label{e:ABBMMS}
    \partial_t \widetilde{u}_t - \mathcal{L}_{\mathfrak{m}} \widetilde{u}_t = \frac{q}{pp'} \big( \widetilde{u_t}^{1 - \frac1q} P_s \big[ u_t^{\frac{1}p - 2} | \nabla u_t |^2 \big] - p^2 \frac{q-1}{p-1} \widetilde{u_t}^{1 - \frac2q} \big| \nabla P_s \big[ u_t^\frac1p \big] \big|^2 \big)
\end{equation}
for any pair of $p,q \in \mathbb{R}^n\setminus\{0\}$. 
Indeed, this identity is enough to derive \eqref{e:RevHCPo} when $\frac{{q}-1}{{p}-1} \le e^{2Ks}$ as follows. We here only consider the case $q<0<p$ in which case $\frac{q}{pp'} >0$.  
From the identity \eqref{e:ABBMMS} together with $ \big| \nabla P_s h \big| \le e^{-Ks} P_s \big[ \big| \nabla h \big| \big] $ which follows from the assumption $\nabla^2 V\ge K$, one can see that $\partial_t \widetilde{u}_t \ge 0$. This formally proves that  $\widetilde{Q}_s(t) := \int_{\mathbb{R}^n} \widetilde{u}_t\, d\gamma$ is monotone increasing which concludes \eqref{e:RevHCPo}. 
Notice that if we let $\mathfrak{m}=\gamma$ and $f_0 = {u_0}{\gamma}$ then $f_t:= u_t\gamma$ solves $\partial_t f_t = \mathcal{L}^* f_t$. Hence this $\widetilde{Q}_s(t)$ coincides with $\big\| P_s\big[\big( \frac{f_t}{\gamma} \big)^\frac1p \big]\big\|_{L^q(\gamma)}^q$ which the  functional that we treated above apart from the difference of $p,q$. 
As we explained in the introduction, our main purpose of this article is to go beyond the range $\frac{q-1}{p-1} \le e^{2Ks}$. However the strategy of \cite{ABBMMS} clearly stops to work if $\frac{q-1}{p-1} > e^{2Ks}$ as the consequence \eqref{e:RevHCPo} fails to hold for general input $g$; recall that one cannot expect such improvement unless one imposes some symmetry constraint on $g$. 
Moreover, when $\mathfrak{m} = \gamma$ and $\frac{q-1}{p-1} > e^{2s}$, one can see from direct calculations that the statement $\partial_t \widetilde{u}_t - \mathcal{L}_{\gamma} u_t \ge 0$ is too strong to expect by testing $u_0$ to be centered Gaussian inputs. 
Nevertheless, by invoking the identity \eqref{e:ABBMMS}, one at least has that 
\begin{align*}
\frac{pp'}{q} \widetilde{Q}_s'(t) 
&=  
\int_{\mathbb{R}^n} P_s \big[ u_t^\frac1p \big]^{q-1} P_s \big[ u_t^{\frac{1}p - 2} | \nabla u_t |^2 \big]\, d\mathfrak{m} \\
&\quad 
- p^2 \frac{q-1}{p-1} \int_{\mathbb{R}^n} P_s\big[u_t^\frac1p\big]^{q-2} \big| \nabla P_s \big[ u_t^\frac1p \big] \big|^2\, d\mathfrak{m}
\end{align*}
and hence it might be feasible to expect a weaker statement 
\begin{equation}\label{e:ConjPo}
    p^2 \frac{q-1}{p-1} \int_{\mathbb{R}^n} \big| \nabla \log\, P_s \big[ u_t^\frac1p \big]  \big|^2 P_s\big[u_t^\frac1p\big]^{q}\, d\mathfrak{m}
    \le 
    \int_{\mathbb{R}^n}  \frac{ P_s \big[ u_t^{\frac{1}p} | \nabla \log\, u_t |^2 \big]}{P_s\big[u_t^\frac1p\big]} P_s \big[ u_t^\frac1p \big]^{q}\, d\mathfrak{m}. 
\end{equation}
If this could be true the desired consequence $\widetilde{Q}_s'(t)\ge0$ would follow. 
In fact, what we have done in the above can be understood as to prove \eqref{e:ConjPo} when $\mathfrak{m} = \gamma$ and $p=1-e^{-2s}, q=1-e^{2s}$. 
In our case $\mathfrak{m}=\gamma$, we heavily relied on the explicit integral representation of $P_s$ which we do not have in general and so it is not clear to us if one can prove \eqref{e:ConjPo} or \eqref{e:RevHCPo}.
Related to this problem, we mention the work by Klartag \cite{Klartag07} for Blaschke--Santal\'{o} inequality for more general measure.  
\fi


\if0
\section{Proof of Theorem \ref{Prop:Stability30Sep}}\label{Sec3}
We often identify $\Lambda = \frac1{\lambda^\circ} \ge \lambda$ in this section. With this notation, we have that 
$$
\mathcal{F}(\lambda, \lambda^\circ)
=
\{ h \in C^2(\mathbb{R}^n,\mathbb{R}_+): \lambda \le \nabla(-\log\, h) \le \Lambda \}. 
$$
A key ingredient of the proof of Theorem \ref{Prop:Stability30Sep} is a quantitative improvement of the Poincar\'{e}--Brascamp--Lieb inequality due to Cordero-Erausquin \cite{Cordero}.  
\begin{theorem}[Proposition 1.6 in \cite{Cordero}]\label{t:Cordero17}
	Let $h \in C^2(\mathbb{R}^n) \cap L^1(\mathbb{R}^n)$ be nonnegative and strictly log-concave. Then for any locally Lipschitz $g \in L^2(hdx)$,  
	\begin{align*}
	&\int_{\mathbb{R}^n} |g|^2\, \frac{h}{m(h)}dx - \big( \int_{\mathbb{R}^n} g\, \frac{h}{ m(h) }dx\big)^2\\
    &\le 
	\int_{\mathbb{R}^n} \big\langle \nabla g, \nabla^2 ( -\log\, h )^{-1} \nabla g \big\rangle \, \frac{h}{m(h)}dx \\
	&\quad 
    - c(h)
    \int_{\mathbb{R}^n} \big|g(x) - \big\langle u_0, \nabla (-\log\, h)(x) \big\rangle - c_0 \big|^2\, \frac{h}{m(h)}dx. 
	\end{align*}
Here,  
    $$
    c_0 \coloneqq \int_{\mathbb{R}^n} g \frac{h}{m(h)}\, dx, \; 
    u_0:= \int_{\mathbb{R}^n} y (g(y) - c_0)\, \frac{h(y)}{m(h)}dy, 
    $$
    $$c(h):=\frac{ c \lambda(h) }{ \sup_x \lambda_{\rm max}(\nabla^2 (-\log\, h))(x) + c \lambda(h) }, 
    $$
and $c$ is a numerical constant. Also $\lambda(h)$ denotes its Poincar\'{e} constant, and $\lambda_{\rm max}(A)$ denotes the maximum eigenvalue of a symmetric matrix $A$. 
\end{theorem}
Especially if $h$ is even and $g$ is odd, then $c_0=0$ holds. 
As is mentioned in \cite{Cordero}, $\lambda(h)$ can be controlled by  
$$
\lambda(h) \ge c \big( \int \frac1{ \lambda_{\rm min}(\nabla^2( -\log\, h ))}\, \frac{h}{m(h)}dx \big)^{-1} \ge c \inf_{x\in \mathbb{R}^n} \lambda_{\rm min}(\nabla^2( -\log\, h )(x)), 
$$
where $\lambda_{\rm min}(A)$ denotes the minimum eigenvalue of a symmetric matrix $A$. 
Therefore, if $h\in \mathcal{F}(\lambda,\lambda^\circ)$ for some $0< \lambda^\circ \le \lambda^{-1}$ then we have that 
\begin{equation}\label{e:Lowerc(h)}
    c(h) \ge c \lambda \lambda^\circ. 
\end{equation}
This is a main role of the assumption  $f_0\in \mathcal{F}(\lambda,\lambda^\circ)$. 
A starting point of the proof of Theorem \ref{Prop:Stability30Sep} is to refine the proof of Theorem \ref{t:MonoHC} by a use of Theorem \ref{t:Cordero17} instead of \eqref{e:P-BL}. 
Thanks to this refinement, we will confirm that the gap $\delta_{\rm BS}(f_0,f_{t})$ has a lower bound by a certain distance between $f_0$ and centered Gaussians, which is the relative Fisher information, in a time-averaged sense; see Lemma \ref{l:BeforeTimeBack}. 
In order to upgrade the time averaged estimate, we borrow the strategy of Mossel--Neeman \cite{MoNe} for proving stability estimates for the Gaussian isoperimetric inequality. That is the time back argument.  
In order to run the argument in our setting, we will need to get rid of the nonlinear operation $f\mapsto f^\circ$. For that purpose, we will again appeal to the assumption $f_0 \in \mathcal{F}(\lambda,\lambda^\circ)$; see Lemma \ref{l:RemovePolar} below. 
In below, we begin with correcting lemmas that we will employ. 
\subsection{Preliminaries}\label{SS:3.1}
We will often use a well-known fact that the Fokker--Planck flow $P_t^*$ preserves uniform log-concavity and semi-log-convexity. 
For the sake of completeness, we give a short proof of this fact. 
\begin{lemma}\label{l:Preservation}
	Let $t,\beta>0$.
	If $\beta\ge1$, then we have that 
		$$
		\nabla^2(-\log\, f_0)\ge \frac1\beta \; \Rightarrow\; \nabla^2(-\log\, P_t^*f_0)\ge \frac1\beta.
		$$
	If $0<\beta\le1$, then we have that 
		$$
		\nabla^2(-\log\, f_0)\le \frac1\beta\; \Rightarrow\; \nabla^2(-\log\, P_t^*f_0)\le \frac1\beta. 
		$$
\end{lemma}

\begin{proof}
It suffices to show 
\begin{align}
&\nabla^2\log\, g \ge 1 - \frac1\beta \;\Rightarrow\; \nabla^2\log\, P_tg \ge 1 - \frac1{\beta_t},\label{e:PreLogConv}\\
&\nabla^2\log\, g \le 1 - \frac1\beta \;\Rightarrow\; \nabla^2\log\, P_tg \le 1 - \frac1{\beta_t},\label{e:PreLogConc}
\end{align}
where $\beta_t:= 1 - e^{-2t} + e^{-2t} \beta$. 
In fact, 
when $\beta\ge 1$, we know $1\le \beta_t\le \beta$ and hence  \eqref{e:PreLogConc} with $g = \frac{f_0}{\gamma}$ and $P_tg = \frac{P_t^* f_0}{\gamma}$ yields the conclusion. 
We can prove the case of $0<\beta\le 1$ in a similar way. 

Let us prove \eqref{e:PreLogConc}. 
Take arbitrary $g$ satisfying the assumption in \eqref{e:PreLogConc} and  write 
\begin{align*}
P_tg(x)
&=: 
\frac{1}{ (2\pi (1-e^{-2t}))^\frac{n}2 }
\int_{\mathbb{R}^n} \Phi(x,y) G(y)\, dy \\
\end{align*}
where 
$$
\Phi(x,y):= e^{ -\frac{ | e^{-t} x -y |^2 }{2(1-e^{-2t})} + \frac12(1-\frac1\beta)|y|^2 },\;
G(y):= e^{ -\frac12(1-\frac1\beta)|y|^2 } g(y).
$$
We then regard $G$ as a function of $(x,y)$ by defining $G(x,y):=G(y)$. Since $\nabla^2\log\, g\le 1-\frac1\beta$, 
$G(x,y)$ is log-concave. 
Hence, we apply Theorem 4.3 in \cite{BraLi_JFA} to see that  
$$
\int_{\mathbb{R}^n} \Phi(x,y)\, dy 
= 
c_{t,\beta} 
e^{ - \frac12 \langle x, D x\rangle },
\; 
D:= \frac1{1-e^{-2t}} 
( e^{-2t} - e^{-2t} \frac{\beta}{\beta_t} ) {\rm id}_{\mathbb{R}^n}
$$
for some $c_{t,\beta}>0$, and that 
$$
H(x):= e^{\frac12 \langle x, Dx\rangle} \int_{\mathbb{R}^n} \Phi(x,y) G(y)\, dy
$$
is log-concave. In particular, we see that $\nabla^2 \log\, P_tg \le -D$.  
By the direct computation, $D= -(1-\frac1{\beta_t}){\rm id}_{\mathbb{R}^n} $ and hence this concludes that $1-\frac1{\beta_t} \ge \nabla^2 \log\, P_tg(x)$.
Note that Theorem 4.3 in \cite{BraLi_JFA} also ensures the preservation of the log-convexity, and so the proof of \eqref{e:PreLogConv} is similar. 
\end{proof}

By combining Lemmas \ref{l:LogConF_t^q} and \ref{l:Preservation}, we may understand the behaviors of   semi-log-convexity and uniform log-concavity under the nonlinear operation $f_0 \mapsto F_t^q$.
\begin{lemma}\label{Cor:HessUpLowBound}
Let $q<0<p$ and $\lambda\le 1\le \Lambda$. 
Then 
$$
\lambda \le \nabla^2( - \log\, f_0) \le \Lambda
\;\Rightarrow\; 
-\frac{q}{p} \frac1\Lambda \le \nabla^2(-\log\, F_t^q) \le -\frac{q}{p} \frac1{\lambda}.
$$	
\end{lemma}

\begin{proof}
From Lemma \ref{l:Preservation}, we have $ \lambda \le \nabla^2( - \log\, f_t) \le \Lambda $ for all $t>0$. 
Combining this with \eqref{e:LowerHess}, we see that 
$$
\nabla^2(-\log\, F_t^q)(x) \ge -\frac{q}{p} \frac1\Lambda.
$$
Similarly, the lower bound follows from \eqref{e:UpperHess}. 
\end{proof}

\if0 
===========================================================================
===========================================================================
In particular, when $g(x) = x$, 
\begin{corollary}
Let $F$ be symmetric and log-concave. Then 
	\begin{align*}
	\int x^2\, Fdx \le& 
	\int \frac{1}{ (-\log\, F)'' }\, Fdx \\
	&- \frac{ c \lambda(F) }{ \sup_x (-\log\, F)''(x) + c \lambda(F) } \int \big|x - \beta(F)(-\log\, F)'(x) \big|^2\, Fdx 
	\end{align*}
	where $\beta(F):= \int y^2\, \frac{F}{\int F}dy$: variance.
\end{corollary}

By using this with $F= F_t^q$, 
\begin{align*}
\Lambda'(t) \int_\mathbb{R} F_t^q\, dx 
\ge& 
-\int \frac{1}{ (-\log\, F_t^q)'' }\, F_t^q dx \\
&+ \frac{ c \lambda(F_t^q) }{ \sup_x (-\log\, F_t^q)''(x) + c \lambda(F_t^q) } \int \big|x - \beta(F_t^q)(-\log\, F_t^q)'(x) \big|^2\, F_t^qdx \\
&+(1-p) 
\int_{\mathbb{R}} F_t(x)^{q-1} \big(\int ( -\log\, f_t)''(z) e^{\frac1p xz} f_t(z)^\frac1p\, dz \big)\, dx. 
\end{align*}

\begin{lemma}\label{l:LogF_t^q}
	Let $f_t$: log-concave. For every $x\in \mathbb{R}$, 
	\begin{align*}
	-\frac{q}p \bigg( \int (-\log\, f_t)''(y) \frac{e^{\frac1p xy} f_t(y)^\frac1p}{F_t(x)}\, dy \bigg)^{-1}
	&\le 
	(-\log\, F_t^q)''(x)\\
	&\le 
	-\frac{q}p 
	\int \frac{1}{(-\log\, f_t)''(y)} \frac{e^{\frac1p xy} f_t(y)^\frac1p}{F_t(x)}\, dy.
	\end{align*}

\end{lemma}

\begin{proof}
P-BL nad reverse P-BL
\end{proof}

\begin{lemma}
Suppose $\psi:\mathbb{R}^n\to \mathbb{R}$ is symmetric and convex and satisfies 
$$
\beta_1{\rm id}_{\mathbb{R}^n} \le \nabla^2 \psi \le \beta_2{\rm id}_{\mathbb{R}^n}
$$
for some $\beta_1\le 1\le \beta_2$. If 
$$
\int_{\mathbb{R}^n} \big| \frac{y}{\beta} - \nabla \psi(y) \big|^2 e^{ -\psi(y) }\, dx\le \varepsilon
$$
then 
$$
\int_{\mathbb{R}^n} \big| \nabla(\psi^*)(x) - \beta x \big|^2 e^{ -\frac{\beta_2}{2\beta_1^2}|x|^2 }\, dx \\
\le 
\beta^2 e^{\psi(0)} \beta_2^n \varepsilon.
$$

In addition, if $\psi$ is a squared norm i.e. $\psi(\lambda\cdot)=\lambda^2 \psi$, then 
$$
\int_{\mathbb{R}^n} \big| \nabla(\psi^*)(x) - \beta x \big|^2 e^{ -\psi^*(x) }\, dx \\
\le 
\beta^2 e^{\psi(0)-\psi^*(0)} \big(\frac{\beta_2}{\beta_1}\big)^{n+2}\beta_2^n \varepsilon.
$$
\end{lemma}

\begin{proof}
First note that the assumption on $\nabla^2\psi$ and the symmetry\footnote{The symmetry ensures $\nabla \psi(0)=0$.} imply that 
$$
\frac12 \beta_1 |x|^2 + \psi(0)\le \psi(x)\le \frac12 \beta_2 |x|^2 + \psi(0)
$$
and 
$$
\beta_1 |x| \le |\nabla \psi(x)| \le  \beta_2 |x|^2,
$$
see (3.1) in NT for instance. 
Secondly, note that 
$$
\psi^*( \nabla \psi(y) ) = \langle y, \nabla \psi(y) \rangle -\psi(y)
$$
which can be checked by identifying when $\sup_y \langle x, y\rangle - \psi(y)$ is achieved. 
With this in mind, from a change of variable $x = \nabla \psi(y)$, 
\begin{align*}
&\int_{\mathbb{R}^n} \big| \nabla(\psi^*)(x) - \beta x \big|^2 e^{ -\frac{\beta_2}{2\beta_1^2}|x|^2 }\, dx \\
&= 
\int_{\mathbb{R}^n} \big| \nabla(\psi^*)( \nabla \psi(y) ) - \beta \nabla \psi(y) \big|^2 e^{ -\frac{\beta_2}{2\beta_1^2}| \nabla \psi(y) |^2 }\, {\rm det}\, \nabla^2 \psi(x)dx.
\end{align*}
From vector calculus, 
\begin{align*}
&\nabla\bigg( \psi^*(\nabla \psi(\cdot))  \bigg)(y)
=
\nabla^2\psi(y) \big[ \nabla(\psi^*)( \nabla \psi(y) ) \big]\\
&\;\;\;\;\;\;\;\;\;\;\;\;\;
||\\
& \nabla\bigg(
\langle \cdot, \nabla \psi(\cdot) \rangle -\psi(\cdot)
\bigg)
=
\nabla \psi(y) + \big[ \nabla^2 \psi(y) \big](y) - \nabla\psi(y) = \big[ \nabla^2 \psi(y) \big](y).
\end{align*}
Since $\nabla^2 \psi$ is invertable, 
$$
\nabla(\psi^*)( \nabla \psi(y) ) = y.
$$
Hence 
\begin{align*}
&\int_{\mathbb{R}^n} \big| \nabla(\psi^*)(x) - \beta x \big|^2 e^{ -\frac{\beta_2}{2\beta_1^2}|x|^2 }\, dx \\
&= 
\beta^2
\int_{\mathbb{R}^n} \big| \frac{y}{\beta} - \nabla \psi(y) \big|^2 e^{ -\frac{\beta_2}{2\beta_1^2}| \nabla \psi(y) |^2 }\, {\rm det}\, \nabla^2 \psi(x)dx.
\end{align*}
We then use the assumption to see that 
$$
-\frac{\beta_2}{2\beta_1^2} |\nabla\psi(y)|^2 \le -\frac{\beta_2}{2}|y|^2 \le - \psi(y) + \psi(0)
$$
which yields that 
\begin{align*}
&\int_{\mathbb{R}^n} \big| \nabla(\psi^*)(x) - \beta x \big|^2 e^{ -\frac{\beta_2}{2\beta_1^2}|x|^2 }\, dx \\
&\le 
\beta^2 e^{\psi(0)} \beta_2^n 
\int_{\mathbb{R}^n} \big| \frac{y}{\beta} - \nabla \psi(y) \big|^2 e^{ -\psi(y) }\, dx
\le 
\beta^2 e^{\psi(0)} \beta_2^n \varepsilon.
\end{align*}

For the second claim, note that 
$$
\frac{1}{2\beta_2}|x|^2 + \psi^*(0) \le \psi^*(x) \le \frac{1}{2\beta_1}|x|^2 + \psi^*(0)
$$
and hence 
\begin{align*}
&\int_{\mathbb{R}^n} \big| \nabla(\psi^*)(x) - \beta x \big|^2 e^{ -\frac{\beta_2}{2\beta_1^2}|x|^2 }\, dx \\
&=
\big(\frac{\beta_1}{\beta_2}\big)^n 
\int_{\mathbb{R}^n} \big| \nabla(\psi^*)(\frac{\beta_1}{\beta_2}z) - \beta \frac{\beta_1}{\beta_2}z \big|^2 e^{ -\frac{1}{2\beta_2}|z|^2 }\, dx \\
&\ge 
\big(\frac{\beta_1}{\beta_2}\big)^n 
\int_{\mathbb{R}^n} \big| \nabla(\psi^*)(\frac{\beta_1}{\beta_2}z) - \beta \frac{\beta_1}{\beta_2}z \big|^2 e^{ -\psi^*(z) +\psi^*(0) }\, dx \\
\end{align*}
In the case that $\psi = \frac12 \|\cdot\|_K^2$ for some convex body $K$ then $\nabla(\psi^*)(\lambda x) = \lambda \nabla(\psi^*)(x) $ and so 
\begin{align*}
&\int_{\mathbb{R}^n} \big| \nabla(\psi^*)(x) - \beta x \big|^2 e^{ -\frac{\beta_2}{2\beta_1^2}|x|^2 }\, dx \\
&\ge 
\big(\frac{\beta_1}{\beta_2}\big)^{n+2} e^{\psi^*(0)}
\int_{\mathbb{R}^n} \big| \nabla(\psi^*)(z) - \beta z \big|^2 e^{ -\psi^*(z)  }\, dx.
\end{align*}
Combining this with the first statement, we conclude the second statement. 

\end{proof}

===========================================================================
===========================================================================
\fi

We next correct consequences of the assumption on $\nabla^2(-\log\, f_0)$. 
\begin{lemma}\label{l:ElemtaryBounds}
Let $\lambda\le 1 \le \Lambda$. Suppose that an even function $f_0 \colon \mathbb{R}^n \to \mathbb{R}_+$ satisfies $\lambda \le \nabla^2(-\log\, f_0) \le \Lambda$. 
\begin{enumerate}
	\item 
	For every $t\ge0$, 
	\begin{equation}\label{e:HessPolar}
        \lambda \le \nabla^2(-\log\, f_t) \le \Lambda,
        \;
        \frac1{\Lambda} \le \nabla^2(-\log\, f_t^\circ) \le \frac1{\lambda}. 
	\end{equation}
	\item 
	For every $t\ge0$ and $x\in \mathbb{R}^n$, 
	\begin{align}\label{e:f_tPointBdd}
		f_t(0) e^{ -\frac{\Lambda}2|x|^2 }\le f_t(x) \le f_t(0) e^{ -\frac{\lambda}2|x|^2 },\; 
		f_t^\circ(0) e^{ -\frac{1}{2\lambda}|x|^2 }\le f_t^\circ(x) \le f_t^\circ(0) e^{ -\frac{1}{2 \Lambda }|x|^2 },
	\end{align}
	and 
	\begin{equation}\label{e:Nablaf_t}
		\lambda|x|\le |\nabla (-\log\, f_t)(x)| \le \Lambda|x|,\; 
		\frac1{\Lambda}|x|\le |\nabla(-\log\, f_t^\circ)(x)| \le \frac1{\lambda}|x|.
	\end{equation}
	\item 
	For every $t\ge0$, 
	\begin{align}\label{e:f_t(0)Bdd}
		f_0(0) \Lambda^{ -\frac{n}2 } \le f_t(0)\le f_0(0) \lambda^{ -\frac{n}2 },\;
		\frac1{f_0(0)} \lambda^{ \frac{n}2 } \le f_t^\circ(0)\le \frac1{f_0(0)} \Lambda^{ \frac{n}2 }.
	\end{align}
	\item 
	For every $t\ge0$, 
	\begin{equation}\label{e:VarBdd}
		\lambda \le B^\circ_t:= \int_{\mathbb{R}^n} x\otimes x \frac{f_t^\circ(x)}{ m( f_t^\circ ) }\, dx \le \Lambda. 
	\end{equation}
\end{enumerate}
\end{lemma}

\begin{proof}
The first part in \eqref{e:HessPolar} follows from Lemma \ref{l:Preservation} and the second part \footnote{We remark that $f_t^\circ$ is $C^2$ for all $t \ge 0$ since $f_t$ is $C^2$ and strictly log-concave from the first part.} follows from Lemma \ref{Cor:HessUpLowBound} with $s\downarrow 0$. 
To see \eqref{e:f_tPointBdd} and \eqref{e:Nablaf_t}, it suffices to use  \eqref{e:HessPolar}, Taylor expansion, and $\nabla f_t(0) = \nabla f_t^\circ = 0$ which are consequences of the eveness of $f_0$; see also \cite[(3.1)]{NT} for the detailed argument. 
The first part of \eqref{e:f_t(0)Bdd} follows from the explicit integral expression of $f_t = P_t^*f_0$ and \eqref{e:f_tPointBdd} at $t=0$. The second part of \eqref{e:f_t(0)Bdd} follows from $f_t^\circ(0) = \frac1{f_t(0)}$. 
Finally, by following argument in \cite[(1.18), (1.19)]{BNT} together with \eqref{e:HessPolar}, we obtain \eqref{e:VarBdd}. 
\end{proof}

From an intuition of $K^\circ$, it is reasonable to expect that, if $e^{-\psi^*}$ is close to some centered Gaussian then $e^{-\psi}$ must be close to some centered Gaussian too. 
We confirm this intuition under the regularity constraint on $\psi$. 
\begin{lemma}\label{l:RemovePolar}
Let $\psi :\mathbb{R}^n\to \mathbb{R}$ be an even convex $C^2$ function satisfying 
$
\frac1{\Lambda} \le \nabla^2 \psi \le \frac1{\lambda}
$
for some $0<\lambda\le 1\le \Lambda$. 
Then 
\begin{align*}
&\int_{\mathbb{R}^n} \big| \nabla(\psi^*)(x) - B x \big|^2 e^{ -\frac{\Lambda^2}{2\lambda}|x|^2 }\, dx \\
&\le 
\lambda_{\rm max}(B)^2 e^{\psi(0)} \lambda^{-n} \int_{\mathbb{R}^n} \big| B^{-1}y - \nabla \psi(y) \big|^2 e^{ -\psi(y) }\, dy
\end{align*}
for any symmetric positive definite matrix $B$. 
\end{lemma}

\begin{proof}
From \eqref{e:f_tPointBdd} and \eqref{e:Nablaf_t}, we have that 
$$
\frac12 \frac1{\Lambda} |x|^2 + \psi(0)\le \psi(x)\le \frac12 \frac1{\lambda} |x|^2 + \psi(0)
$$
and that 
$$
\frac1{\Lambda} |x| \le |\nabla \psi(x)| \le  \frac1{\lambda} |x|. 
$$
Secondly, note that 
$$
\psi^*( \nabla \psi(y) ) = \langle y, \nabla \psi(y) \rangle -\psi(y). 
$$
With this in mind, from the change of variables $x = \nabla \psi(y)$, we derive that 
\begin{align*}
&\int_{\mathbb{R}^n} \big| \nabla(\psi^*)(x) - B x \big|^2 e^{ -\frac{\Lambda^2}{2\lambda}|x|^2 }\, dx \\
&= 
\int_{\mathbb{R}^n} \big| \nabla(\psi^*)( \nabla \psi(y) ) - B \nabla \psi(y) \big|^2 e^{ -\frac{\Lambda^2}{2\lambda}| \nabla \psi(y) |^2 }\, {\rm det}\, \nabla^2 \psi(y)\,dy.
\end{align*}
Since $\nabla^2 \psi$ is invertible,  we have  $(\nabla \psi^*)( \nabla \psi(y) ) = y.$
Hence it follows that 
\begin{align*}
&\int_{\mathbb{R}^n} \big| \nabla(\psi^*)(x) - B x \big|^2 e^{ -\frac{\Lambda^2}{2\lambda}|x|^2 }\, dx \\
&\le 
\lambda_{\rm max}(B)^2
\int_{\mathbb{R}^n} \big| B^{-1} y - \nabla \psi(y) \big|^2 e^{ -\frac{\Lambda^2}{2\lambda}| \nabla \psi(y) |^2 }\, {\rm det}\, \nabla^2 \psi(y)\,dy.
\end{align*}
We then use the pointwise bounds of $\psi$ and $\nabla \psi$ to see that 
$$
-\frac{\Lambda^2}{2\lambda} |\nabla\psi(y)|^2 \le -\frac{1}{2\lambda}|y|^2 \le - \psi(y) + \psi(0), 
$$
which concludes that 
\begin{align*}
&\int_{\mathbb{R}^n} \big| \nabla(\psi^*)(x) - B x \big|^2 e^{ -\frac{\Lambda^2}{2\lambda}|x|^2 }\, dx \\
&\le 
\lambda_{\rm max}(B)^2 e^{\psi(0)} \lambda^{-n} 
\int_{\mathbb{R}^n} \big| B^{-1}y - \nabla \psi(y) \big|^2 e^{ -\psi(y) }\, dy.
\end{align*}
\end{proof}

\subsection{Proof of Theorem \ref{Prop:Stability30Sep}}\label{SS:3.2}
As a first remark, for the purpose of proving Theorem \ref{Prop:Stability30Sep}, we may assume that $\lambda \le 1 = {\frac1{\lambda^\circ}}$ thanks to the linear invariance of the functional volume product. 
In particular, we may employ Lemma \ref{l:ElemtaryBounds}. 
As a second remark, we know that $f_0$ has the Gaussian decay from \eqref{e:f_tPointBdd} and this is enough to justify the implicit uses of Lebesgue's convergence theorem and integration by parts in the following argument. 
It is worth to introduce the relative Fisher information defined as 
$$
J(h) := \int_{\mathbb{R}^n} \big| \nabla \log\, \gamma - \nabla \log\, \frac{h}{m(h)} \big|^2 \frac{h}{m(h)}\, dx,
$$
for a nonnegative $h$, see \cite{CarSof} for instance. 

\begin{lemma}\label{l:BeforeTimeBack}
Let $0 <\lambda^\circ \le \lambda^{-1}$. Then for any even $f_0 \in \mathcal{F}(\lambda,\lambda^\circ)$ and any $t_0>0$, we have that 
\begin{equation}\label{e:Surprise!}
\delta_{\rm BS}(f_0,f_{t_0}) \ge  (\lambda \lambda^\circ)^2 c \int_0^{t_0} J( \widetilde{f_t^\circ} )\, dt,  
\end{equation}
where $f_t^\circ:= \big(P_t^* f_0 \big)^\circ$, $B^{\circ}_t := \int_{\mathbb{R}^n} x\otimes x \frac{f_t^\circ(x)}{ m( f_t^\circ) }\, dx $,  and $c>0$ is a numerical constant. Also, $\widetilde{f_t^\circ}:= f_t^\circ \circ ( B_t^\circ )^\frac12$ whose covariance matrix is an identity. 
\end{lemma}
\begin{proof}
We may assume that $\lambda\le 1 = \frac1{\lambda^\circ}$ thanks to the linear invariance of $v$. Let us go back to the proof of Theorem \ref{t:MonoHC} where we observed that 
\begin{align*}
-\frac{p}{q} Q_s'(t) 
=& 
- \int_{\mathbb{R}^n} |x|^2 \frac{F_t(x)^q}{m(F_t^q)}\, dx \\
& -
(1-p) \int_{\mathbb{R}^n} \big( \int_{\mathbb{R}^n} \Delta \log\, f_t  e^{\frac1p\langle x,z\rangle} f_t(z)^\frac1p\, dz \big) \frac{F_t(x)^{q-1}}{m(F_t^q)}\, dx.
\end{align*}
We now apply the stability estimate of the Poincar\'{e}--Brascamp--Lieb inequality, Theorem \ref{t:Cordero17}, with $g(x):=x_i$ and $h:= F_t^q$ instead of \eqref{e:P-BL}. Then the same argument as in the proof of Theorem \ref{t:MonoHC} yields that 
\if0
================================================
================================================
In view of 
$\int_{\mathbb{R}^n} x_i \frac{F_t^q}{m(\int F_t^q) }\, dx = 0 $, we  now apply the stability of the Poincar\'{e}--Brascamp--Lieb inequality due to Cordero-Erausquin Theorem \ref{t:Cordero17} with $g(x):=x_i$ and $h:= F_t^q$ to see that 
\begin{align*}
\int_{\mathbb{R}^n} x_i^2 	\frac{F_t(x)^q}{ m( F_t^q ) }\, dx
\le& 
\int_{\mathbb{R}^n} \big\langle e_i, \big( -\nabla^2 \log\, F_t^q \big)^{-1} e_i \big\rangle \frac{F_t^q}{m( F_t^q )}\, dx\\
&- 
C^{(s)}(t)\int_{\mathbb{R}^n} \big| x_i - \langle \mathbf{b}^{(s)}_i(t), \nabla( - \log\, F_t^q )(x) \rangle \big|^2 \frac{F_t(x)^q}{m( F_t^q)}\, dx
\end{align*}
where 
$$
C^{(s)}(t):=
\frac{c\lambda(F_t^q)}{ \sup_{ x } \lambda_{\rm max}( -\nabla^2 \log\, F_t^q(x) ) + c\lambda(F_t^q) }
$$
and 
$$
\mathbf{b}_i^{(s)}(t):= \big( \int_{\mathbb{R}^n} y y_i \frac{F_t(y)^q}{ m( F_t^q) }\, dy \big) = \sum_{j=1}^n \beta^{(s)}_{i,j}( t ) e_j,\;\;\; 
\beta_{i,j}^{(s)}( t ):= \int_{\mathbb{R}^n} x_ix_j \frac{F_t(x)^q}{m( F_t^q)}\, dx.
$$
One should further notice that 
$$
x_i - \langle \mathbf{b}_i^{(s)}(t), \nabla( - \log\, F_t^q )(x) \rangle
=
\big\langle e_i, 
x -  B^{(s)}(t) \nabla( - \log\, F_t^q )(x)
\big\rangle 
$$
where $B^{(s)}(t) :=  (\beta^{(s)}_{i,j}(t))_{1\le i,j\le n}$.
Therefore, 
\begin{align*}
&\sum_{i=1}^n 
\int_{\mathbb{R}^n} \big| x_i - \langle \mathbf{b}_i^{(s)}(t), \nabla( - \log\, F_t^q )(x) \rangle \big|^2 \frac{F_t(x)^q}{m( F_t^q)}\, dx\\
&= 
\int_{\mathbb{R}^n} \big| x -  B^{(s)}(t) \nabla( - \log\, F_t^q )(x) \big|^2 \frac{F_t(x)^q}{m( F_t^q )}\, dx
\end{align*}
and hence from \eqref{e:LowerHess}, 
\begin{align*}
\int_{\mathbb{R}^n} |x|^2 	\frac{F_t(x)^q}{ m( F_t^q )}\, dx
\le& 
\int_{\mathbb{R}^n} {\rm Tr}\, \big( -\nabla^2 \log\, F_t^q \big)^{-1}  \frac{F_t^q}{m( F_t^q)}\, dx\\
&- 
C^{(s)}(t)
\int_{\mathbb{R}^n} \big| x -  B^{(s)}(t) \nabla( - \log\, F_t^q )(x) \big|^2 \frac{F_t(x)^q}{m( F_t^q)}\, dx\\
\le& 
-\frac{p}{q} \int_{\mathbb{R}^n} \big( \int_{\mathbb{R}^n} \Delta_z ( -\log\, f_t )(z) e^{ \frac1p\langle x,z\rangle } f_t(z)^\frac1p\, \frac{dz}{ F_t(x) } \big) \frac{F_t^q}{m( F_t^q)}\, dx\\
&- 
C^{(s)}(t)  
\int_{\mathbb{R}^n} \big| x -  B^{(s)}(t) \nabla( - \log\, F_t^q )(x) \big|^2 \frac{F_t(x)^q}{m( F_t^q)}\, dx.
\end{align*}
This together with $ \frac{p}q = p-1 $ yields that 
================================================
================================================
\fi
\begin{align*}
Q_s'(t) \ge c(F_t^q)
\int_{\mathbb{R}^n} \big| x -  B^{s}_t \nabla( - \log\, F_t^q )(x) \big|^2 \frac{F_t(x)^q}{m( F_t^q)}\, dx,
\end{align*}
where 
$B^{s}_t :=  \int_{\mathbb{R}^n}x\otimes x \, \frac{F_t^q}{m(F_t^q)}dx. $
To estimate $c(F_t^q)$ we make use of the assumption $\lambda \le \nabla^2(-\log\, f_0) \le 1$.
In fact, Lemma \ref{Cor:HessUpLowBound} ensures that $ -\frac{q}{p} \le \nabla^2 (-\log\, F_t^q) \le -\frac{q}{p} \frac1{\lambda} $ and hence $c(F_t^q) \ge c \lambda$ thanks to \eqref{e:Lowerc(h)}.
This shows that 
\begin{align*}
Q_s'(t) \ge c \lambda 
\int_{\mathbb{R}^n} \big| x -  B^{s}_t \nabla( - \log\, F_t^q )(x) \big|^2 \frac{F_t(x)^q}{m( F_t^q)}\, dx.
\end{align*}
By integrating this in $t\in [0,t_0]$,  it follows that 
$$
Q_s(t_0) - Q_s(0) \ge 
c\lambda \int_0^{t_0} \int_{\mathbb{R}^n} \big| x -  B^{s}_t \nabla( - \log\, F_t^q )(x) \big|^2 \frac{F_t(x)^q}{m( F_t^q)}\, dx  dt.
$$
At this stage, we take $s\to0$ and use \eqref{e:CreatPolar} to obtain 
\begin{align}\label{e:Nov26-1}
Q_0(t_0) - Q_0(0) \ge c \lambda
\int_0^{t_0}
\int_{\mathbb{R}^n} \big| x -  B^\circ_t \nabla( - \log\, f_t^\circ )(x ) \big|^2 \frac{f_t^\circ(x)}{m( f_t^\circ )}\, dxdt ,
\end{align}
where  $B^{\circ}_t := \int_{\mathbb{R}^n} x\otimes x \frac{f_t^\circ(x)}{ m( f_t^\circ) }\, dx $. 
For the right-hand side, we do the change of variables $x = (B_t^\circ)^\frac12 y$ and use \eqref{e:VarBdd} to derive that 
\begin{align*}
Q_0(t_0) - Q_0(0) 
&\ge 
c \lambda^2 
\int_0^{t_0}
\int_{\mathbb{R}^n} \big| y -  \nabla( - \log\, \widetilde{f_t^\circ} )( y ) \big|^2 \frac{\widetilde{f_t^\circ}(y)}{m( f_t^\circ )}\, dxdt \times ({\rm det}\, B_t^\circ)^\frac12\\
&= 
c \lambda^2 
\int_0^{t_0}
J(\widetilde{f_t^\circ})\, dt \times \frac{m( \widetilde{f_t^\circ} )}{m( {f_t^\circ} )}({\rm det}\, B_t^\circ)^\frac12, 
\end{align*}
which concludes the result because $ Q_0(t_0) - Q_0(0) = \delta_{\rm BS}(f_0,f_{t_0})$ and $m( \widetilde{f_t^\circ} ) = m(f_t^\circ) ({\rm det}\, B_t^\circ)^{-\frac12}
$. 
\end{proof}

We remark that if we do not do the change of variables in the above, then \eqref{e:Nov26-1} and \eqref{e:VarBdd} imply that  
\begin{align}\label{e:Step1Stab}
\delta_{\rm BS}(f_0,f_{t_0}) \ge c\lambda^3 \int_0^{t_0}
\int_{\mathbb{R}^n} \big| (B^\circ_t)^{-1} x -  \nabla( - \log\, f_t^\circ)(x)  \big|^2 \frac{f_t^\circ(x)}{m( f_t^\circ)}\, dxdt, 
\end{align}
rather than \eqref{e:Surprise!}. For some technical reasons, we will invoke \eqref{e:Step1Stab} in the proof of Theorem \ref{Prop:Stability30Sep}. 

Before giving a detailed proof of Theorem \ref{Prop:Stability30Sep}, let us first explain our idea with some heuristic argument. For the time being, let us assume that $f_0$ is isotropic, instead of assuming $\lambda^\circ =1$, and that  $m(f_0)=1$. Then $f_t$ is also isotropic and so $\widetilde{f_t}=f_t$. 
With this in mind, let us focus on \eqref{e:Surprise!}. If one could remove the polar operation and replace $J(\widetilde{f_t^\circ})$ by $J(\widetilde{f_t}) = J(f_t)$, then the right-hand side of \eqref{e:Surprise!} would be estimated by 
\begin{align}\label{e:Heuri1}
(\lambda \lambda^\circ)^2 c \int_0^{t_0} J(f_{t})\, dt 
&=
(\lambda \lambda^\circ)^2 c
\big(
{\rm Ent}\, (f_{t_0})
-
{\rm Ent}\, (f_0)
\big)\\
&\ge 
(\lambda \lambda^\circ)^2 c (1-e^{-2t_0})
\big(
{\rm Ent}\, (\gamma) - {\rm Ent}\, (f_0)
\big). \nonumber
\end{align}
Here we used the de Bruijn's identity, see \cite[(1.23)]{CarSof} for instance, and the exponential decay in entropy. 
By using the Pinsker--Csisz\'{a}r--Kullback inequality further, one  would obtain an ideal inequality 
\begin{equation}\label{e:v.p.jump-Dream}
    \delta_{\rm BS} (f_0,f_{t_0}) \ge (\lambda \lambda^\circ)^2 c \| \gamma-f_0\|_{L^1(dx)}^2, 
\end{equation}
for all isotropic $f_0 \in \mathcal{F}(\lambda,\lambda^\circ)$ with $m(f_0)=1$, and for a fixed $t_0 \ge1$.  
In order to make this idea rigorous, we first appeal to Lemma \ref{l:RemovePolar} with $\psi = -\log\, f_t^\circ$. This lemma enables us to remove the polar operation from $f_t^\circ$, but at the same time, it changes the reference measure $e^{-\psi}$ to some Gaussian. Particularly, we lose the expression in terms of the relative Fisher information. Because of this, we take another route rather than the use of de Bruijn's identity. Namely, we borrow the idea of Fathi \cite{Fathi21} which appeals to the Gaussian Poincar\'{e} inequality.

\begin{proof}[Proof of Theorem \ref{Prop:Stability30Sep}]
As we mentioned, we may assume $\lambda^\circ =1$. 
We are going to prove that there exists some $C^*(n,\lambda)>0$ such that, for any $t_0>0$, the following holds: if $f_0\in\mathcal{F}(\lambda,1)$ satisfies 
\begin{equation}\label{e:Assump30Nov}
    \inf_{m,B} \int_{\mathbb{R}^n} \big| \gamma(x) - \frac{f_0(Bx)}{m} \big|^2 e^{-\frac12(\frac1{\lambda^2} -1)|x|^2}\, dx \ge 10 C^*(n,\lambda) t_0,
\end{equation}
then 
\begin{align}\label{e:Goal25Nov}
C(n,\lambda)t_0 \inf_{m,B} \int_{\mathbb{R}^n} \big| \gamma(x) - \frac{f_0(Bx)}{m} \big|^2 e^{-\frac12(\frac1{\lambda^2} -1)|x|^2}\, dx 
\le \delta_{\rm BS}(f_0,f_{t_0})
\end{align}
holds for some $C(n,\lambda)$. 
Once we could prove this claim, we may conclude Theorem \ref{Prop:Stability30Sep} as follows. Fix an arbitrary $t>0$. If $f_0$ satisfies \eqref{e:Assump30Nov} with $t_0 = t$, then \eqref{e:Goal25Nov} yields the conclusion. If not, it follows that 
$$
t\ge t(f_0):= \frac1{10C^*(n,\lambda)} \inf_{m,B} \int_{\mathbb{R}^n} \big| \gamma(x) - \frac{f_0(Bx)}{m} \big|^2 e^{-\frac12(\frac1{\lambda^2} -1)|x|^2}\, dx. 
$$ 
Thus we may use Theorem \ref{t:MonoVP} to see that $\delta_{\rm BS}(f_0,f_{t}) \ge \delta_{\rm BS}(f_0,f_{t(f_0)})$. We then notice that $f_0$ satisfies \eqref{e:Assump30Nov} with $t_0 = t(f_0)$ as an equality, and hence we may apply \eqref{e:Goal25Nov} to see that 
$$
\frac{C(n,\lambda)}{10C^*(n,\lambda)} \bigg( \inf_{m,B} \int_{\mathbb{R}^n} \big| \gamma(x) - \frac{f_0(Bx)}{m} \big|^2 e^{-\frac12(\frac1{\lambda^2} -1)|x|^2}\, dx \bigg)^2
\le \delta_{\rm BS}(f_0,f_{t(f_0)}),
$$
which concludes the proof. 


Let us fix $t_0>0$, and prove \eqref{e:Goal25Nov} under the assumption \eqref{e:Assump30Nov}. 
In view of \eqref{e:HessPolar} and \eqref{e:Step1Stab}, we apply Lemma \ref{l:RemovePolar} with $B = B^\circ_t$ and $\psi = -\log\, f_t^\circ$ to see that 
\begin{align*}
\delta_{\rm BS}(f_0,f_{t_0})
&\ge 
C(n,\lambda)
\int_0^{t_0}
\int_{\mathbb{R}^n} \big| B^\circ_t x -  \nabla( - \log\, f_t(x) ) \big|^2 e^{- \frac{1}{2\lambda} |x|^2}\, dxdt
\end{align*}
where we used \eqref{e:f_tPointBdd} and \eqref{e:f_t(0)Bdd} to see $\frac{f_t^\circ(0)}{ m( f_t^\circ ) } \ge C(n,\lambda)$.  
From \eqref{e:f_tPointBdd} we know that $ f_t(x) \le f_0(0) \lambda^{-\frac{n}{2}} e^{ - \frac{\lambda}2|x|^2 }$ and hence 
\begin{align*}
\delta_{\rm BS}(f_0,f_{t_0}) 
&\ge 
\frac{C(n,\lambda)}{ f_0(0) } \int_0^{t_0} \int_{\mathbb{R}^n} \big| B^\circ_tx - \nabla(-\log\, f_t)(x) \big|^2 f_t(x)e^{\frac{\lambda}2|x|^2}e^{ - \frac{1}{2\lambda} |x|^2 }\, dxdt \\
&= 
\frac{C(n,\lambda)}{ f_0(0) } \int_0^{t_0} \int_{\mathbb{R}^n} \big| f_t(x)^\frac12 B^\circ_t x + \frac{\nabla f_t(x)}{f_t(x)^\frac12} \big|^2  e^{ - \frac{\lambda_0}{2} |x|^2 }\, dx dt, 
\end{align*}
where 
$
\lambda_0:= \frac{1}{\lambda} - \lambda>0. 
$
We then employ the idea of the argument by Fathi \cite{Fathi21} to see that 
\begin{align*}
f_t(x)^\frac12 B^\circ_tx + \frac{\nabla f_t(x)}{f_t(x)^\frac12}
= 
2 \nabla \big( \gamma_{(B^\circ_t)^{-1}}^{-\frac12} f_t^\frac12 \big) (x) \gamma_{(B^\circ_t)^{-1}}(x)^{\frac12}, 
\end{align*}
from which it follows that 
\begin{align*}
\delta_{\rm BS}(f_0,f_{t_0})
&\ge 
\frac{C(n,\lambda)}{f_0(0)} \int_0^{t_0} \int_{\mathbb{R}^n} \big| \nabla \big( \gamma_{(B^\circ_t)^{-1}}^{-\frac12} f_t^\frac12 \big) (x) \big|^2  \gamma_{D_t}(x)\, dx dt, 
\end{align*}
where 
$
D_t^{-1}:= B^\circ_t + \lambda_0{\rm id}>0.
$
By applying the Gaussian Poincar\'{e} inequality, we obtain that 
\begin{align*}
&\delta_{\rm BS}(f_0,f_{t_0}) \\
&\ge 
\frac{C(n,\lambda)}{f_0(0)}
\int_0^{t_0}
\bigg(
\int_{\mathbb{R}^n} f_t \gamma_{(B^\circ_t)^{-1}}^{-1} \gamma_{D_t}\, dx 
-
\big(
\int_{\mathbb{R}^n} \big[ f_t \gamma_{(B^\circ_t)^{-1}}^{-1} \big]^\frac12  \gamma_{D_t}\, dx 
\big)^2
\bigg)\, dt\\
&=
\frac{C(n,\lambda)}{f_0(0)}
\int_0^{t_0}
\big(\int_{\mathbb{R}^n} f_t(x) e^{-\frac{\lambda_0}{2}|x|^2}\, dx\big)
\bigg(
1 
+
\big(
\int_{\mathbb{R}^n} \big[ \frac{f_t \gamma_{(B^\circ_t)^{-1}}^{-1}}{\int_{\mathbb{R}^n} f_t \gamma_{(B^\circ_t)^{-1}}^{-1}\,  d\gamma_{D_t}} \big]^\frac12  \gamma_{D_t}\, dx 
\big)
\bigg)\\
&\qquad \times 
\bigg(
1 
-
\big(
\int_{\mathbb{R}^n} \big[ \frac{f_t \gamma_{(B^\circ_t)^{-1}}^{-1}}{\int_{\mathbb{R}^n} f_t \gamma_{(B^\circ_t)^{-1}}^{-1}\,  d\gamma_{D_t}} \big]^\frac12  \gamma_{D_t}\, dx 
\big)
\bigg)\, dt\\
&\ge 
C(n,\lambda) \int_0^{t_0}
\bigg(
\int_{\mathbb{R}^n} 
\big|
1
-
\big[ \frac{f_t \gamma_{(B^\circ_t)^{-1}}^{-1}}{ \int_{\mathbb{R}^n} f_t \gamma_{(B^\circ_t)^{-1}}^{-1}\,  d\gamma_{D_t}} \big]^\frac12  \big|^2\gamma_{D_t}\, dx 
\bigg)\, dt, 
\end{align*}
where we used $ \int_{\mathbb{R}^n} f_t(x) e^{-\frac{\lambda_0}{2}|x|^2}\, dx \ge C(n,\lambda) f_0(0)$ which is a consequence of  \eqref{e:f_tPointBdd} and \eqref{e:f_t(0)Bdd}.
We next do the change of variables $ x = (B^\circ_t)^{-\frac12} y $ in which case we have that 
\begin{align*}
\gamma_{D_t}( (B^\circ_t)^{-\frac12} y )
&\ge 
C(n,\lambda) 
\gamma(y) e^{ -\frac12 (\frac{1}{\lambda^2} -1 ) |y|^2},  
\end{align*}
where we used $(B^\circ_t)^{-1} \le \lambda^{-1}$ which is a consequence of  \eqref{e:VarBdd}. 
Hence, by denoting $\mu(f_t):= \int_{\mathbb{R}^n} f_t \gamma_{(B^\circ_t)^{-1}}^{-1}\,  d\gamma_{D_t}$, it follows that 
\begin{align*}
\delta_{\rm BS}(f_0,f_{t_0}) 
\ge 
C(n,\lambda) \int_0^{t_0}
\int_{\mathbb{R}^n} 
\big|
\gamma(y)^\frac12
-
\big[ \frac{f_t( (B^\circ_t)^{-\frac12}y )}{\mu(f_t)} \big]^\frac12  \big|^2  e^{ -\frac12 (\frac{1}{\lambda^2} -1 ) |y|^2} \, dy dt.  
\end{align*}
We notice from \eqref{e:f_tPointBdd} and \eqref{e:f_t(0)Bdd} that 
\begin{equation}\label{e:LinftyBound}
\gamma(y)^\frac12
+ 
\big[ \frac{f_t( (B^\circ_t)^{-\frac12}y )}{\mu(f_t)} \big]^\frac12 
\le C(n,\lambda)
\end{equation}
and hence 
\begin{align}\label{e:Nov25-1}
\delta_{\rm BS}(f_0,f_{t_0})
&\ge 
C(n,\lambda) 
\int_0^{t_0} 
\int_{\mathbb{R}^n} 
\big|
\gamma(y)
-
\frac{f_t( (B^\circ_t)^{-\frac12}y )}{\mu(f_t)}   \big|^2  
e^{ -\frac12 (\frac{1}{\lambda^2} -1 ) |y|^2} \, dy dt.
\end{align}
At this stage, we employ the estimate of the difference  $f_t- f_0$ as follows: 
\begin{align*}
\int_{\mathbb{R}^n} | f_t - f_0 |\, dx
&= 
\int_{\mathbb{R}^n} \big| \int_{0}^t  \mathcal{L}^*  f_\tau \, d\tau \big|\, dx \\
&\le  
\int_{0}^t
\int_{\mathbb{R}^n} \big(  | \Delta \log\, f_\tau | + |\nabla \log\, f_\tau|^2 + |x| |\nabla \log\, f_\tau| + n \big) f_\tau\, dxd\tau \\
&\le 
C(n,\lambda) 
t f_0(0), 
\end{align*}
where we used \eqref{e:HessPolar}, \eqref{e:Nablaf_t} and then \eqref{e:VarBdd} and \eqref{e:f_tPointBdd}. 
This together with \eqref{e:f_tPointBdd}, \eqref{e:f_t(0)Bdd}, \eqref{e:VarBdd} and \eqref{e:LinftyBound} yields that 
\begin{align*}
&\int_{\mathbb{R}^n} 
\big|
\frac{f_t( (B^\circ_t)^{-\frac12}y )}{\mu(f_t)} 
-
\frac{f_0( (B^\circ_t)^{-\frac12}y )}{\mu(f_t)} 
\big|^2  
e^{ -\frac12 (\frac{1}{\lambda^2} -1 ) |y|^2} \, dy \\
&\le 
C(n,\lambda)
\frac1{{\mu(f_t)} }
\int_{\mathbb{R}^n} 
\big|
f_t( (B^\circ_t)^{-\frac12}y )
-
f_0( (B^\circ_t)^{-\frac12}y ) 
\big|
e^{ -\frac12 (\frac{1}{\lambda^2} -1 ) |y|^2} \, dy \\
&\le 
C(n,\lambda)
\frac1{f_0(0)}
\int_{\mathbb{R}^n} 
\big|
f_t 
-
f_0 
\big|\, dx
\le 
{C(n,\lambda) t. }
\end{align*}
Therefore, by denoting the $C(n,\lambda)$ in the most right-hand side as $C^*(n,\lambda)$, we obtain that  
\begin{align}\label{e:CloseEnough}
&\int_{\mathbb{R}^n} 
\big|
\frac{f_t( (B^\circ_t)^{-\frac12}y )}{\mu(f_t)} 
-
\frac{f_0( (B^\circ_t)^{-\frac12}y )}{\mu(f_t)} 
\big|^2  
e^{ -\frac12 (\frac{1}{\lambda^2} -1 ) |y|^2} \, dy 
\le 
C^*(n,\lambda) {t \le C^*(n,\lambda) t_0}
\end{align}
{uniformly in $t\in [0,t_0]$.} Our remaining task is to show \eqref{e:Goal25Nov} for $f_0$ satisfying \eqref{e:Assump30Nov} with such a choice of $C^*(n,\lambda)$. 
Under \eqref{e:Assump30Nov}, it follows from \eqref{e:CloseEnough} and the triangle inequality for $L^2(dy)$ that 
\begin{align*}
&\int_{\mathbb{R}^n} 
\big|
\gamma(y)
-
\frac{f_t( (B^\circ_t)^{-\frac12}y )}{\mu(f_t)}   \big|^2  
e^{ -\frac12 (\frac{1}{\lambda^2} -1 ) |y|^2} \, dy\\
&\ge 
\bigg(
\big(
\int_{\mathbb{R}^n} 
\big|
\gamma(y)
-
\frac{f_0( (B^\circ_t)^{-\frac12}y )}{\mu(f_t)}
 \big|^2  
e^{ -\frac12 (\frac{1}{\lambda^2} -1 ) |y|^2} \, dy
\big)^\frac12 \\
&\qquad -
\big(
\int_{\mathbb{R}^n} 
\big|
\frac{f_0( (B^\circ_t)^{-\frac12}y )}{\mu(f_t)}
-
\frac{f_t( (B^\circ_t)^{-\frac12}y )}{\mu(f_t)}
 \big|^2  
e^{ -\frac12 (\frac{1}{\lambda^2} -1 ) |y|^2} \, dy
\big)^\frac12 
\bigg)^2 \\
&\ge 
\frac14
\inf_{m,B} 
\int_{\mathbb{R}^n} 
\big|
\gamma(y)
-
\frac{f_0( By )}{m}
 \big|^2  
e^{ -\frac12 (\frac{1}{\lambda^2} -1 ) |y|^2} \, dy, 
\end{align*}
uniformly in $t\in [0,t_0]$. 
Thus this together with \eqref{e:Nov25-1} implies that 
\begin{align*}
\delta_{\rm BS}( f_0, f_{t_0} )
&\ge 
C(n,\lambda) t_0 
\inf_{m,B} 
\int_{\mathbb{R}^n} 
\big|
\gamma(y)
-
\frac{f_0( By )}{m}
 \big|^2  
e^{ -\frac12 (\frac{1}{\lambda^2} -1 ) |y|^2} \, dy,
\end{align*}
which concludes \eqref{e:Goal25Nov}. 
\end{proof}
\section{Stability estimates for the functional Blaschke--Santal\'{o} inequality: Barthe--B\"{o}r\"{o}czky--Fradelizi's conjecture}\label{Sec4} 
Once the case of equality for some geometric or functional inequality is identified, the next natural question to be asked is its stability.  Namely, what can one say about near extremizer? 
Recently, stability estimates about several geometric or functional inequalities have been obtained. We refer the reader to recent papers \cite{BFR,FHT} and the survey paper \cite{Boro22}  for recent developments of this topic. 
For the functional Blaschke--Santal\'{o} inequality, the case of equality  was identified in Theorem \ref{t:FBS}, and it appears only for centered Gaussians. 
Thus we are lead to the following question: if $f$ achieves near equality in \eqref{e:FBS} then does it follow that $f$ must be close to some centered Gaussian in an appropriate distance? 
For the classical Blaschke--Santal\'{o} inequality, stability estimates were obtained by B\"{o}r\"{o}czky \cite{Boro09} and Ball--B\"{o}r\"{o}czky \cite{BaBoMon11} by establishing some stability estimates for the multiplicative Pr\'{e}kopa--Leindler inequality. 
Pushing forward the idea, Barthe--B\"{o}r\"{o}czky--Fradelizi \cite{BBF} established the following stability estimate for the functional Blaschke--Santal\'{o} inequality. 
\begin{theorem}[Barthe--B\"{o}r\"{o}czky--Fradelizi \cite{BBF}]
	There exists $\varepsilon_0 = \varepsilon_0(n)>0$ such that if $f_0 \colon \mathbb{R}^n \to \mathbb{R}_+$ is even log-concave and satisfies 
	$$
	\frac{v(\gamma)}{ v(f_0)} < \frac1{1-\varepsilon}
	$$
	for some $\varepsilon\in (0,\varepsilon_0)$ then 
	\begin{equation}\label{e:BBFIneq}
	\inf_{B,\mu} \int_{|x|\le {R(\varepsilon)}} \big| \frac12|x|^2 - (-\log\, f_0)(Bx) + \mu \big|\, dx < C(n) \varepsilon^\frac{1}{129 n^2}. 
	\end{equation}
	Here $R(\varepsilon)$ depends on $n$ and satisfies $\lim_{\varepsilon\to0}R(\varepsilon) = +\infty$. Also the infimum is taken over all positive definite matrices  $B$ and $\mu \in [0,\infty)$. 
\end{theorem}
We remark that Barthe--B\"{o}r\"{o}czky--Fradelizi \cite{BBF} established stability estimates for more general functional inequalities due to Lehec \cite{LehecDirect}. 
A challenging problem  here is to clarify whether the power of the deficit in \eqref{e:BBFIneq} must depend on the dimension or not. 
In fact, Barthe--B\"{o}r\"{o}czky--Fradelizi \cite{BBF} conjectured that the power $\frac{1}{129n^2}$ may be replaced by some absolute constant. 
We confirm this conjecture for even and uniformly log-concave inputs as a consequence of Theorem \ref{Prop:Stability30Sep}. 
It is beneficial for us to use the gap $\delta_{\rm BS}$ as a deficit of the functional Blaschke--Santal\'{o} inequality: 
$$
\delta_{\rm BS}(f_0,\gamma) = \log\, \big( \frac{v(\gamma)}{v(f_0)} \big). 
$$
This is becasue if $\frac{v(\gamma)}{v(f_0)} < \frac{1}{1-\varepsilon}$ then 
$
\delta_{\rm BS}(f_0,\gamma) < \log (1+\varepsilon+ O(\varepsilon^2) ) \sim \varepsilon.
$

\begin{theorem}\label{t:Stability30Sep}
	Let $0<\lambda^\circ \le\lambda^{-1}$. Then there exists a small $\varepsilon_0 = \varepsilon_0(n, \lambda \lambda^\circ)\ll1$ such that the following holds. 
    If $f_0 \in \mathcal{F}(\lambda,\lambda^\circ)$ is even and satisfies 
    $$
    \delta_{\rm BS}(f_0,\gamma)\le \varepsilon 
    $$
    for some $\varepsilon \in (0,\varepsilon_0)$, then 
	\begin{equation}\label{e:StabMain}
	\inf_{\mu,B} 
	\int_{ |x|\le {R(\varepsilon)} }
	\big|
	\frac12|x|^2 - ( -\log\, f_0)(Bx) +\mu
	\big|\, dx 
	\le C(n,\lambda \lambda^\circ)\varepsilon^{\frac1{5}},
	\end{equation}
	where $R(\varepsilon)\gg1$ is depending on $\lambda \lambda^\circ$  and satisfies $\lim_{\varepsilon\to0}R(\varepsilon) = +\infty$. Also the infimum is taken over all positive definite matrices $B$ and $\mu \in [0,\infty)$. 
\end{theorem}
Few remarks are in order. 
Firstly, our choice of $R(\varepsilon)$ is $\frac1{100}\lambda \lambda^\circ (\log\, \frac1{\varepsilon})^\frac12$ while the one due to Barthe--B\"{o}r\"{o}czky--Fradelizi \cite{BBF} is $\le \frac1{8n}(\log\, \frac1{\varepsilon})^\frac12$. Hence the order of $R(\varepsilon)$ in Theorem \ref{t:Stability30Sep} is the same as the one in \cite{BBF}. 
Secondly, one can replace $f_0$ by $f_0^\circ$ in \eqref{e:StabMain} since we know that $\delta_{\rm BS}(f_0,\gamma) = \delta_{\rm BS}(f_0^\circ,\gamma)$ and  that $f_0 \in \mathcal{F}(\lambda,\lambda^\circ)$ if and only if $f_0^\circ \in \mathcal{F}(\lambda^\circ,\lambda)$.
Thirdly, one cannot derive Theorem \ref{t:Stability30Sep} by simply adapting the argument in \cite{BBF} (at least in an obvious way). This is because the main tool in the argument in \cite{BBF}  is  stability estimates for the classical Blaschke--Santal\'{o} inequality by Ball--B\"{o}r\"{o}czky   \cite{BaBoMon11}, and it is the estimates that (necessarily) depends on the dimension.  
\begin{proof}
As in the proof of Theorem \ref{Prop:Stability30Sep}, thanks to the scaling argument, we may  assume $\lambda \le 1 = \frac1{{\lambda^\circ}}$.  Let us abbreviate $\varepsilon_0(n,\lambda)=\varepsilon_0$ where $\varepsilon_0(n,\lambda)$ will be determined in the end. Suppose $f_0 \in \mathcal{F}(\lambda,1)$ and $\exists \varepsilon \in (0,\varepsilon_0)$ s.t. $\delta_{\rm BS}(f_0,\gamma)\le \varepsilon$. 
From Corollary \ref{Cor:Stability30Sep}, we have that 
$$
\int_{\mathbb{R}^n} \big| \gamma(x) - \frac{{f_0}(Bx)}{m} \big|^2 e^{ -\frac12 ( \frac{{1}}{{\lambda^2}} -1 ) |x|^2}\, dx \le C(n,\lambda) \delta_{\rm BS}( {f_0},\gamma )^{\frac12}
$$
for some $m,B$. Since $\delta_{\rm BS}( f_0\circ B,\gamma ) = \delta_{\rm BS}(f_0,\gamma)$, if we write $ f_0\circ B = e^{-\phi} $ then 
$$
\int_{\mathbb{R}^n} \big| 1 - e^{ \frac12|x|^2 - \phi(x) + \mu } \big|^2 e^{ -\frac12 ( \frac{{1}}{{\lambda^2}} + 1 ) |x|^2}\, dx \le C(n,{\lambda}) \delta_{\rm BS}( e^{-\phi},\gamma )^\frac12 
\le 
C(n,{\lambda}) \varepsilon^\frac12
$$
for some constant $\mu\in \mathbb{R}$.
We then choose 
$$
R(\varepsilon):= \frac1{10}\big( \frac1{100} \log\, \frac1{\varepsilon}\big)^\frac12 {\lambda}
$$
so that 
$$
e^{ -\frac12 ( \frac{1}{\lambda^2} +1 ) (10R(\varepsilon))^2 }
= 
\varepsilon^{ \frac1{200} ( 1 + \lambda^2  ) }
\ge 
\varepsilon^{ \frac1{100} },
$$
where we used $\varepsilon\le 1$ and $\lambda\le 1$. 
Hence it follows that 
\begin{align*}
	C(n,{\lambda}) \varepsilon^{\frac12}
	&\ge 
	\int_{B_{10R(\varepsilon)}} \big| 1 - e^{ \frac12|x|^2 - \phi(x) + \mu } \big|^2 e^{ -\frac12 ( \frac{{1}}{{\lambda^2}} + 1 ) |x|^2}\, dx\\
	&\ge 
	\varepsilon^{ \frac1{100} }
	\int_{B_{10R(\varepsilon)}} \big| 1 - e^{ \frac12|x|^2 - \phi(x) + \mu } \big|^2 \, dx, 
\end{align*}
which yields that 
$$
C(n,{\lambda}) \varepsilon^{(\frac12 -\frac1{100})} \ge \int_{B_{10 R(\varepsilon)}} \big| 1 - e^{ \frac12|x|^2 - \phi(x) + \mu } \big|^2 \, dx. 
$$
By the Cauchy--Schwarz inequality,  we have that 
\begin{align}\label{e:AfterCS}
	\int_{B_{10 R(\varepsilon)}} \big| 1 - e^{ \frac12|x|^2 - \phi(x) + \mu } \big| \, dx 
	&\le 
	C(n) R(\varepsilon)^\frac{n}2 \big( \int_{B_{10 R(\varepsilon)}} \big| 1 - e^{ \frac12|x|^2 - \phi(x) + \mu } \big|^2 \, dx\big)^\frac12 \nonumber \\
	&\le 
	C(n,{\lambda}) \varepsilon^{-\frac1{100}} \varepsilon^{ ({\frac14} - \frac1{200}) }, 
\end{align}
where we used $ \log\, \frac{1}{\varepsilon} \le C(n) \varepsilon^{- \frac{1}{100n}} $ for $\varepsilon\ll1$. 
From now, we follow the argument given by Barthe--B\"{o}r\"{o}czky--Fradelizi \cite{BBF}. 
We claim that 
\begin{equation}\label{e:Step2-30Sep}
	\frac12|x|^2 - \phi(x) + \mu \ge -1,\;\;\; \forall x\in B_{2R(\varepsilon)}
\end{equation}
holds true by choosing $\varepsilon_0 = \varepsilon_0(n,\lambda)$ small enough. 
Once one could see this, we may apply an elementary inequality $\frac12|t|\le | e^t-1 |$ for $t\ge-1$ to see that 
\begin{align*}
\int_{ B_{ R(\varepsilon) }}\big| \frac12|x|^2 - \phi(x) + \mu \big|\, dx 
\le 
2 \int_{B_{10 R(\varepsilon)}} \big| 1 - e^{ \frac12|x|^2 - \phi(x) + \mu } \big| \, dx 
	\le 
	C(n,{\lambda}) \varepsilon^{\frac15},
\end{align*}
which concludes the proof from the definition of $\phi$. 
To see \eqref{e:Step2-30Sep}, we contradictorily assume that 
\begin{equation}\label{e:Conthypo}
\exists x_0 \in  B_{2R(\varepsilon)}:\; \frac12|x_0|^2 - \phi(x_0) + \mu <-1.
\end{equation}
Let 
$$
r_*:= \frac{1}{1000  R(\varepsilon)} = \frac{1}{10\lambda} \frac{1}{ \sqrt{ \log\, \frac1{\varepsilon}}  }. 
$$
Then for any $x \in B_{r_*}(x_0)$, we have that 
\begin{align*}
	\big| \frac12|x|^2 - \frac12|x_0|^2 \big| \le \frac12 r_* \times 10R(\varepsilon) = \frac1{200}, 
\end{align*}
and 
\begin{align*}
\big| \phi(x) - \phi(x_0) \big| 
&\le 
10 R(\varepsilon) r_* = \frac1{100}, 
\end{align*}
where we used the meanvalue theorem,  \eqref{e:Nablaf_t}, $\Lambda=1$ and $x,x_0 \in 10B_{R(\varepsilon)}$. 
Therefore, we see that 
$$
\sup_{ x\in B_{r_*}(x_0) }
\big| \frac12|x|^2 - \frac12|x_0|^2 \big|
+
\big| \phi(x) - \phi(x_0) \big| 
\le 
\frac1{50}, 
$$
and hence \eqref{e:Conthypo} yields that 
\begin{align*}
&\frac12|x|^2 - \phi(x) + \mu \\
&\le 
\frac12|x_0|^2 - \phi(x_0) + \mu 
+\big(
\big| \frac12|x|^2 - \frac12|x_0|^2 \big|
+
\big| \phi(x) - \phi(x_0) \big| 
\big)
\le -\frac13
\end{align*}
for all $x\in B_{r_*}(x_0)$. 
From this, it follows that 
\begin{align*}
\int_{ B_{r_*}(x_0) } 
\big|1-e^{ \frac12|x|^2 - \phi(x) + \mu }\big|\, dx 
&\ge 
C(n) r_*^n 
=
C(n) \lambda^{-n} (\log\, \frac1{\varepsilon})^{-\frac{n}2}.
\end{align*}
Since $\log\, \frac1{\varepsilon} \le C(n) \varepsilon^{-\frac{1}{100n}}$, this yields that 
$$
\int_{ B_{r_*}(x_0) } 
\big|1-e^{ \frac12|x|^2 - \phi(x) + \mu }\big|\, dx
\ge 
C(n) \lambda^{-n} \varepsilon^{\frac1{200}}.
$$
On the other hand, from $B_{r_*}(x_0) \subset 10B_{R(\varepsilon)}$ and \eqref{e:AfterCS}, we have that 
\begin{align*}
\int_{ B_{r_*}(x_0) } 
\big|1-e^{ \frac12|x|^2 - \phi(x) + \mu }\big|\, dx
\le 
\int_{ 10B_{R(\varepsilon)} } 
\big|1-e^{ \frac12|x|^2 - \phi(x) + \mu }\big|\, dx
\le C(n,{\lambda}) \varepsilon^{\frac{1}{10}}.
\end{align*}
This means 
$$
C(n) \lambda^{-n} \varepsilon^{\frac1{200}}
\le 
C(n,{\lambda})\varepsilon^{\frac1{10}}
$$
for some $\varepsilon \in (0,\varepsilon_0)$, but this becomes a contradiction provided that $\varepsilon_0$ is sufficiently small depending on $n,\lambda$.
This completes the proof of \eqref{e:Step2-30Sep}.
\end{proof}
\fi 

\if0
We end this paper by giving two remarks. 
The first is about a relation between the definite jump of $v$ and stability estimates. 
One may generally think the definite jump of $v$ as an inequality taking the form of 
\begin{equation}\label{e:VagueJump}
    \delta_{\rm BS}(f_0, f_1) \ge c_1 \inf_{m,B} D(\frac{f_0\circ B}{m},\gamma)^\mu 
\end{equation}
for some distance $D$ and constants $c_1,\mu$. If this could be true, one can derive some stability estimate 
\begin{equation}\label{e:Stability-Ideal}
    \delta_{\rm BS} (f_0,\gamma) \ge c_1 \inf_{m,B}D(\frac{f_0\circ B}{m},\gamma)^\mu, 
\end{equation}
thanks to Theorem \ref{t:MonoVP}. One may wonder if the reverse implication is true. 
To this point, the following  problem is relevant: does it hold that 
\begin{equation}\label{e:ExpDecay-Ideal}
\delta_{\rm BS}(f_t, \gamma) \le e(t) \delta_{\rm BS}(f_0, \gamma)
\end{equation}
for some strictly decreasing $e(t)$ such that $e(0)=1$ and $\lim_{t\to \infty} e(t) = 0$. This inequality is an analogue to the exponential decay in entropy. Conditional on \eqref{e:ExpDecay-Ideal}, one can rederive \eqref{e:VagueJump} from \eqref{e:Stability-Ideal}, since \eqref{e:ExpDecay-Ideal} is equivalent to 
$\delta_{\rm BS}(f_0,f_t) \ge (1-e(t)) \delta_{\rm BS}(f_0,\gamma)$. 
The inequality \eqref{e:ExpDecay-Ideal}, if true, would be a consequence of some statement of the second derivative of $v(f_t)$. An investigation of the second derivative was pointed out by Emanuel Milman in the Online Asymptotic Geometric Analysis Seminar. 

The second remark is about the assumption of the uniform log-concavity. 
The relevancy of such an assumption heavily depends on the choice of $D$ in \eqref{e:VagueJump} or \eqref{e:Stability-Ideal}. 
If one uses some strong distance like the entropy gap or the Wasserstein distance as $D$, then one cannot expect \eqref{e:VagueJump} or \eqref{e:Stability-Ideal} for arbitrary $f_0$. 
In fact, by considering the Gaussian mixture $g^{(\varepsilon)}:=\varepsilon \gamma_{\frac1\varepsilon} + (1-\varepsilon) \gamma_{\frac1{1-\varepsilon}}$ as $f_0$ and taking $\varepsilon\to0$, one can see the failure of \eqref{e:Stability-Ideal} when $D$ is the Wasserstein distance. Note that the variance of $g^{(\varepsilon)}$ is discontinuous at $\varepsilon=0$, and this is a source of the failure. For further details, we refer the argument in Courtade--Fathi--Pananjady \cite{CFP}. 
On the other hand, our result \eqref{e:Jump-time-t} can be read as \eqref{e:VagueJump} with some $L^2$-distance as $D$ under the uniform log-concavity assumption. 
Since $L^2$-norm is weaker than the Wasserstein distance, the Gaussian mixture $g^{(\varepsilon)}$ is no longer the counterexample for such an inequality. 
\fi 

\if0 
\subsection{Sharpness}
Are there any way of showing that  $c_1$ for \eqref{e:Stability-Ideal} must depend on $n$...? 
One suggestion is to consider $f_0 (x) = e^{-\frac12 \|x\|_p^2}$ and then take $p \to2$. 
Another suggestion is to take $f_0(x) = e^{-( \frac12 |x|^2 + \varepsilon \varphi_0(x))}$ for some fixed $\varphi_0$ and then perturb in $\varepsilon$. 

\begin{claim}\label{cl:Poi}
\begin{align*}
v(e^{-\phi_\varepsilon}) &= (2\pi)^2\\
&\quad + 
\varepsilon^2 (2\pi)^2
\bigg(
\int_{\mathbb{R}^n} |\varphi|^2\, d\gamma - \big( \int_{\mathbb{R}^n} \varphi_0\, d\gamma \big)^2 - \frac12 \int_{\mathbb{R}^n} |\nabla \varphi_0|^2\, d\gamma\\
& \quad \qquad \qquad \quad + \frac12 \int_{\mathbb{R}^n} \big( \langle [ \nabla^2 \varphi_0 - \Delta \varphi_0{\rm id} ]y, \nabla \varphi \rangle + |\Delta \varphi_0|^2 - {\rm Tr}\, ( (\nabla^2 \varphi_0)^2 ) \big)\, d\gamma
\bigg)
+O(\varepsilon^3) .	
\end{align*}
In particular, from $\frac{v(e^{-\phi_\varepsilon })}{v(e^{-\frac12|x|^2})} \le 1$, it follows that 
\begin{align*}
&\int_{\mathbb{R}^n} |\varphi|^2\, d\gamma - \big( \int_{\mathbb{R}^n} \varphi_0\, d\gamma \big)^2 \\
&\quad  + \frac12 \int_{\mathbb{R}^n} \big( \langle [ \nabla^2 \varphi_0 - \Delta \varphi_0{\rm id} ]y, \nabla \varphi \rangle + |\Delta \varphi_0|^2 - {\rm Tr}\, ( (\nabla^2 \varphi_0)^2 ) \big)\, d\gamma\\
 &\le 
  \frac12 \int_{\mathbb{R}^n} |\nabla \varphi_0|^2\, d\gamma
\end{align*}

\end{claim}
\begin{proof}
For any fixed even $\varphi_0$ s.t. $\nabla^2 \varphi_0 \ge 0$ and let  
$$
\phi_\varepsilon(x):= \frac12|x|^2 + \varepsilon \varphi_0(x).  
$$
We then expand  
$
v(e^{-\phi_\varepsilon})
$
in terms of $\varepsilon$. 
We first note that  
\begin{align*}
	\int_{\mathbb{R}^n} e^{ - \phi_\varepsilon(x)}\, dx 
	&= 
	\int_{\mathbb{R}^n} e^{-\varepsilon \varphi_0(x)} e^{-\frac12|x|^2}\, dx \\
	&= (2\pi)^{\frac{n}2} 
	- \varepsilon
	\int_{\mathbb{R}^n} \varphi_0(x) e^{-\frac12|x|^2}\, dx + \frac12 \varepsilon^2 \int_{\mathbb{R}^n} |\varphi_0(x)|^2 e^{-\frac12|x|^2}\, dx + O(\varepsilon^3). 
\end{align*}
For the polar function, we use the identity that 
$$
\phi_\varepsilon^*( \nabla \phi_\varepsilon (y)) = \langle y, \nabla \phi_\varepsilon(y) \rangle - \phi_\varepsilon(y)
$$
to see that 
\begin{align*}
\int_{\mathbb{R}^n} e^{ -\phi_\varepsilon^*(x) }\, dx 
&= 
\int_{\mathbb{R}^n} e^{ -\phi_\varepsilon^*(\nabla\phi_\varepsilon(y)) }{\rm det}\, \nabla^2 \phi_\varepsilon(y)\, dy \\
&= 
\int_{\mathbb{R}^n} e^{  \phi_\varepsilon(y) - \langle y, \nabla\phi_\varepsilon(y) \rangle }{\rm det}\, ( {\rm id} + \varepsilon \nabla^2 \varphi_0(y) ) \, dy \\
&= 
\int_{\mathbb{R}^n} e^{  \frac12|y|^2 +\varepsilon \varphi_0(y)  - \langle y, y + \varepsilon \nabla\varphi_0(y) \rangle }{\rm det}\, ( {\rm id} + \varepsilon \nabla^2 \varphi_0(y) ) \, dy \\
&= 
\int_{\mathbb{R}^n} e^{  -\frac12|y|^2 +\varepsilon \big( \varphi_0(y)  - \langle y,  \nabla\varphi_0(y) \rangle \big) }{\rm det}\, ( {\rm id} + \varepsilon \nabla^2 \varphi_0(y) ) \, dy \\
&=
\int_{\mathbb{R}^n} \bigg( 1+ \varepsilon \big( \varphi_0(y)  - \langle y,  \nabla\varphi_0(y) \rangle \big) + \frac12 \varepsilon^2 \big( \varphi_0(y)  - \langle y,  \nabla\varphi_0(y) \rangle \big)^2+\cdots \bigg) \\
&\quad \times 
\bigg(
1+ \varepsilon{\rm Tr}\, (\nabla^2 \varphi_0(y)) +\frac12 \varepsilon^2 \big( \big[{\rm Tr}\, (\nabla^2 \varphi_0(y))\big]^2 - {\rm Tr}\, \big((\nabla^2 \varphi_0(y))^2 \big)\big) +\cdots 
\bigg)  e^{-\frac12|y|^2}\, dy \\
&= 
(2\pi)^\frac{n}2 
+ \varepsilon \int_{\mathbb{R}^n} \big( \varphi_0(y) - \langle y, \nabla \varphi_0(y) \rangle + \Delta \varphi_0(y) \big) e^{-\frac12|y|^2}\, dy \\
&\quad + 
\frac12 \varepsilon^2 \int_{\mathbb{R}^n} 
\bigg( 2\big[\varphi_0(y) - \langle y,\nabla \varphi_0(y) \rangle \big] \Delta \varphi_0(y) 
+
 \big( \varphi_0(y)  - \langle y,  \nabla\varphi_0(y) \rangle \big)^2\\
&\qquad \qquad \qquad +
\big( \big[{\rm Tr}\, (\nabla^2 \varphi_0(y))\big]^2 - {\rm Tr}\, \big((\nabla^2 \varphi_0(y))^2 \big)\big)
\bigg) e^{-\frac12|y|^2}\, dy, 
\end{align*}
where we used 
$$
{\rm det}\, ({\rm id} + \varepsilon A) = 1 + \varepsilon {\rm Tr}\, A + \frac12 \varepsilon^2 \big( \big( {\rm Tr}\, A \big)^2 - {\rm Tr}\, (A^2) \big) + O(\varepsilon^3),
$$
which holds for diagonizable $A$. 
For the second term, we notice from an integration by parts that 
\begin{align*}
	&\int_{\mathbb{R}^n} \big(  - \langle y, \nabla \varphi_0(y) \rangle + \Delta \varphi_0(y) \big) e^{-\frac12|y|^2}\, dy\\
	&= 
	\int_{\mathbb{R}^n} \big(  - \langle y, \nabla \varphi_0(y) \rangle -   \langle \nabla \varphi_0(y) , -y \rangle \big) e^{-\frac12|y|^2}\, dy
	=0. 
\end{align*}
For the third term, we need to identities. The first one is 
\begin{align*}
\int_{\mathbb{R}^n} \varphi_0(y) 	\Delta \varphi_0(y) e^{-\frac12|y|^2}\, dy 
&= 
- \int_{\mathbb{R}^n} \big( | \nabla \varphi_0(y)|^2 + \varphi_0(y) \langle (-y), \nabla\varphi_0(y) \rangle 	\big)  e^{-\frac12|y|^2}\, dy  \\
&= 
\int_{\mathbb{R}^n} \big( -| \nabla \varphi_0(y)|^2 + \varphi_0(y) \langle y, \nabla\varphi_0(y) \rangle 	\big)  e^{-\frac12|y|^2}\, dy.
\end{align*}
The second one is 
\begin{align*}
&\int_{\mathbb{R}^n} 
\langle y, \nabla \varphi_0(y) \rangle\Delta \varphi_0(y) e^{-\frac12|y|^2}\, dy \\
&= 
-\int_{\mathbb{R}^n} \big( |\nabla \varphi_0(y)|^2 + \langle (\nabla^2 \varphi_0(y)) y,  \nabla \varphi_0(y)  \rangle - | \langle y, \nabla \varphi_0(y)\rangle |^2 \big) e^{-\frac12|y|^2}\, dy,
\end{align*}
which in particular implies that 
\begin{align*}
	&-\int_{\mathbb{R}^n} 
\langle y, \nabla \varphi_0(y) \rangle\Delta \varphi_0(y) e^{-\frac12|y|^2}\, dy \\
&=
-\frac12 \int_{\mathbb{R}^n} 
\langle y, \nabla \varphi_0(y) \rangle\Delta \varphi_0(y) e^{-\frac12|y|^2}\, dy
-
\frac12 \int_{\mathbb{R}^n} 
\langle y, \nabla \varphi_0(y) \rangle\Delta \varphi_0(y) e^{-\frac12|y|^2}\, dy\\
&= 
\frac12\int_{\mathbb{R}^n} \big( |\nabla \varphi_0(y)|^2 + \langle (\nabla^2 \varphi_0(y)) y,  \nabla \varphi_0(y)  \rangle -\langle y, \nabla \varphi_0(y) \rangle\Delta \varphi_0(y) - | \langle y, \nabla \varphi_0(y)\rangle |^2 \big) e^{-\frac12|y|^2}\, dy\\
&= 
\frac12\int_{\mathbb{R}^n} \big( |\nabla \varphi_0(y)|^2 + \langle \big[ \nabla^2 \varphi_0(y) - \Delta \varphi_0(y) {\rm id} \big] y,  \nabla \varphi_0(y)  \rangle  - | \langle y, \nabla \varphi_0(y)\rangle |^2 \big) e^{-\frac12|y|^2}\, dy. 
\end{align*}

Putting these two identities together, we derive that 
\begin{align*}
&\int_{\mathbb{R}^n} \big( 
2\big[\varphi_0(y) - \langle y,\nabla \varphi_0(y) \rangle \big] \Delta \varphi_0(y) 
+ 
 \big( \varphi_0(y)  - \langle y,  \nabla\varphi_0(y) \rangle \big)^2\big)e^{-\frac12|y|^2}\, dy 	\\
&=  
\int_{\mathbb{R}^n} \bigg(
-2| \nabla \varphi_0(y)|^2 + 2 \varphi_0(y) \langle y, \nabla\varphi_0(y) \rangle \\ 
&\qquad \qquad  \quad + \big( |\nabla \varphi_0(y)|^2 + \langle \big[ \nabla^2 \varphi_0(y) - \Delta \varphi_0(y) {\rm id} \big] y,  \nabla \varphi_0(y)  \rangle  - | \langle y, \nabla \varphi_0(y)\rangle |^2 \big)\\
&\qquad \qquad \quad 
+ 
|\varphi_0(y)|^2 -2 \varphi_0(y) \langle y, \nabla\varphi_0(y) \rangle + | \langle y, \nabla\varphi_0(y) \rangle |^2
\bigg) e^{-\frac12|y|^2}\, dy \\
&= 
\int_{\mathbb{R}^n} \bigg(
-| \nabla \varphi_0(y)|^2 + 
|\varphi_0(y)|^2  + \langle \big[ \nabla^2 \varphi_0(y) - \Delta \varphi_0(y) {\rm id} \big] y,  \nabla \varphi_0(y)  \rangle  
\bigg) e^{-\frac12|y|^2}\, dy. 
\end{align*}
Therefore, 
\begin{align*}
	&\int_{\mathbb{R}^n} e^{ -\phi_\varepsilon^*(x) }\, dx \\
	&= (2\pi)^\frac{n}2 
	+ \varepsilon \int_{\mathbb{R}^n} \varphi_0(y) e^{-\frac12|y|^2}\, dy \\
	&\quad 
	+ 
	\frac12 \varepsilon^2 \int_{\mathbb{R}^n} \bigg( 
	-| \nabla \varphi_0(y)|^2 + 
|\varphi_0(y)|^2  + \langle \big[ \nabla^2 \varphi_0(y) - \Delta \varphi_0(y) {\rm id} \big] y,  \nabla \varphi_0(y)  \rangle\\
& \qquad \qquad \qquad \quad   
+
\big( \big[{\rm Tr}\, (\nabla^2 \varphi_0(y))\big]^2 - {\rm Tr}\, \big((\nabla^2 \varphi_0(y))^2 \big)\big)
	 \bigg)
	 e^{-\frac12|y|^2}\, dy.  
\end{align*}
In total, this shows that 
\begin{align*}
v(e^{-\phi_\varepsilon}) &= (2\pi)^2\\
&\quad + 
\varepsilon^2 (2\pi)^2
\bigg(
\int_{\mathbb{R}^n} |\varphi|^2\, d\gamma - \big( \int_{\mathbb{R}^n} \varphi_0\, d\gamma \big)^2 - \frac12 \int_{\mathbb{R}^n} |\nabla \varphi_0|^2\, d\gamma\\
& \quad \qquad \qquad \quad + \frac12 \int_{\mathbb{R}^n} \big( \langle [ \nabla^2 \varphi_0 - \Delta \varphi_0{\rm id} ]y, \nabla \varphi \rangle + |\Delta \varphi_0|^2 - {\rm Tr}\, ( (\nabla^2 \varphi_0)^2 ) \big)\, d\gamma
\bigg)
+O(\varepsilon^3) .	
\end{align*}
\end{proof}

It should be possible to derive some stability of this improved Poincar\'{e} from \eqref{e:Stability-Ideal}...

\textcolor{red}{
\begin{align*}
\int_{\mathbb{R}^n} \big( \langle [ \nabla^2 \varphi_0 - \Delta \varphi_0{\rm id} ]y, \nabla \varphi \rangle ) \big)\, d\gamma
=&
\int_{\mathbb{R}^n}  \langle y, \frac12 \nabla |\nabla \varphi_0|^2 - \nabla \varphi_0 \Delta \varphi_0\rangle\, d\gamma
\\
=&
 \int_{\mathbb{R}^n}  (\frac12 \Delta |\nabla \varphi_0|^2 +   (\Delta \varphi_0)^2 - \langle \nabla \Delta \varphi_0, \nabla \varphi_0 \rangle) \, d\gamma
 \\
=&
 \int_{\mathbb{R}^n}  (\| \nabla^2 \varphi_0 \|_{\rm HS}^2 -   (\Delta \varphi_0)^2 ) \, d\gamma.
\end{align*}
}

\fi 
\if0
=========================================
=========================================

\subsection{Necessary condition}

Consider the case $n=1$ and suppose 
\begin{equation}\label{e:WantStab}
\inf_{m, B} 
\int_{\mathbb{R}}
\big|
e^{-\frac12 x^2} 
- 
\frac{1}me^{-\phi(Bx)}
\big|^r\, dx 
\le 
C 
\delta_{\rm BS}(e^{-\phi}, \gamma)^{\rho_r}
\end{equation}
for some $\rho_r\ge0$ and $r\ge 1$. 
Then what is the necessary condition of $\rho_r$?

We choose a perturbation of the Gaussian 
$$
\phi_\varepsilon(x) = \frac12 x^2 + \varepsilon \phi_0(x)
$$
where $\phi_0$ will be determined later. 

For the L.H.S, perhaps the case $m=0,B=1$ would be near optimal choice and in that case we have that 
\begin{align*}
    \int_{\mathbb{R}}
\big|
e^{-\frac12 x^2} 
- 
\frac{1}me^{-\phi_\varepsilon(Bx)}
\big|^r\, dx 
&= 
\int_{\mathbb{R}} 
\big|1 - e^{-\varepsilon \phi_0(x)} \big|^r\, e^{-\frac{r}2x^2}\, dx \\
&= 
\int_{\mathbb{R}} 
\big|\varepsilon \phi_0(x) + \cdots \big|^r\, e^{-\frac{r}2x^2}\, dx \\
&= 
\varepsilon^r \int |\phi_0(x)|^r e^{-\frac{r}{2}x^2}\, dx + O(\varepsilon^{r+1}).
\end{align*}
On the other hand, for R.H.S, first notece that 
\begin{align*}
    \int e^{-\phi_\varepsilon(x)}\, dx 
    &= 
    \int e^{-\varepsilon\phi_0(x)} e^{-\frac12x^2}\, dx \\
    &= 
    \int ( 1 - \varepsilon \phi_0(x)+\cdots) e^{-\frac12x^2}\, dx \\
    &= 
    (2\pi)^\frac12 - \varepsilon \int \phi_0(x) e^{-\frac12x^2}\, dx +O(\varepsilon^2).
\end{align*}
To compute $\int e^{-\phi_\varepsilon^*}\, dx$ we appeal to the identity that 
$$
\phi_\varepsilon^*(\phi_\varepsilon'(y))
= 
y \phi_\varepsilon'(y) - \phi_\varepsilon(y). 
$$
With this in mind, by a change of variable $x = \phi_\varepsilon'(y)$, 
\begin{align*}
    \int e^{-\phi_\varepsilon^*(x)}\, dx 
    &= 
    \int e^{ -\phi_\varepsilon^*( \phi_\varepsilon'(y) ) }
    \phi_\varepsilon''(y)\, dy\\
    &= 
    \int e^{- \big( y \phi_\varepsilon'(y) - \phi_\varepsilon(y) \big) }
    \big(1+\varepsilon\phi_0''(y)\big)\, dy \\
    &= 
    \int e^{- \big( y ( y + \varepsilon \phi_0'(y) ) - \frac12y^2 - \varepsilon \phi_0(y) \big) }
    \big(1+\varepsilon\phi_0''(y)\big)\, dy \\
    &= 
    \int e^{ \varepsilon( \phi_0(y) - y \phi_0'(y) ) }
    \big(1+\varepsilon\phi_0''(y)\big)e^{-\frac12y^2}\, dy \\
    &= 
    \int \big( 1 + \varepsilon( \phi_0(y) - y \phi_0'(y) ) + \cdots \big) 
    \big(1+\varepsilon\phi_0''(y)\big)e^{-\frac12y^2}\, dy \\
    &= 
    \int \big( 1 + \varepsilon( \phi_0(y) - y \phi_0'(y) + \phi_0''(y) ) \big) e^{-\frac12y^2}\, dy 
    +
    O(\varepsilon^2)\\
    &= 
    (2\pi)^\frac12 
    + 
    \varepsilon
    \int \big(  \phi_0(y) - y \phi_0'(y) + \phi_0''(y) \big) e^{-\frac12y^2}\, dy 
    +
    O(\varepsilon^2).
\end{align*}
Therefore,
\begin{equation}\label{e:AsymPerturb}
    v(e^{-\phi_{\varepsilon}})
    =
    (2\pi) 
    - 
    \varepsilon
    \int\big( x \phi_0'(x) - \phi_0''(x) \big) e^{-\frac12y^2}\, dy
    +
    O(\varepsilon^2).
\end{equation}

From this, we could at least obtain the necessary condition
$$
\rho_r \le r.
$$

However, one would improve the nec condition if one chooses $\phi_0$ to be a sol to the ODE
$$
\phi_0''(x) - x \phi_0'(x) = 0\;\;\; {\rm a.e.}\;\;\;x \in \mathbb{R}.
$$
Explicit solution: 
$$
\phi_0(x) := \int_0^{|x|} e^{\frac12 s^2}\, ds,\;\;\; x\in\mathbb{R}.
$$
For such choice of $\phi_0$, \eqref{e:AsymPerturb} yields that 
$$
v(e^{-\phi_{\varepsilon}}) =2\pi +O(\varepsilon^2)
$$
and hence the necessary condition is improved:  
$$
\rho_r \le \frac{r}2.
$$
{\color{red} But can we choose such $\phi_0$??}

It is a bit risky to take $\phi_0$ as above because of the integrability of $\int \phi_0(y) e^{-\frac12y^2}\, dy$ etc. 
So let us take instead 
$$
\phi_0(x):= \int_{0}^{|x|} e^{ \frac13 s^2 }\, ds
$$
in which case we have that 
$$
\phi_0'(x) = ({\rm sgn}\, x) e^{\frac13 x^2},\; \phi_0''(x) = \frac23 |x| e^{\frac13x^2} = \frac23 x \phi_0'(x) \;\;\; x\neq 0.
$$
In particular, this sattisfies that 
$$
\phi_0''(x) - x \phi_0'(x) 
= 
\frac23 x \phi_0'(x) - x\phi_0'(x)
=
-\frac13 x \phi_0'(x) 
=
-\frac13 |x| e^{\frac13x^2} \le 0.
$$
Thus, from \eqref{e:AsymPerturb}, such $\phi_0$ satisfies that 
$$
v(e^{-\phi_\varepsilon}) \le 2\pi + O(\varepsilon^2).
$$
{\color{red}Question
Is it true that 
$$
\phi_0(x):= \int_{0}^{|x|} e^{ \frac13 s^2 }\, ds
\le 
Ce^{- (\frac12-\delta)x^2}??
$$
}

Putting altogether, \eqref{e:WantStab} implies that 
$$
c_{r,\phi_0}\varepsilon^r \le C \bigg( \log\, \frac{v(\gamma)}{v(e^{-\phi_\varepsilon})} \bigg)^{\rho_r}
\le ... :-(
$$
{\color{red} We need $\phi_0$ s.t. $v(e^{-\phi_\varepsilon})\ge 2\pi + O(\varepsilon^2)$...!!}

\fi

\if0
\section{Discussion and further direction for reseach}\label{SS:3.3}
{\color{blue}
Perhaps the most general form of the jump would be 
$$
\delta_{\rm BS}(f_0,f_t) \ge c_1 (\lambda \lambda^\circ)^{c_2} \big( \min\{1,t\} \big)^{\rho} d(\widetilde{f_0},\gamma)^\mu 
$$

We conclude this paper by giving a discussion about Theorem \ref{Prop:Stability30Sep}.  
By recalling \eqref{e:Surprise!} and the heuristic argument  \eqref{e:Heuri1}, it might be reasonable to expect a completely dimension-free estimate 
$$\delta_{\rm BS}(f_0,f_{t}) \ge c_1 (\lambda \lambda^\circ)^{c_2} \big( {\rm Ent}\, (\gamma) - {\rm Ent}\, (f_0) \big)
$$
for any $f_0 \in \mathcal{F}(\lambda,\lambda^\circ)$ and $t_0\ge1$, where $c_1,c_2$ are some numerical constant. 
if this could be true then one may also derive the completely dimension-free stability estimates by taking $t_0\to\infty$: 
$$
\delta_{\rm BS}(f_0,\gamma) \ge c_1 (\lambda \lambda^\circ)^{c_2} \big( {\rm Ent}\, (\gamma) - {\rm Ent}\, (f_0) \big) \ge \frac{c_1}2 (\lambda \lambda^\circ)^{c_2} \| f_0 - \gamma\|_{L^1(dx)}^2, 
$$
for all $f_0 \in \mathcal{F}(\lambda, \lambda^\circ)$, where we used the Pinsker--Csisz\'{a}r--Kullback inequality. One may compare this inequality with the stability estimates for the Shannon--Stam inequality by Eldan--Mikulincer \cite{ElMik}.

Further, if one recalls the entropy jump inequality, a stronger estimate  reasonable to conjecture that for an isotropic $f_0$ satisfying the Poincar\'{e} inequality and $t_0\gg1$, 
$$
\delta_{\rm BS}(f_0,f_{t_0}) \ge c_1 \lambda(f_0)^{c_2} \big( {\rm Ent}\, (\gamma) - {\rm Ent}\, (f_0) \big),
$$
where $c_1,c_2>0$ are some numerical constant.  
If this could be true, stability estimates will also brabra....
Mention about Fathi \cite{Fathi}, Tsuji \cite{Tsuji}...

Finally, regarding the sharpness, the standard perturbation argument with $f_0(x) = e^{-\frac12|x|^2 + \delta \psi(x)}$, $\delta\to0$,  for some appropriate $\psi$ suggests that one may improve the factor $\delta_{\rm BS}(f_0,\gamma)^\frac12$ in Theorem \ref{Prop:Stability30Sep} to $\delta_{\rm BS}(f_0,\gamma)$. 
 
}
\fi

\section{Concluding remarks}\label{SecRemark}
\subsection{$L^r$-volume product by Berndtsson--Mastrantonis--Rubinstein}
Berndtsson--Mastrantonis--Rubinstein \cite{BMR} introduced the $L^r$-volume product\footnote{In their original paper, they introduced a name of \textit{$L^p$-Mahler volume}. In order to avoid a potential confusion, we use the terminology $L^r$-volume product here in stead of $L^p$-Mahler volume. } which is defined as 
$$
\mathcal{M}_r(K) \coloneqq |K| \int_{\R^n} \big( \int_K e^{r \langle x, y \rangle} \frac{dy}{|K|} \big)^{-\frac1r}\, dx, 
$$
for $r>0$ and a convex body $K \subset \R^n$. 
One may see that $\lim_{r\to \infty} \mathcal{M}_r(K) = n! v(K)$. 
For this $L^r$-volume product, Berndtsson--Mastrantonis--Rubinstein \cite{BMR} established that the Blaschke--Santal\'{o} type inequality  
\begin{equation}\label{e:M_pBS}
\mathcal{M}_r(K) \le \mathcal{M}_r(\mathbf{B}_2^n)
\end{equation}
holds for any symmetric convex body $K \subset \R^n$ and all $r>0$. 
This $L^r$-volume product may be realized in the framework of our reverse hypercontractivity/Laplace transform. Let ${p}>0$ and $r>0$ be arbitrary. Then one may see from the simple change of variables that 
$$
\mathcal{M}_r(K)
= 
r^{-n}
\| f_K\|_{L^{{p}}(dx)}^{- p' }
\big\| \mathfrak{L} f_K \big\|_{L^{-\frac1r}(dx)}^{-\frac1r} 
$$
for any  convex body $K\subset \mathbb{R}^n$, where $f_K(x) := \frac1{|K|} \mathbf{1}_K(x)$. One has only to take $q=p'$ and $r=-\frac1q$ to compare Corollary \ref{Cor:Laplace}. Indeed, with this choice, \eqref{e:M_pBS} may be read as 
\begin{equation}\label{e:M_pLaplace}
    \big\|\mathfrak{L} f_K \big\|_{L^q(dx)} \ge 
    \frac{ \big\|\mathfrak{L} f_{\mathbf{B}^n_2} \big\|_{L^q(dx)} }{ \| f_{\mathbf{B}^n_2}\|_{L^p(dx)} } \|f_K\|_{L^p(dx)}. 
\end{equation}
Given this, one may wonder some relation between the $L^r$-volume product and the Laplace transform which is the analogue to the one between the classical volume product and the functional volume product.
For the classical volume product, we have a clear relation $v(e^{-\frac12\|\cdot\|_K^2})= (2\pi)^n |\mathbf{B}^n_2|^{-2}v(K)$ from which one may rederive the classical Blaschke--Santal\'{o} inequality from the functional one. Conversely, one may derive the functional one for log-concave functions from the classical one (for all dimension), see \cite{AKM}. 
However, in the frame work of the Laplace transform and $L^r$-volume product, the relation is less clear because of the lack of the duality. 
For instance, it is not obvious\footnote{If one takes $f=f_K$ in Corollary \ref{Cor:Laplace}, one would obtain some inequality similar to \eqref{e:M_pLaplace}. However, the constant of the inequality does not match. Indeed, \eqref{e:M_pLaplace} is stronger in such special cases. } to us if one may recover \eqref{e:M_pBS} or equivalently \eqref{e:M_pLaplace} from Corollary \ref{Cor:Laplace} with a choice $f(x)=e^{-\frac12\|x\|_K^2}$. Similarly, a simple adaptation of the lifting argument as in \cite{AKM} to this framework does not work well\footnote{If one applies the lifting argument to this frame work in order to deduce Corollary \ref{Cor:Laplace} (for log-concave functions) from \eqref{e:M_pBS}, one eventually faces to an issue on the forward  Minkowski's integral inequality for $L^q$-norm with $q<0$ which is not true in general.}. 
Therefore, there is no implication relation (at least not in a direct or obvious way) between the statement of Corollary \ref{Cor:Laplace} and \eqref{e:M_pBS}

\subsection{Brascamp--Lieb theory view point and Kolesnikov--Werner's conjecture}
We here explain our results from a view point of the Brascamp--Lieb theory, and point out a link to Kolesnikov--Werner's conjecture \cite{KW}.  
Let us recall that ${\rm BL}_s^{(e)}\ge0$ denotes the best constant of \eqref{e:HC-IBL} for even functions. 
As a corollary of Theorem \ref{t:SymmRevHC} and the duality argument, we obtain the following: 
\begin{corollary}\label{Cor:InvBL}
    Let $s>0$, $1-e^{2s}\le q_s<0<p_s \le 1-e^{-2s}$, and $c_1(s)=\frac1{p_s},c_2(s)=\frac1{q_s'}$. Then for $\mathcal{Q}_s$ given by \eqref{e:ExampleBLdata}, ${\rm BL}_s^{(e)}$ is exhausted by centered Gaussians, that is  
    $$
    {\rm BL}_s^{(e)} = \inf_{A_1,A_2>0} \int_{\mathbb{R}^{2n}} e^{-\pi \langle x, \mathcal{Q}_s x\rangle} \prod_{i=1,2} \gamma_{A_i}(x_i)^{c_i(s)}\, dx.  
    $$
\end{corollary}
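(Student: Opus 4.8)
The plan is to derive Corollary \ref{Cor:InvBL} by combining Theorem \ref{t:SymmRevHC} at the endpoint exponents with the transference identity \eqref{e:TransHC-IBL} and a self-duality argument, and then to verify that the resulting best constant is attained by centered Gaussians using Lemma \ref{Prop:Obs1} together with the known Gaussian-extremiser behaviour for the reverse hypercontractivity restricted to Gaussian inputs (as recorded in \eqref{e:GaussRevHC}).

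First I would recall that, by \eqref{e:TransHC-IBL}, the inverse Brascamp--Lieb inequality \eqref{e:HC-IBL} with the specific data \eqref{e:ExampleBLdata} is \emph{equivalent} to the reverse hypercontractivity inequality \eqref{e:RevHC_0}, with the precise relation ${\rm H}_{s,p,q}^{(e)} = C_s\,{\rm BL}_s^{(e)}$. By Theorem \ref{t:SymmRevHC}, for $1-e^{2s}\le q_s<0<p_s\le 1-e^{-2s}$ we know ${\rm H}_{s,p_s,q_s}^{(e)} = 1$ with equality for $f_0=\gamma$. Hence ${\rm BL}_s^{(e)} = C_s^{-1}$, and the remaining task is purely to identify this value as the Gaussian infimum on the right-hand side of the claimed formula. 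For this I would plug Gaussian inputs $f_i = \gamma_{A_i}$ directly into the Brascamp--Lieb functional $\int_{\mathbb{R}^{2n}} e^{-\pi\langle x,\mathcal{Q}_s x\rangle}\prod_i \gamma_{A_i}(x_i)^{c_i(s)}\,dx$; this is a Gaussian integral that evaluates in closed form as a ratio of determinants in $A_1,A_2$ and the blocks of $\mathcal{Q}_s$. One direction is immediate: since $\gamma_{A_i}$ are admissible even inputs, $\inf_{A_1,A_2>0}(\cdots) \ge {\rm BL}_s^{(e)}\prod_i(\int \gamma_{A_i})^{c_i(s)} = {\rm BL}_s^{(e)}$ after normalising $\int\gamma_{A_i}=1$ — wait, one must be slightly careful: the Gaussians are probability densities so $\int_{\mathbb{R}^n}\gamma_{A_i}\,dx_i = 1$ and the inequality \eqref{e:HC-IBL} gives exactly $\int e^{-\pi\langle x,\mathcal{Q}_s x\rangle}\prod_i\gamma_{A_i}(x_i)^{c_i(s)}\,dx \ge {\rm BL}_s^{(e)}$, so taking the infimum over $A_1,A_2$ yields $\inf_{A_1,A_2>0}(\cdots)\ge {\rm BL}_s^{(e)}$.

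For the reverse inequality $\inf_{A_1,A_2>0}(\cdots)\le {\rm BL}_s^{(e)}$, I would argue that the value $C_s^{-1}$ is actually realised along a Gaussian family. Concretely, taking $f_0 = \gamma$ in Theorem \ref{t:SymmRevHC} already gives equality ${\rm H}_{s,p_s,q_s}^{(e)}=1$, and transferring back through \eqref{e:TransHC-IBL} produces an explicit pair of centered Gaussians $(f_1,f_2) = (\gamma, \text{(Gaussian)})$ — note that when $f_0=\gamma$ the function $(f_0/\gamma)^{1/p_s}\equiv 1$, so $P_s[1]=1$, and the formulas in \eqref{e:TransHC-IBL} give $f_1=\gamma$ and $f_2 = (\int 1\, d\gamma)^{-q_s}\cdot 1\cdot\gamma = \gamma$, i.e. the pair of standard Gaussians achieves equality in \eqref{e:HC-IBL}. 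Therefore $\inf_{A_1,A_2>0}(\cdots) \le \int e^{-\pi\langle x,\mathcal{Q}_s x\rangle}\gamma(x_1)^{c_1(s)}\gamma(x_2)^{c_2(s)}\,dx = {\rm BL}_s^{(e)}$, which closes the loop. Combining the two inequalities gives the asserted identity.

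The main obstacle I anticipate is bookkeeping rather than conceptual: one must be careful that the normalisation constants $C_s$ in \eqref{e:TransHC-IBL}, the Hölder conjugate exponents $c_i(s)=1/p_s,\ 1/q_s'$ (with $q_s<0$, so $q_s'\in(0,1)$ and $c_2(s)>1$), and the sign conventions in the $L^{q_s}(\gamma)$-norm all match up so that equality genuinely transfers between \eqref{e:RevHC_0} and \eqref{e:HC-IBL}. A secondary subtlety is integrability: the Gaussian integral $\int e^{-\pi\langle x,\mathcal{Q}_s x\rangle}\prod_i\gamma_{A_i}(x_i)^{c_i(s)}\,dx$ converges only when the resulting quadratic form is positive definite, i.e. only for $A_1,A_2$ in an appropriate open cone; one should check that this cone is nonempty (the standard Gaussians lie in it) and that the infimum over this cone is the relevant quantity. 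Finally, to justify that the infimum is actually \emph{attained} (rather than merely approached) by centered Gaussians — which is the precise wording of the corollary — I would note that the equality case $f_0=\gamma$ in Theorem \ref{t:SymmRevHC} already exhibits an explicit Gaussian extremiser, so attainment is automatic and no compactness argument on the cone of covariance matrices is needed.
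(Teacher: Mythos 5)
Your proposal is correct and follows essentially the same route as the paper, which derives Corollary \ref{Cor:InvBL} precisely from Theorem \ref{t:SymmRevHC} combined with the duality/transference identity \eqref{e:TransHC-IBL} relating ${\rm H}_{s,p,q}^{(e)}$ and $C_s\,{\rm BL}_s^{(e)}$, together with the observation that $f_0=\gamma$ transfers to the pair of standard Gaussians achieving the value $C_s^{-1}$. The bookkeeping points you flag (signs of $q_s$, $q_s'\in(0,1)$, convergence of the Gaussian integral) are exactly the routine checks the paper leaves implicit.
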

We emphasize that this corollary does not follow from the general result due to Barthe--Wolff \cite{BW}. 
To clarify the situation, let us recall their result.
Let $m$, $d,d_1,\ldots, d_m \in \mathbb{N}$, $c_1,\ldots,c_m \in \mathbb{R}\setminus \{0\}$, $L_i:\mathbb{R}^d\to \mathbb{R}^{d_i}$ be a linear surjective map for $i=1,\ldots,m$, and $\mathcal{Q}$ be a self-adjoint matrix on $\mathbb{R}^d$. We often abbreviate $\mathbf{c} = (c_i)_{i=1}^m$ and $\mathbf{L}= (L_i)_{i=1}^m$. 
Consider the inequalities of the form  
\begin{equation}\label{e:IBL}
\Lambda(\mathbf{L},\mathbf{c},\mathcal{Q};\mathbf{f}):={\int_{\mathbb{R}^d} e^{-\pi \langle x,\mathcal{Q}x\rangle} \prod_{i=1}^m f_i(L_ix)^{c_i}\, dx }
\ge 
C{\prod_{i=1}^m \big(\int_{\mathbb{R}^{d_i}} f_i\, dx_i\big)^{c_i} }
\end{equation}
for some $C\ge0$ and all $f_i$ in some appropriate class. 
Barthe--Wolff \cite{BW} considered the inequality \eqref{e:IBL} for \textit{all} nonnegative $f_i \in L^1(\mathbb{R}^{d_i})$, and established the analogue to Lieb's fundamental theorem \cite{Lieb} under some non-degeneracy condition. 
In order to state their condition, we need further notations. We order $(c_i)_i$ so that $c_1,\ldots, c_{m_+} >0 > c_{m_++1},\ldots, c_m$ for some $0\le m_+\le m$. Correspondingly, let $\mathbf{L}_+:\mathbb{R}^d \ni x \mapsto (L_1x,\ldots, L_{m_+}x) \in \prod_{i=1}^{m_+} \mathbb{R}^{d_i}$. Finally let $s^+(\mathcal{Q})$ denote the number of positive eigenvalues of $\mathcal{Q}$. 
The main theorem in \cite{BW} states that if  a data $(\mathbf{L},\mathbf{c},\mathcal{Q})$ satisfies Barthe--Wolff's non-degeneracy condition 
    \begin{equation}\label{e:BW-Nondeg}
        \mathcal{Q}|_{{\rm Ker}\, \mathbf{L}_+}>0\;\;\;{\rm and}\;\;\; d\ge s^+(\mathcal{Q}) + \sum_{i=1}^{m_+} d_i, 
    \end{equation}
    then the best constant in \eqref{e:IBL}  is exhausted by centered Gaussians, that is 
        $$
        \inf_{f_1,\ldots, f_m\ge0:\; \int f_i =1} \Lambda(\mathbf{L},\mathbf{c},\mathcal{Q}; \mathbf{f})
        = 
        \inf_{A_1,\ldots,A_m>0} \Lambda(\mathbf{L},\mathbf{c},\mathcal{Q}; \gamma_{A_1},\ldots, \gamma_{A_m}).  
        $$
Clearly for each $s>0$, our inequality \eqref{e:HC-IBL} is an example of this inverse Brascamp--Lieb inequality with the data $c_i = c_i(s)$, $L_i(x_1,x_2) = x_i$, and $\mathcal{Q}=\mathcal{Q}_s$.
However, the data does not satisfy the non-degeneracy condition \eqref{e:BW-Nondeg} when $p_s,q_s$ are beyond the Nelson's time regime $\frac{q_s-1}{p_s-1} > e^{2s}$. 
Given Corollary \ref{Cor:InvBL}, it seems to be reasonable to expect the following statement even when a data $(\mathbf{L},\mathbf{c},\mathcal{Q})$ is degenerate: 
for any data $(\mathbf{L},\mathbf{c},\mathcal{Q})$, it holds that 
\begin{equation}\label{e:SymIBL}
\inf_{f_1,\ldots, f_m\ge0:\; \int f_i=1,\; {\rm even}} \Lambda(\mathbf{L},\mathbf{c},\mathcal{Q}; \mathbf{f})
        = 
        \inf_{A_1,\ldots,A_m>0} \Lambda(\mathbf{L},\mathbf{c},\mathcal{Q}; \gamma_{A_1},\ldots, \gamma_{A_m}).  
\end{equation}
The importance of such a generalization may be seen by its link to the conjecture of Kolesnikov--Werner \cite{KW}, concerning an extension of the Blaschke--Santal\'{o} inequality to many convex bodies. 
\begin{conjecture}[Kolesnikov--Werner \cite{KW}]\label{Conj:KW}
    Let $m\ge2$. If nonnegative even functions $f_i \in L^1(\mathbb{R}^n)$ satisfy 
    \begin{equation}\label{e:CondKW}
    \prod_{i=1}^{m} f_i(x_i) \le \exp\big(  -\frac1{m-1} \sum_{1\le i <j \le m} \langle x_i,x_j \rangle \big),\;\;\; x_1,\ldots, x_m \in \mathbb{R}^n,
    \end{equation}
    then 
    \begin{equation}\label{e:ConsKW}
    \prod_{i=1}^m \big( \int_{\mathbb{R}^n} f_i\, dx_i \big)
    \le 
    \big( \int_{\mathbb{R}^n} e^{ -\frac12|x|^2 }\, dx \big)^m = (2\pi)^\frac{mn}2.
    \end{equation}
\end{conjecture}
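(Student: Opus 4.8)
The plan is to derive Conjecture~\ref{Conj:KW} from the even inverse Brascamp--Lieb inequality \eqref{e:SymIBL}, following the route by which Theorem~\ref{t:MonoVP} yields the functional Blaschke--Santal\'o inequality. Set $d=mn$, let $L_i\colon\mathbb{R}^{mn}\to\mathbb{R}^n$ be the $i$-th coordinate projection, take all exponents equal to $c_i(s)=1/p_s$ with $p_s\downarrow0$, and choose self-adjoint $\mathcal{Q}_s$ on $\mathbb{R}^{mn}$ so that $p_s\mathcal{Q}_s$ converges to the matrix whose diagonal $n\times n$ blocks vanish and whose $(i,j)$-block for $i\neq j$ equals $\tfrac1{2\pi(m-1)}\mathrm{id}_{\mathbb{R}^n}$; then $e^{-\pi\langle x,p_s\mathcal{Q}_sx\rangle}\to K(x):=\exp\!\big(-\tfrac1{m-1}\sum_{i<j}\langle x_i,x_j\rangle\big)$. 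The $m$-linear analogue of the elementary argument proving Lemma~\ref{Prop:Obs1} gives, for continuous even $f_i$ with Gaussian decay,
\[
\lim_{s\downarrow0}\Big(\int_{\mathbb{R}^{mn}}e^{-\pi\langle x,\mathcal{Q}_sx\rangle}\prod_{i=1}^m f_i(x_i)^{1/p_s}\,dx\Big)^{p_s}
=\sup_{x_1,\dots,x_m}\frac{\prod_{i=1}^m f_i(x_i)}{K(x)},
\]
and this supremum is $\le1$ exactly when the constraint \eqref{e:CondKW} holds. Feeding this into \eqref{e:SymIBL} (written in the form \eqref{e:HC-IBL} with the above data), raising to the power $p_s$, and passing to the limit, yields $\prod_i\int f_i\le\limsup_{s\downarrow0}({\rm BL}^{(e)}_s)^{-p_s}$.

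It then remains to show that this $\limsup$ equals $(2\pi)^{mn/2}$. By the Gaussian-exhaustion half of \eqref{e:SymIBL} (the analogue of Corollary~\ref{Cor:InvBL}) and $\int\gamma_{A_i}\,dx_i=1$, one has $({\rm BL}^{(e)}_s)^{-p_s}=\sup_{A_1,\dots,A_m>0}\big(\int e^{-\pi\langle x,\mathcal{Q}_sx\rangle}\prod_i\gamma_{A_i}(x_i)^{1/p_s}\,dx\big)^{-p_s}$, and letting $s\downarrow0$ inside turns the bracket into $\inf_x K(x)\big/\prod_i\gamma_{A_i}(x_i)$. A direct quadratic-form computation, first carried out for scalar covariances $A_i=\beta_i\mathrm{id}$, shows that this infimum equals $\prod_i(2\pi\det A_i)^{1/2}$ when the form $\sum_i\tfrac12\langle x_i,A_i^{-1}x_i\rangle+\tfrac1{m-1}\sum_{i<j}\langle x_i,x_j\rangle$ is positive semidefinite, and vanishes otherwise; the supremum over the admissible $A_i$ is then precisely $(2\pi)^{mn/2}$, which is \eqref{e:ConsKW}. (For $m=2$ this recovers $v(\gamma)=(2\pi)^n$.)

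The whole problem is thereby reduced to \eqref{e:SymIBL} for this one degenerate datum, and for that I would run the heat-flow monotonisation of Section~\ref{Sec2} in the $m$-function setting: flow each input by its own Fokker--Planck evolution $f_{i,t}=P_t^*f_{i,0}$, transport $\prod_i\int f_{i,t}$ through the reverse-hypercontractivity/Laplace-transform identity into a functional of the type $Q_s(t)$ as in \eqref{e:EquivForm}, show $Q_s'(t)\ge0$, and then send $t\to\infty$ so that every $f_{i,t}$ becomes a centered Gaussian. As in Theorem~\ref{t:MonoHC}, after integration by parts the sign of $Q_s'(t)$ should reduce to the Poincar\'e--Brascamp--Lieb inequality (Theorem~\ref{t:BraLi}) together with the matrix Cram\'er--Rao inequality, applied to the even strictly log-concave density playing the role of $F_t^q$.

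The main obstacle is this last step. For $m=2$ the endpoint datum is linear invariant and, via \eqref{e:EquivForm}, the functional collapses to a single Ornstein--Uhlenbeck expression whose dual profile $F_t^q$ is visibly even and strictly log-concave (its Hessian is a covariance matrix); for $m\ge3$ the kernel $K$ couples all $m$ variables, no single semigroup encodes the functional, and one must identify the correct multilinear analogue of $F_t^q$, prove its log-concavity---probably only after a linear change of variables adapted to the block structure of $\mathcal{Q}_s$---and check that the time-derivative identity still closes with the Poincar\'e--Brascamp--Lieb plus Cram\'er--Rao mechanism. Subsidiary points are the uniform integrability and approximation estimates (the analogues of \eqref{e:DomiFunc} and \eqref{e:Changelimint}) now with $m$ inputs, and confirming that the Gaussian optimisers in \eqref{e:SymIBL} are genuinely attained as $s\downarrow0$. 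Since the Kolesnikov--Werner datum violates Barthe--Wolff's non-degeneracy condition \eqref{e:BW-Nondeg} as soon as $m\ge3$, the evenness hypothesis is essential and must be exploited throughout, exactly as in Theorem~\ref{t:SymmRevHC}.
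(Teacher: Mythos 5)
The statement you are asked about is stated in the paper as a \emph{conjecture} (due to Kolesnikov--Werner), and the paper does not prove it: it only observes, in the concluding remarks, that Conjecture~\ref{Conj:KW} would follow from the conjectural even inverse Brascamp--Lieb inequality \eqref{e:SymIBL} applied to the specific degenerate datum with kernel $\mathcal{Q}_s = -\kappa_{m,s}(\mathbbm{1}-{\rm id}_{\mathbb{R}^{mn}})$ and exponents $c_i(s)=\frac1{1-e^{-2s}}$, via the tropical limit of Lemma~\ref{Prop:Obs1}. Your first two paragraphs reproduce exactly this reduction (and your Gaussian computation of the would-be constant $(2\pi)^{mn/2}$ is consistent with the extremal case $f_i=e^{-|x_i|^2/2}$), so that part is sound and matches the paper's own observation.

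However, the proposal is not a proof. The decisive input, \eqref{e:SymIBL} for this $m$-linear degenerate datum, is itself only conjectural, and your sketch of a heat-flow proof of it does not close: as you yourself flag, for $m\ge3$ there is no analogue of the identity \eqref{e:EquivForm} collapsing the multilinear form into a single Ornstein--Uhlenbeck/Laplace-transform expression, so there is no identified functional $Q_s(t)$, no candidate for the even strictly log-concave profile playing the role of $F_t^q$, and hence no way to invoke the Poincar\'e--Brascamp--Lieb plus Cram\'er--Rao mechanism of Theorem~\ref{t:MonoHC}. Everything in the paper's machinery (Theorems~\ref{t:MonoVP}, \ref{t:SymmRevHC}, \ref{t:MonoHC}, Corollary~\ref{Cor:InvBL}) is genuinely bilinear, i.e.\ restricted to $m=2$; the passage to $m\ge3$ is precisely the open problem, not a routine extension. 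So the proposal reduces one open conjecture to another and leaves the essential step unproved; it should be presented as a strategy, not a proof.
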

To see the link to \eqref{e:SymIBL}, we take a specific data. 
Let $m\ge2$, $d_1,\cdots, d_m = n$, and $d = mn$. 
For each $s>0$, let  
$$
c_1 (s)= \cdots = c_m (s)= \frac1{1-e^{-2s}},\; L_i(x_1,\ldots, x_m) = x_i, \; 
\mathcal{Q}_s = - \kappa_{m,s}  
\big( \mathbbm{1} - {\rm id}_{\mathbb{R}^{mn}} \big),
$$
where $\kappa_{m,s}:=\frac{e^{-s}}{2\pi(m-1) (1-e^{-2s})}$, and $\mathbbm{1}$ denotes $mn\times mn$ matrix whose entries are all $1$. 
By following the argument in  Lemma  \ref{Prop:Obs1}, one may see that if the conjectural inverse Brascamp--Lieb inequality \eqref{e:SymIBL} could be true for such data, then it would yield the affirmative answer to Conjecture \ref{Conj:KW}.


\if0 
\section{Better Jump...}
One alternation of Lemma \ref{l:RemovePolar} could be the following:
\begin{lemma}\label{l:RemovePolar2}
	Let $\lambda \le 1 \le \Lambda$ and $B>0$ on $\mathbb{R}^n$. 
	Then for $\psi$ s.t. $\lambda \le \nabla^2 \psi \le \Lambda$, it holds that 
	\begin{align}\label{e:RemovePolar2}
	&\int_{\mathbb{R}^n} \big| Bx - \nabla \psi^*(x) \big|^2 e^{-\psi^*}\, dx \\
	&\le C(n,\frac{\lambda}{\Lambda})   \big( \lambda_{\rm max}(B)^2 \Lambda^2 +1 \big)^{ 1 - \frac{\lambda}{2\Lambda} } 
	\big( \int_{\mathbb{R}^n} \big| B\nabla \psi(x) - x \big|^2\, e^{-\psi(x)}\, dx \big)^{\frac{\lambda}{2\Lambda}} e^{(1+\frac{\lambda}{2\Lambda})\psi(0)}.\nonumber
	\end{align}

\end{lemma}

\begin{proof}
With the identityies $\psi^*(\nabla \psi(y)) = \langle y, \nabla \psi(y)\rangle - \psi(y)$ and $\nabla \psi^*(\nabla \psi(y)) = y $ in mind, we do the change of variables to obtain that 
\begin{align*}
&\int_{\mathbb{R}^n} \big| B x - \nabla \psi^*(x)  \big|^2 e^{-\psi^*(x)}\, dx\\ 
&= 
\int_{\mathbb{R}^n} \big| B \nabla\psi(y) - \nabla \psi^*(\nabla \psi(y))   \big|^2 e^{-\psi^*(\nabla\psi(y))} {\rm det}\, \nabla^2 \psi(y)\, dy \\
&= 
\int_{\mathbb{R}^n} \big| B \nabla\psi(y) - y   \big|^2 e^{\psi(y) - \langle y, \nabla \psi(y)\rangle} {\rm det}\, \nabla^2 \psi(y)\, dy \\
&\le 
\Lambda^n
\int_{\mathbb{R}^n} \big| B \nabla\psi(y) - y   \big|^2 e^{\psi(y) - \langle y, \nabla \psi(y)\rangle} \, dy.  
\end{align*}
We then introduce a parameter $\mu \in (0,1)$ that we will determine later and split the integrand as 
\begin{align*}
	&\int_{\mathbb{R}^n} \big| Bx - \nabla \psi^*(x) \big|^2 e^{-\psi^*}\, dx \\
	&\le 
	\Lambda^n 
	\int_{\mathbb{R}^n} \big( \big| B \nabla\psi - y   \big|^{2\mu} e^{-\mu \psi} \big) 
	\big( \big| B \nabla\psi - y   \big|^{2(1-\mu)}  e^{(1+\mu)\psi - 
	\langle y, \nabla \psi\rangle} \big) \, dy \\
	&\le 
	\Lambda^n 
	\big(\int_{\mathbb{R}^n}  \big| B \nabla\psi - y   \big|^{2} e^{- \psi}\, dy  \big)^{\mu} 
	\big(\int_{\mathbb{R}^n}  \big| B \nabla\psi - y   \big|^{2}  \big[ e^{(1+\mu)\psi - 
	\langle y, \nabla \psi\rangle} \big]^{\frac1{1-\mu}} \, dy \big)^{1-\mu}.
\end{align*}
We then focus on $(1+\mu)\psi - 
	\langle y, \nabla \psi\rangle$. This can be controlled as 
	\begin{align*}
		&(1+\mu)\psi(y) - 
	\langle y, \nabla \psi (y)\rangle - (1+\mu) \psi(0)\\
	&= 
	\int_0^1 \frac{d}{dt} \big( (1+\mu ) \psi(ty) - \langle ty, \nabla \psi(ty) \rangle \big)\, dt\\ 
	&= 
	\int_0^1 (1+\mu) \langle y, \nabla\psi(ty)\rangle - \langle y, \nabla \psi(ty)\rangle - \langle ty, ( \nabla^2 \psi(ty) )y\rangle\, dt \\
	&= \int_0^1 \mu \langle y, \nabla \psi(ty)\rangle - \langle ty, ( \nabla^2 \psi(ty) )y\rangle\, dt.  
	\end{align*}
We know from \eqref{e:Nablaf_t} that 
$$
\langle y, \nabla \psi (ty) \rangle \le |y||\nabla \psi(ty)| \le \Lambda t |y|^2,
$$
while $ \langle ty, ( \nabla^2 \psi(ty) )y\rangle \ge \lambda t|y|^2 $. Thus we obtain that 
\begin{align*}
		&(1+\mu)\psi(y) - 
	\langle y, \nabla \psi (y)\rangle - (1+\mu) \psi(0)\\
	&\le \int_0^1 (\mu \Lambda - \lambda) t|y|^2, dt 
	= \frac12( \mu\Lambda - \lambda )|y|^2. 
	\end{align*}
We then choose $\mu \in (0,1)$ so that $\mu\Lambda - \lambda <0$. For instance we may choose $\mu = \frac{\lambda}{2\Lambda}$. 
With this choice, we see that 
\begin{align*}
	&\int_{\mathbb{R}^n} \big| Bx - \nabla \psi^*(x) \big|^2 e^{-\psi^*}\, dx \\
	&\le 
	\Lambda^n 
	\big(\int_{\mathbb{R}^n}  \big| B \nabla\psi - y   \big|^{2} e^{- \psi}\, dy  \big)^{\frac{\lambda}{2\Lambda}}  
	\big(\int_{\mathbb{R}^n}  \big| B \nabla\psi - y   \big|^{2}   e^{ -\frac{\lambda\Lambda }{2( 2\Lambda - {\lambda})}|y|^2 }  \, dy \big)^{1-\frac{\lambda}{2\Lambda}} e^{(1+\frac{\lambda}{2\Lambda})\psi(0)}.
\end{align*}
It remains to estimate 
\begin{align*}
&\big(\int_{\mathbb{R}^n}  \big| B \nabla\psi - y   \big|^{2}   e^{ -\frac{\lambda\Lambda }{2( 2\Lambda - {\lambda})}|y|^2 }  \, dy \big)^{1-\frac{\lambda}{2\Lambda}}	\\
&\le 
\big( \lambda_{\rm max}(B)^2 \Lambda^2 +1 \big)^{ 1 - \frac{\lambda}{2\Lambda} } C(n,\frac{\lambda}{\Lambda}).   
\end{align*}

\end{proof}

Let us apply Lemma \ref{l:RemovePolar2} to Lemma \ref{l:BeforeTimeBack}:
\begin{claim}\label{cl:13Dec-1}
Let $\lambda^\circ \le \lambda^{-1}$. For any $\phi$ s.t. $\lambda \le \nabla^2 \phi \le \frac1{\lambda^\circ}$, it holds that 
$$
\delta_{\rm BS}(f_0,f_{t_0})
\ge C(n,\lambda\lambda^\circ) 
\big(\int_0^{t_0} \int_{\mathbb{R}^n} \big| (B_t^\circ)^{-1}x - \nabla \phi_t(x)  \big|^2 \frac{e^{- \phi_t(x)}}{m(e^{-\phi_t})} \, dx   \, dt \big)^{\frac{2}{\lambda\lambda^\circ}} t_0^{1 - \frac{2}{\lambda\lambda^\circ}}. 
$$
\end{claim}

\begin{proof}
Recall that Lemma \ref{l:BeforeTimeBack}  yields that 
$$
\delta_{\rm BS} (f_0,f_{t_0}) \ge c(\lambda \lambda^\circ)^2 \int_0^{t_0} \int_{\mathbb{R}^n} \big| (B_t^\circ)^{-1}   \nabla \phi_t^*(x) - x \big|^2 \frac{e^{- \phi_t^*(x)}}{m(e^{- \phi_t^*})}\, dxdt, 
$$
where $\phi_t:= -\log\, f_t$ and $B_t^\circ := \int_{\mathbb{R}^n} x\otimes x  \frac{e^{- \phi_t^*(x)}}{m(e^{- \phi_t^*})}\, dx$.
We then use Lemma \ref{l:RemovePolar2} with $\psi = \phi_t^*$ and $B = (B_t^\circ)^{-1}$. Note that $\lambda^\circ \le \nabla^2 \phi_t^* = \nabla^2 \psi \le \frac1{\lambda}$. Thus 
\begin{align*}
&\delta_{\rm BS} (f_0,f_{t_0}) \\
&\ge C(n,\lambda\lambda^\circ) e^{-(1+\frac2{\lambda\lambda^\circ})\phi_t^*(0)}\int_0^{t_0} \big( \int_{\mathbb{R}^n} \big| (B_t^\circ)^{-1}x - \nabla \phi_t(x)  \big|^2 e^{- \phi_t(x)} \, dx \big)^{\frac{2}{\lambda\lambda^\circ}} \frac1{m(e^{- \phi_t^*})} \, dt\\
&\ge 
C(n,\lambda\lambda^\circ) e^{- \frac2{\lambda\lambda^\circ}\phi_t^*(0)}\int_0^{t_0} \big( \int_{\mathbb{R}^n} \big| (B_t^\circ)^{-1}x - \nabla \phi_t(x)  \big|^2 e^{- \phi_t(x)} \, dx \big)^{\frac{2}{\lambda\lambda^\circ}}  \, dt \\
&= 
C(n,\lambda\lambda^\circ) e^{- \frac2{\lambda\lambda^\circ}\phi_t^*(0)} m(e^{-\phi_0})^{\frac2{\lambda\lambda^\circ}}\int_0^{t_0} \big( \int_{\mathbb{R}^n} \big| (B_t^\circ)^{-1}x - \nabla \phi_t(x)  \big|^2 \frac{e^{- \phi_t(x)}}{m(e^{-\phi_t})} \, dx \big)^{\frac{2}{\lambda\lambda^\circ}}  \, dt\\
&\ge 
C(n,\lambda\lambda^\circ) e^{- \frac2{\lambda\lambda^\circ}(\phi_t^*(0)+\phi_t(0))} \int_0^{t_0} \big( \int_{\mathbb{R}^n} \big| (B_t^\circ)^{-1}x - \nabla \phi_t(x)  \big|^2 \frac{e^{- \phi_t(x)}}{m(e^{-\phi_t})} \, dx \big)^{\frac{2}{\lambda\lambda^\circ}}  \, dt\\
&=
C(n,\lambda\lambda^\circ)  \int_0^{t_0} \big( \int_{\mathbb{R}^n} \big| (B_t^\circ)^{-1}x - \nabla \phi_t(x)  \big|^2 \frac{e^{- \phi_t(x)}}{m(e^{-\phi_t})} \, dx \big)^{\frac{2}{\lambda\lambda^\circ}}  \, dt, 
\end{align*}
where we used $m(e^{-\phi_t^*}) \le C(n,\lambda\lambda^\circ) e^{-\phi_t^*(0)}$, $m(e^{-\phi_t}) \ge C(n,\lambda\lambda^\circ) e^{-\phi_t(0)}$, and $\phi_t^*(0)+\phi_t(0)=0$. 
In view of $\frac{2}{\lambda\lambda^\circ}\ge1$, we use the H\"{o}lder inequality to conclude that 
\begin{align*}
	&\int_0^{t_0} \big( \int_{\mathbb{R}^n} \big| (B_t^\circ)^{-1}x - \nabla \phi_t(x)  \big|^2 \frac{e^{- \phi_t(x)}}{m(e^{-\phi_t})} \, dx \big)^{\frac{2}{\lambda\lambda^\circ}}  \, dt\\
	&\ge 
	\big(\int_0^{t_0} \int_{\mathbb{R}^n} \big| (B_t^\circ)^{-1}x - \nabla \phi_t(x)  \big|^2 \frac{e^{- \phi_t(x)}}{m(e^{-\phi_t})} \, dx   \, dt \big)^{\frac{2}{\lambda\lambda^\circ}} t_0^{1 - \frac{2}{\lambda\lambda^\circ}}. 
\end{align*}

\end{proof}

\begin{itembox}[1]{Question}
	It would be great if one could replace $(B_t^\circ)^{-1}$ by $B_t$ which is the covariance of $e^{-\phi_t}$: 
	\begin{equation}\label{e:Question12Dec}
	\delta_{\rm BS}(f_0,f_{t_0})
\ge C(n,\lambda\lambda^\circ) 
\big(\int_0^{t_0} \int_{\mathbb{R}^n} \big| B_tx - \nabla \phi_t(x)  \big|^2 \frac{e^{- \phi_t(x)}}{m(e^{-\phi_t})} \, dx   \, dt \big)^{\frac{2}{\lambda\lambda^\circ}} t_0^{1 - \frac{2}{\lambda\lambda^\circ}}?
	\end{equation}
	Why? Because, $e^{-\phi_t}$ is isotropic for all $t>0$ if when $e^{-\phi_0}$ is isotropic. So, we would conclude that for isotropic $e^{-\phi_0}$ s.t. $\lambda \le \nabla^2 \phi_0 \le \frac{1}{\lambda^\circ}$, 
	\begin{align*}
		\delta_{\rm BS}(f_0,f_{t_0})
&\ge C(n,\lambda\lambda^\circ) 
\big(\int_0^{t_0} \int_{\mathbb{R}^n} \big| x - \nabla \phi_t(x)  \big|^2 \frac{e^{- \phi_t(x)}}{m(e^{-\phi_t})} \, dx   \, dt \big)^{\frac{2}{\lambda\lambda^\circ}} t_0^{1 - \frac{2}{\lambda\lambda^\circ}}\\
&=
C(n,\lambda\lambda^\circ) 
\big(\int_0^{t_0} J( \frac{e^{-\phi_t}}{m(e^{-\phi_t})} ) \, dt \big)^{\frac{2}{\lambda\lambda^\circ}} t_0^{1 - \frac{2}{\lambda\lambda^\circ}}\\
&=
C(n,\lambda\lambda^\circ) 
\big( {\rm Ent}\, (\frac{e^{-\phi_{t_0}}}{m(e^{-\phi_{t_0}})} ) - {\rm Ent}\, (\frac{e^{-\phi_{0}}}{m(e^{-\phi_{0}})} ) \big)^{\frac{2}{\lambda\lambda^\circ}} t_0^{1 - \frac{2}{\lambda\lambda^\circ}}\\
&\ge
C(n,\lambda\lambda^\circ) (1-e^{-2t_0})^{\frac{2}{\lambda\lambda^\circ}}
\big( {\rm Ent}\, (\gamma ) - {\rm Ent}\, (\frac{e^{-\phi_{0}}}{m(e^{-\phi_{0}})} ) \big)^{\frac{2}{\lambda\lambda^\circ}} t_0^{1 - \frac{2}{\lambda\lambda^\circ}}.
	\end{align*}
\end{itembox}

\begin{claim}\label{cl:12Dec-2}
	When $n=1$, the answer is YES: \eqref{e:Question12Dec} holds true.
	Consequently, we have that 
	$$
	\delta_{\rm BS}(f_0,f_{t_0})\ge 
	C(n,\lambda\lambda^\circ) (1-e^{-2t_0})^{\frac{2}{\lambda\lambda^\circ}}
\big( {\rm Ent}\, (\gamma ) - {\rm Ent}\, (\frac{e^{-\phi_{0}}}{m(e^{-\phi_{0}})} ) \big)^{\frac{2}{\lambda\lambda^\circ}} t_0^{1 - \frac{2}{\lambda\lambda^\circ}},
	$$
	for any isotropic $e^{-\phi_0}$ s.t. $\lambda \le \phi_0'' \le \frac1{\lambda^\circ}$. 
\end{claim}

\begin{proof}
For each fixed $t\ge0$, we consider the minimum of a function 
$$
b\in (0,\infty) \mapsto  \int_{\mathbb{R}} \big| bx - \phi_t'(x)  \big|^2 \frac{e^{- \phi_t(x)}}{m(e^{-\phi_t})} \, dx.  
$$
This is just a quadratic function, so we have only to take its derivative and identify its zero. 
Since $e^{-\phi_t}$ is isotropic, from the direct calculation, one can see that 
$$
\frac{d}{db} \int_{\mathbb{R}} \big| bx - \phi_t'(x)  \big|^2 \frac{e^{- \phi_t(x)}}{m(e^{-\phi_t})} \, dx =0 \quad \Leftrightarrow \quad b=1.
$$ 
\end{proof}

\textcolor{red}{
Slicing conjecture is equivalent to ${\rm Ent}(\gamma) - {\rm Ent}(f) \le C n$ for any isotropic log-concave measure, where $C>0$ is an absolute constant. 
Hence if one would show 
$$
	\delta_{\rm BS}(f_0,\gamma)\ge 
	C \big( {\rm Ent}\, (\gamma ) - {\rm Ent}\, (\frac{e^{-\phi_{0}}}{m(e^{-\phi_{0}})} ) \big), 
$$
then we might conclude 
$$
{\rm Ent}\, (\gamma ) - {\rm Ent}\, (\frac{e^{-\phi_{0}}}{m(e^{-\phi_{0}})} \le C n
$$
since 
$$
\delta_{\rm BS}(f_0,\gamma) \le C n
$$
by the Bourgain--Milman inequality. 
}\fi 

\if0 
===============================
===============================

\section{Appendix}
Proof of \eqref{e:CompSupp}:
To see this, let us denote the compact support of $f_0$ by $E \subset \R^n$. 
Then we can easily justify 
$$
f_t(x) = P_t[\frac{f_0}{\gamma}](x) \gamma(x)
\le \max_{y \in E} \frac{f_0(y)}{\gamma(y)} 
\gamma(x)
$$
which is uniform in $t$. 
On the other hand, 
we can see that 
\begin{align*}
\frac{f_t(x)}{\gamma(x)}
&=
\int_{E} f_0(y) \gamma^{-1}(y) \gamma(\frac{y-e^{-t}x}{\sqrt{1-e^{-2t}}})\, \frac{dy}{(1-e^{-2t})^\frac{n}{2}}
\\
&\ge 
\int_{E} f_0(y) e^{\frac12 (1 - \frac{1+e^{-t}}{1-e^{-2t}})|y|^2 - \frac{e^{-t} + e^{-2t}}{2(1-e^{-2t})}|x|^2}\, \frac{dy}{(1-e^{-2t})^\frac{n}{2}}
\\
&\ge
(1-e^{-2t})^{-\frac{n}{2}} 
\min_{y \in E} e^{\frac12 (1 - \frac{1+e^{-t}}{1-e^{-2t}})|y|^2}
\int_{E} f_0(z)\, dz\,
e^{ - \frac{e^{-t} + e^{-2t}}{2(1-e^{-2t})}|x|^2}
\end{align*}
for all $x \in \R^n$, which implies one of the desired lower bounds by $0 < \int_{\R^n} f_0\, dz <+\infty$. 
Others are shown by the same argument.

Justification of the assumption that $f_0$ is bounded and compactly supported:
Let us give a detail of this claim. Suppose we could prove $Q_s(t_1)\le Q_s(t_2)$ for all bounded and compactly supported $f_0$. Take arbitrary nonzero $f_0$. Let $f_0^{(N)}:=f_0 {\mathbf 1}_{[-N,N]^n \cap \{f_0 \le N\}}$ which approximates $f_0$ in a way of $f_0^{(N)}\uparrow f_0$. 
We then claim that 
$$
\lim_{N\to \infty} \int_{\mathbb{R}^n} \big( \int_{\mathbb{R}^n} e^{\frac1p\langle x,z\rangle} \big(f_0^{(N)}\big)_t(z)^\frac1p\, dz \big)^q\, dx 
=
Q_s(t),\;\;\;t>0.
$$
Once we could prove this then we may conclude $Q_s(t_1)\le Q_s(t_2)$ for such $f_0$ since $f_0^{(N)}$ is bounded and compactly supported. 
But this identity can be checked by the same argument in the end of Proof Thm \ref{t:MonoHC}. 

Proof of \eqref{e:Change12Oct}:
At this stage we may appeal to the regularization of $P_{t_0}^*$ as follows. 
First we know that 
$$
\int_{\mathbb{R}^n} e^{\frac1p\langle x,z\rangle} \big( f_0^{(N)} \big)_{t_0}(z)^\frac1p\, dz 
\uparrow 
\int_{\mathbb{R}^n} e^{\frac1p\langle x,z\rangle} f_{t_0}(z)^\frac1p\, dz\;\;\; N\to\infty 
$$
since $\big( f_0^{(N)} \big)_{t_0} \uparrow f_{t_0}$. Namely $\big(F^{(N)}\big)_{t_0}^q \downarrow F_{t_0}^q$. 
On the other hand, we know that $\big(f_0^{(N_0)}\big)_{t_0} \ge c_{t_0} \gamma_{\beta_{t_0}}$ for $N_0 = 100$ from the above argument.
So for $N\ge 100$, 
$$
\big(F^{(N)}\big)_{t_0}(x)^q \le c_{t_0}^\frac{q}{p} \big(\int_{\mathbb{R}^n} e^{\frac1p\langle x,z\rangle} \gamma_{\beta_{t_0}}(z)^\frac1p\, dz \big)^q \in L^1(dx)
$$
which enables us to apply Lebesgues' convergence theorem to conclude that 
$$
Q_s(0)\le 
\liminf_{N\to\infty}
\int_{\mathbb{R}^n} 
\big(F^{(N)}\big)_{t_0}(x)^q\, dx 
=
\int_{\mathbb{R}^n} 
F_{t_0}(x)^q\, dx 
=
Q_s(t_0). 
$$

=========================================
=========================================
\fi

\section*{Acknowledgements}
This work was supported by JSPS Overseas Research Fellowship and JSPS Kakenhi grant numbers 21K13806, 23K03156, and 23H01080 (Nakamura), and JSPS Kakenhi grant number 22J10002 (Tsuji). 
Authors would like to thank to Neal Bez for sharing his insight which leads us to this work. 
Authors are also grateful to organizers of the Online Asymptotic Geometric Analysis Seminar.
Authors were also benefited from comments from Dario Cordero-Erausquin. He pointed out to them the discussion of Klartag and Tao in Tao's blog post, and this improves the presentation of this paper. 


\end{document}